\newcommand{\revised}[1]{{#1}}
\theoremstyle{plain}
\newtheorem{thm}{Theorem}[subsection]
\newtheorem{lem}[thm]{Lemma}
\newtheorem{prop}[thm]{Proposition}
\newtheorem{cor}[thm]{Corollary}
\newtheorem*{thm*}{Theorem}
\theoremstyle{definition}
\newtheorem{ex}[thm]{Example}
\theoremstyle{remark}
\newtheorem{rmk}[thm]{Remark}
\newcommand{\ds}[1]{\mathbf{#1}}
\providecommand{\kat}[1]{\text{\textbf{\textsl{#1}}}}
\newcommand{\Cat}{\kat{Cat}}
\newcommand{\infGrpd}{{\mathcal{S}}}
\newcommand{\infgrpd}{{\mathcal{F}}}
\newcommand{\simplexcat}{\boldsymbol \Delta}
\newcommand{\op}{^{\text{{\rm{op}}}}}
\DeclareMathOperator{\Fun}{Fun}
\DeclareMathOperator{\Map}{Map}
\DeclareMathOperator{\Hom}{Hom}
\renewcommand{\Im}{\operatorname{Im}}
\DeclareMathOperator{\Id}{Id}
\DeclareMathOperator{\id}{id}
\newcommand{\ora}{\overrightarrow}
\newcommand{\xra}{\xrightarrow}
\newcommand{\xla}{\xleftarrow}
\newcommand{\eq}{\simeq}
\newcommand{\ad}{\dashv}
\newcommand{\name}[1]{\ulcorner #1\urcorner}
\newcommand{\Q}{\mathbb{Q}}
\newcommand{\Dec}{\operatorname{Dec}}
\newcommand{\arxiv}[1]{\href{http://arxiv.org/abs/#1}{arxiv:#1}}
\author{Louis Carlier}
\title{Incidence bicomodules, Möbius inversion, and a Rota formula for infinity adjunctions}
\date{}
\newcommand{\address}{{%
  \bigskip
  \footnotesize
  \textsc{Departament de Matemàtiques \\
  \indent Universitat Autònoma de Barcelona \\
  \indent 08193 Bellaterra (Barcelona), Spain}\par\nopagebreak
  \textit{E-mail address}: 
  \href{mailto:louiscarlier@mat.uab.cat}{\nolinkurl{louiscarlier@mat.uab.cat}}
}}
\begin{document}
\maketitle

\begin{abstract}
    In the same way decomposition spaces, also known as unital 2-Segal spaces, have incidence (co)algebras, and certain relative decomposition spaces have incidence (co)modules, we identify the structures that have incidence bi(co)\-modules:~they are certain augmented double Segal spaces subject to some exactness conditions. We establish a Möbius inversion principle for (co)modules, and a Rota formula for certain more involved structures called Möbius bicomodule configurations.
   The most important instance of the latter notion arises as mapping cylinders of infinity adjunctions, or more generally of adjunctions between Möbius decomposition spaces, in the spirit of Rota's original formula.
\end{abstract}

\setcounter{tocdepth}{1}
\renewcommand{\baselinestretch}{0.5}\normalsize
\tableofcontents
\renewcommand{\baselinestretch}{1.0}\normalsize

\phantomsection
\addcontentsline{toc}{section}{Introduction}
\section*{Introduction}
\label{sec:intro}

The theory of Möbius categories, developed by Leroux~\cite{Leroux76} generalises the theory for locally finite posets~\cite{Rota} and Cartier--Foata finite-decomposition monoids~\cite{CF}, which admit  incidence (co)algebras and a Möbius inversion principle. 
It has recently been generalised to $\infty$-categories and decomposition spaces by Gálvez-Carrillo, Kock, and Tonks~\cite{GKT1, GKT2}.
\revised{(Decomposition spaces are the same thing as the unital $2$-Segal spaces of Dyckerhoff and Kapranov~\cite{DK}.)
}

\revised{
An important tool for computing the Möbius function in the incidence algebra of a locally finite poset is the classical formula of Rota~\cite[Theorem 1]{Rota} which compares the Möbius functions across a Galois connection.
Rota's original work already gave many applications of this formula, and it also features prominently in standard text books such as Aigner~\cite{Aigner} and Stanley~\cite{Stanley1}.
The idea that one should not just work with posets individually, but rather exploit relationships between posets is of course a very modern idea, appealing to any mathematician with a categorical bias.
}

The original goal of the present work, thought to be a routine exercise, was to generalise this formula to $\infty$-adjunctions. It turned out a lot of machinery was required to do this in a satisfactory way, and developing this machinery ended up as a substantial contribution, warranting the present paper: they are general constructions in the theory of decomposition spaces/2-Segal spaces, concerning bicomodules, which are of interest not just in combinatorics but also in representation theory, in connection with Hall algebras.

Before stating and proving this formula for categories, let us recall some definitions.

\subsection*{Incidence coalgebras}

Given a small category $X$, write $X_0$ for its set of objects and $X_1$ for its set of arrows.
Let $\Q_{X_1}$ be the free vector space on $X_1$.
We say a category $X$ is \emph{locally finite} if each morphism $f: x \to z$ in $X$ admits only finitely many two-step factorisations $x \xra{g} y \xra{h} z$.
This condition guarantees that the comultiplication on $\Q_{X_1}$, given by

\begin{align*}
    \Delta:  \Q_{X_1} & \to  \Q_{X_1} \otimes \Q_{X_1} \\
            f & \mapsto \sum_{h g = f} g \otimes h
\end{align*}
is well defined.
The counit $\delta: \Q_{X_1} \to \Q$ is given
by $\delta(\id_x) = 1$, and $\delta(f) = 0$ else. 

The \emph{incidence algebra} $\mathcal{I}_X$ is the linear dual, ($\textrm{Lin}(\Q_{X_1},\Q),\ast,\delta)$ with the convolution product:
\[
(\alpha \ast \beta)(f)
=
\sum_{h g = f}
\alpha(g) \, \beta(h),
\]
where $\alpha, \beta \in \mathcal{I}_X$ and $f \in \Q_{X_1}$.

The \emph{zeta function} $\zeta_X: \Q_{X_1} \to \Q$ is defined by
$\zeta_X(f) = 1$ for all $f \in \Q_{X_1}$. 

Define $\Phi_{\text{even}}: \Q_{X_1} \to \Q$ to be the number of even-length factorisations of a morphism, without identities, and $\Phi_{\text{odd}}: \Q_{X_1} \to \Q$ to be the number of odd-length factorisations, without identities.
A category is \emph{Möbius}~\cite{Leroux76} if it is locally finite and $\Phi_{\text{even}}$ and $\Phi_{\text{odd}}$ are finite.

\begin{thm*}[Content, Lemay, Leroux \cite{CLL}]
    If $X$ is a Möbius category then the zeta function is invertible, and the inverse, called the Möbius function, is given by $\mu = \Phi_{\operatorname{even}} - \Phi_{\operatorname{odd}}$.
\end{thm*}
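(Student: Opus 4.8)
The plan is to realise $\mu$ as the value of the geometric-type series $\sum_{k\ge 0}(-1)^k\eta^{\ast k}$ in the incidence algebra $\mathcal I_X$, where $\eta := \zeta_X - \delta$, and then to check that this series is well defined and is a two-sided convolution inverse of $\zeta_X = \delta + \eta$. First I would record the ambient algebra: the convolution product on $\mathcal I_X$ is associative, being dual to the coassociative comultiplication $\Delta$, and $\delta$ is its unit, since $(\delta\ast\alpha)(f)=\sum_{hg=f}\delta(g)\,\alpha(h)=\alpha(f)=(\alpha\ast\delta)(f)$ for every $f\in X_1$.

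Next I would unwind the convolution powers of $\eta$. Since $\eta(f)=1$ when $f$ is not an identity and $\eta(f)=0$ when it is, an induction on $k$ using the formula for $\ast$ shows that each $\eta^{\ast k}$ is a well-defined element of $\mathcal I_X$ (by local finiteness) and that $\eta^{\ast k}(f)$ is the number of factorisations $f=h_k\cdots h_1$ of $f$ into $k$ non-identity morphisms; in particular $\eta^{\ast 0}=\delta$. Grouping by parity, this identifies the counting functions of the statement:
\[
\Phi_{\text{even}} \;=\; \sum_{k\ \mathrm{even}}\eta^{\ast k},
\qquad
\Phi_{\text{odd}} \;=\; \sum_{k\ \mathrm{odd}}\eta^{\ast k} .
\]
This is the one point where the full Möbius hypothesis is used: finiteness of $\Phi_{\text{even}}$ and $\Phi_{\text{odd}}$ is exactly what guarantees that, for each fixed $f$, only finitely many $k$ contribute, so that these sums — and hence $\mu := \Phi_{\text{even}}-\Phi_{\text{odd}} = \sum_{k\ge 0}(-1)^k\eta^{\ast k}$ — define genuine linear functionals $\Q_{X_1}\to\Q$.

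Finally I would check invertibility by a telescoping computation. Fix $f\in X_1$ and choose $N$ with $\eta^{\ast k}(f)=0$ for all $k>N$, using the observation above. From $\zeta_X=\delta+\eta$, unitality of $\delta$, associativity, and the fact that every sum below is finite at the argument $f$, one gets
\[
(\zeta_X\ast\mu)(f)
= \sum_{k\ge 0}(-1)^k\bigl(\zeta_X\ast\eta^{\ast k}\bigr)(f)
= \sum_{k\ge 0}(-1)^k\bigl(\eta^{\ast k}+\eta^{\ast(k+1)}\bigr)(f)
= \eta^{\ast 0}(f) = \delta(f),
\]
and the same computation on the other side gives $(\mu\ast\zeta_X)(f)=\delta(f)$. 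Hence $\zeta_X$ is invertible in $\mathcal I_X$ with inverse $\mu=\Phi_{\text{even}}-\Phi_{\text{odd}}$. I expect the only real obstacle to be the convergence bookkeeping of the second step — verifying that the infinite convolution series makes sense pointwise and may be rearranged and multiplied term by term — which is precisely what the Möbius finiteness conditions are designed to supply; the rest is formal manipulation in the incidence algebra.
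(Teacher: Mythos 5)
Your proof is correct. Note that the paper does not actually prove this statement: it is quoted from Content--Lemay--Leroux \cite{CLL} as background, so there is no in-paper argument to compare against. What you have written is the classical geometric-series proof: identify $\eta^{\ast k}$ with the counting function for length-$k$ identity-free factorisations (local finiteness making each convolution power well defined pointwise), observe that the Möbius hypothesis forces $\eta^{\ast k}(f)=0$ for $k$ large so that $\mu=\sum_k(-1)^k\eta^{\ast k}$ is a genuine functional, and telescope. All the convergence bookkeeping you flag is handled correctly, including the point that $\eta^{\ast 0}=\delta$ accounts for the empty factorisation of identities. It is worth noting that the parity-split form $\zeta\ast\Phi_{\mathrm{even}}=\delta+\zeta\ast\Phi_{\mathrm{odd}}$, which is equivalent to your telescoping identity before any signs are introduced, is exactly the shape the paper later works with at the objective level (Theorem~\ref{mobdecomp} and Theorems~\ref{thm:Mobinversion}--\ref{thm:Mobinversionleft}), where subtraction is unavailable and the cancellation must be exhibited as an explicit equivalence of $\infty$-groupoids rather than as a signed sum.
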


\revised{
Examples of Möbius categories are locally finite posets and monoids with the finite-decompositions property, and this theorem generalises similar theorems for these more specialised settings.
}


\subsection*{Rota formula for categories}
\label{sub:rotacat}

A classical formula due to Rota~\cite{Rota} compares the Möbius functions of two posets related by a \emph{Galois connection}.
The following generalisation of Rota's formula to Möbius categories is both natural and straightforward (but seems not to have been made before). 

\begin{thm*}[Rota formula for Möbius categories]\label{thm:Rotacat}
Let $X$ and $Y$ be Möbius categories, and let $F:X \rightleftarrows Y:G$ be an adjunction, $F \ad G$. 
Then for all $x \in X$, $y \in Y$,
\[
\sum_{\mathclap{\substack{x' \in X \\ f: x \to x' \\ F x' = y}}} \mu_X(f)
=
\sum_{\mathclap{\substack{y' \in Y\\ g:y' \to y \\ G y' = x}}} \mu_Y(g).
\]
\end{thm*}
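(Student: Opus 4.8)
The plan is to carry Rota's original argument \cite{Rota} over to the categorical setting, the only genuinely new ingredient being bookkeeping with hom-sets where the poset case used only $0$'s and $1$'s. Fix $x\in X$ and $y\in Y$, and evaluate $\sum_{a\colon x\to Gy}\delta_X(a)$, the sum of $\delta_X$ over all arrows of $X$ from $x$ to $Gy$: only an identity can contribute, so this is $1$ if $Gy=x$ and $0$ otherwise, a number I will write $\langle Gy=x\rangle$. On the other hand, by the theorem of Content--Lemay--Leroux recalled above $\delta_X=\mu_X\ast\zeta_X$, and expanding this convolution over two-step factorisations $a=c\,b$ with $b\colon x\to x'$ and $c\colon x'\to Gy$, then reindexing the resulting double sum by the composable pair $(b,c)$, gives
\[
\langle Gy=x\rangle \;=\; \sum_{x'\in X}\Big(\sum_{b\colon x\to x'}\mu_X(b)\Big)\,\#\Hom_X(x',Gy).
\]
Applying the adjunction bijection $\Hom_X(x',Gy)\cong\Hom_Y(Fx',y)$ and grouping the outer sum by the object $y':=Fx'$ rewrites the right-hand side as $\sum_{y'\in Y} L(x,y')\,\#\Hom_Y(y',y)$, where $L(x,y')$ denotes the left-hand side of the theorem with $y$ replaced by $y'$. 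Thus $\langle Gy=x\rangle=\sum_{y'\in Y} L(x,y')\,\#\Hom_Y(y',y)$.

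The second step is to invert this last identity over the objects of $Y$. I would record that the two $Y_0\times Y_0$-indexed families $H(y',y):=\#\Hom_Y(y',y)$ and $M(y',y):=\sum_{g\colon y'\to y}\mu_Y(g)$ are mutually inverse ``matrices'': both are obtained by summing $\zeta_Y$, resp.\ $\mu_Y$, over each hom-set, i.e.\ by pushing forward along $(\operatorname{dom},\operatorname{cod})\colon Y_1\to Y_0\times Y_0$, and pushing forward the identities $\mu_Y\ast\zeta_Y=\delta_Y=\zeta_Y\ast\mu_Y$ gives $MH=I=HM$. Multiplying the displayed identity on the right by $M$ then yields
\[
L(x,y) \;=\; \sum_{y'\in Y}\langle Gy'=x\rangle\,M(y',y) \;=\; \sum_{\substack{y'\in Y,\ g\colon y'\to y\\ Gy'=x}}\mu_Y(g),
\]
which is precisely the right-hand side of the theorem.

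The algebra above is routine; the real obstacle is that it must be \emph{justified}, since hom-sets in a Möbius category need not be finite (finite-decomposition monoids already show this), so a priori neither the sums defining $L$ and $M$ nor the reindexings and ``matrix'' manipulations are legitimate. The crux is therefore convergence. The way I would secure it is to show that the mapping cylinder of the adjunction $F\ad G$ is a Möbius bicomodule configuration in the sense developed in this paper --- this is exactly the finiteness making every sum above a locally finite one --- so that the identity becomes a special case of the Rota formula for such configurations, the computation sketched here being its combinatorial skeleton. Alternatively one could try to prove directly that for Möbius $X$ and $Y$ the partial sums $\sum_{b\colon x\to x'}\mu_X(b)$, resp.\ $\sum_{g\colon y'\to y}\mu_Y(g)$, are supported on only finitely many $x'$ with $Fx'=y$, resp.\ finitely many $y'$ with $Gy'=x$; establishing that finiteness statement, not the ensuing manipulation, is where the work lies.
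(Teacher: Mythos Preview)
Your argument is essentially the same as the paper's: both are Rota's original computation transplanted to categories, pivoting on the adjunction bijection $\Hom_X(x',Gy)\cong\Hom_Y(Fx',y)$ and using $\mu\ast\zeta=\delta$ once on each side. The only organisational difference is that you package the second half as a matrix inversion $L=\langle G-=x\rangle\cdot M$ with $M=H^{-1}$, whereas the paper writes the whole thing as a single chain of seven equalities, inserting $\delta_Y=\zeta_Y\ast\mu_Y$ at the start and collapsing $\mu_X\ast\zeta_X=\delta_X$ at the end; unwinding your matrix step reproduces exactly their steps (2)--(7).

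On the convergence issue you raise: you are right that hom-sets need not be finite, so quantities like $\#\Hom_Y(y',y)$ and the matrix products are not a priori defined, and this is indeed where the work lies. The paper's elementary proof glosses over this entirely---it is presented as a formal calculation, with the remark that ``the reader is not expected to read'' it---and the rigorous justification is deferred to the bicomodule machinery (Theorems~\ref{cardmobform} and~\ref{rotaformulabicomod}, instantiated via Theorem~\ref{runexRota} for adjunctions). So your proposed resolution, routing through the Möbius bicomodule configuration of the mapping cylinder, is precisely what the paper does. Your write-up is in fact more honest than the paper's elementary proof about where the genuine content is.
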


The reader is not expected to read the following elementary proof, but only notice that it looks like an associativity formula for a convolution product, except that the arrows live in different categories. 

\begin{proof}[Proof of the Rota formula]
\begin{align*}
\smashoperator{\sum_{\substack{x' \in X \\ f: x \to x' \\ F x' = y}}} \mu_X(x \xra{f} x')
    &\stackrel{(1)}{=} \sum_{\substack{x' \in X \\ f: x \to x' \\ h: F x' \to y}} \mu_X(x \xra{f} x') \delta_Y(F x' \xra{h} y) \\
    &\stackrel{(2)}{=} \sum_{\substack{x' \in X \\ f: x \to x' \\ h: F x' \to y}} \mu_X(x \xra{f} x') \left( \sum_{\substack{y' \in Y \\ g:y' \to y \\ h': F x' \to y' \\\textrm{s.t. } h = gh'}} \zeta_Y(F x' \xra{h'} y') \mu_Y(y' \xra{g} y) \right) \\
    &\stackrel{(3)}{=} \sum_{\substack{x' \in X, y' \in Y \\ f: x \to x' \\ h: F x' \to y\\ g:y' \to y \\ h': F x' \to y' \\\textrm{s.t. } h = gh'}} \mu_X(x \xra{f} x') \zeta_Y(F x' \xra{h'} y') \mu_Y(y' \xra{g} y) \\
    &\stackrel{(4)}{=} \sum_{\substack{x' \in X, y' \in Y \\ g: y' \to y \\ k: x \to G y'\\ f:x \to x' \\ k': x' \to G y' \\\textrm{s.t. } k = k'f}} \mu_X(x \xra{f} x') \zeta_X(x' \xra{k'} G y') \mu_Y(y' \xra{g} y) \qquad \text{ by adjunction} \\
    &\stackrel{(5)}{=} \sum_{\substack{y' \in Y \\ g: y' \to y \\ k: x \to G y'}} \left( \sum_{\substack{x' \in X \\f:x \to x' \\ k': x' \to G y' \\\textrm{s.t. } k = k'f}} \mu_X(x \xra{f} x') \zeta_X(x' \xra{k'} G y') \right) \mu_Y(y' \xra{g} y) \\
    &\stackrel{(6)}{=} \sum_{\substack{y' \in Y \\ g: y' \to y \\ k: x \to G y'}} \delta_X(x \xra{k} G y') \mu_Y(y' \xra{g} y) 
    \stackrel{(7)}{=} \sum_{\substack{y' \in Y\\ g:y' \to y \\ G y' = x}} \mu_Y(y' \xra{g} y).
\end{align*}
\end{proof}
In the main result of the present paper, Theorem~\ref{rotaformulabicomod}, we write this formula as
\[
    \mu_{X} \star_l \delta_Y = \delta_{X} \star_r \mu_Y,
\]
with the following more conceptual proof
\[
      \mu_X \star_l \delta_Y 
        \stackrel{(2)}{=}  \mu_X \star_l (\zeta \star_r \mu_Y) 
        \stackrel{(3-4-5)}{=} (\mu_X \star_l \zeta) \star_r \mu_Y 
        \stackrel{(6)}{=} \delta_{X} \star_r \mu_Y,
\]
referring to certain left and right convolution actions.
\revised{
The important insight here, which is due to Aguiar and Ferrer~\cite{AF}, is that the `mixed arrows' which appear in the middle factors (those of the form $Fx\to y$, which in the crucial step of the proof are reinterpreted as $x\to Gy$ by adjunction) belong to a \emph{bimodule}: they are acted upon from the left by
arrows in the category $X$ and from the right by arrows in $Y$ (see Example~\ref{exFG} for an explicit description). The long complicated sums in the proof are thus condensed into convolution actions from the left and right, denoted $\star_l$ and $\star_r$. Aguiar and Ferrer~\cite{AF} established a bimodule proof of the Rota formula in the setting of posets.
}

Two ingredients are necessary to make sense of the pleasing convolution proof above: one is to exhibit the data necessary to induce bicomodules and establish that adjunctions constitute an example. \revised{This already accounts for equalities (3-4-5) in the proof.}
The other is to establish a Möbius inversion principle for (co)modules (a notion which has not previously been considered in the literature, to the knowledge of the author), \revised{to account for the equalities (2) and (6).}

\revised{
In fact, as the notions and arguments become increasingly abstract and conceptual, it is natural to ask for further generalisation. In this work we take three considerable abstraction steps (beyond passing from posets to categories, which is already a fruitful step). First, we pass from categories and adjunctions to $\infty$-categories and $\infty$-adjunctions. Any $\infty$-adjunction defines a bicomodule in our sense. This step in itself is not so easy to justify from the viewpoint of combinatorics, but the homotopy content inherent in $\infty$-categories is important since already classical combinatorial structures have symmetries, and these can be handled more conveniently with groupoids than with sets, as advocated by Baez--Dolan~\cite{BD}, Gálvez-Kock-Tonks~\cite{GKT:combinatorics} and others. (This aspect will not be of importance in the present contribution, though.) Second, we pass from $\infty$-categories to decomposition spaces (also called $2$-Segal spaces) and introduce a notion of adjunction for them. This step has an important     combinatorial motivation, because many combinatorial coalgebras admit a natural realisation as incidence coalgebras of decomposition spaces which are not posets or categories.
An important example is the coalgebra of all finite posets, which will serve as a running example in this paper. The final abstraction step consists in noticing that the abstract Rota formula works equally well for certain bicomodules which do {\em not} come from adjunctions or $\infty$-adjunctions. In fact the running example chosen to illustrate the theory is of this type: it is a certain bicomodule interpolating between the decomposition space of finite sets and the decomposition space of finite posets. The outcome is the formula $\mu(P) = (-1)^n$ for the Möbius function of a poset with $n$ elements. This formula is well known (see for example \cite{ABS}) but its derivation via a Rota formula is new and interesting, since the coalgebra of finite posets is not the incidence coalgebra of a locally finite poset or Möbius category.

Finally, a word should be said about the objective approach, an important aspect of the decomposition-space viewpoint on incidence algebras and Möbius inversion. The point here is to lift combinatorial identities to bijections of sets, and more generally equivalences of $\infty$-groupoids. Specifically, the Möbius-inversion formula, which classically is an equation in vector spaces, is lifted to an equivalence between $\infty$-groupoids defining spans, whose homotopy cardinality are the linear maps in question (the introductions of \cite{GKT1} and \cite{GKT:combinatorics} contain further motivation). The present work inscribes itself in this tradition, seeking to find objective structures for bicomodules and the Rota formula. However, it must be admitted that the objective level is not fully achieved in this paper. On one hand, the Möbius formula obtained for comodules is not directly realised as the homotopy cardinality of an equivalence: it has been found necessary here to take homotopy cardinality a little bit earlier in the constructions, so that the final arguments take place at the vector space level. This is due to the increased complexity compared to the plain Möbius-inversion formula of \cite{GKT2}, where an even-odd splitting could be found for the single decomposition space involved.
In the present situation, \emph{two} decomposition spaces are involved, and the even-odd splitting at the objective level could not be found. Furthermore, the objective analogue of bicomodules given here is not fully satisfactory from the homotopy viewpoint. While it is shown to induce bicomodules up to homotopy, the \emph{coherence} of this up-to-homotopy structure is not established in this work, and would seem to require considerable further efforts, in the line of coherence proofs given by \cite{GKT1} and \cite{Penney}. Further discussion is included in the main text. The justification for not establishing coherence in the present contribution is that it is not necessary for the sake of taking cardinality, as required anyway in the final constructions for the Rota formula established. 
}

Although the motivation and the statement of the theorem belongs to combinatorics, the setting for this work and the tools employed are from simplicial homotopy theory, in the style of \cite{GKT1, GKT2}, working with $\infty$-groupoids, homotopy pullbacks, mapping spaces, fibrations and fibre sequences. One technical novelty compared to \cite{GKT1} and \cite{GKT2} is that we exploit general simplicial maps between decomposition spaces, not just culf ones, and introduce the notion of adjunction between decomposition spaces. Another is that the notion of mapping cylinder is exploited systematically: on one hand locally to model the shapes needed to index the various configurations, and on the other hand globally, as infinity mapping cylinders.

\subsection*{Outline of the paper}
\label{sub:outline}

We begin in Section~\ref{sec:preliminaries} with a brief review of needed notions from the theory of $\infty$-categories, with an emphasis on decomposition spaces.

In Section~\ref{sec:bicomodules}, following Walde~\cite{Walde} and Young~\cite{Young}, we first explain how to obtain a comodule in the context of decomposition spaces.

\medskip

\noindent
{\bf Proposition~\ref{comodule}.} {\em
If $f: C \to X$ is a culf map between two simplicial $\infty$-groupoids such that $C$ is Segal and $X$ is a decomposition space, then the span
\[
    C_{0}  \xleftarrow{d_1} C_{1} 
    \xrightarrow{(f_{1},d_0)} X_{1} \times C_{0}
\]
induces on the slice $\infty$-category $\infGrpd_{/C_0}$ the structure of a left $\infGrpd_{/X_1}$-comodule (at the $\pi_0$ level), and the span 
\[
    C_{0}  \xleftarrow{d_0} C_{1} 
    \xrightarrow{(d_1,f_{1})} C_{0} \times X_{1}.
\]
induces on $\infGrpd_{/C_0}$ the structure of a right $\infGrpd_{/X_1}$-comodule (at the $\pi_0$ level).
}
\medskip

The data needed to obtain a comodule is called a \emph{comodule configuration}.
\revised{In order to obtain a bicomodule structure, we first need an augmented bisimplicial $\infty$-groupoid Segal in each direction. We furthermore require this bisimplicial $\infty$-groupoid to be stable}, see Section~\ref{sub:stability}.
This stability condition is a pullback condition on certain squares, and is a $\infty$-categorical reformulation of the notion of Bergner--Osorno--Ozornova--Rovelli--Scheimbauer~\cite{BOORS}, suitable for $\infty$-groupoids.

\medskip
\noindent
{\bf Theorem~\ref{thm:bicomodule}.} {\em
Let $B$ be an augmented stable double Segal space, and such that the augmentation maps are culf.
Suppose moreover $X:=B_{\bullet,-1}$ and $Y:=B_{-1,\bullet}$ are decomposition spaces.
Then the spans
\[B_{0,0}  \xleftarrow{e_1} B_{1,0} 
    \xrightarrow{(u,e_0)} X_{1} \times B_{0,0}
\]
and
\[B_{0,0}  \xleftarrow{d_0} B_{0,1} 
    \xrightarrow{(d_1,v)} B_{0,0} \times Y_{1}
\]
induce on $\infGrpd_{/B_{0,0}}$ the structure of a bicomodule over $\infGrpd_{/X_1}$ and $\infGrpd_{/Y_1}$ (at the $\pi_0$ level).

}
\medskip

An augmented bisimplicial $\infty$-groupoid satisfying the conditions of the theorem is called a \emph{bicomodule configuration}.
\revised{
We describe as an example the bicomodule configuration of layered sets and posets, treated in more details in \cite{Ca:mdrs}.
}

In Section~\ref{sec:adj}, \revised{we introduce the notion of correspondence of decomposition spaces: it is a decomposition space $\mathcal{M}$ with a map $\mathcal{M} \to \Delta^1$.}
We show that any correspondence of decomposition spaces gives rise to a bicomodule configuration.
We then introduce the notion of \emph{cartesian} and \emph{cocartesian fibration} of decomposition spaces, adapting a homotopy-invariant definition for $\infty$-categories which can be found in \cite{AF}.
\revised{They give rise to left and right pointed comodule configurations.}
We define an \emph{adjunction} between decomposition spaces $X$ and $Y$ to be a simplicial map between decomposition spaces $p: \mathcal{M} \to \Delta^1$ which is both a cartesian and a cocartesian fibration, equipped with equivalences $X \eq \mathcal{M}_{\{0\}}$ and $Y \eq \mathcal{M}_{\{1\}}$.
\revised{Adjunctions give rise to bicomodule configurations with two pointings.}

In Section~\ref{sec:mobcomod}, we define left and right convolution actions $\star_l$ and $\star_r$ dual to the comodule structures.
The following is a consequence of Theorem~\ref{thm:bicomodule}.

\medskip
\noindent
{\bf Corollary~\ref{associativityconvol}.} {\em
Given a bicomodule configuration, the left and right convolutions satisfy the associative law
\[
    \alpha \star_l (\theta \star_r \beta)
     \eq (\alpha \star_l \theta) \star_r \beta.
\]

}

We then establish in Section~\ref{sub:phifunctors} a Möbius inversion principle for complete comodules. 
Let $C \to Y$ be a right comodule configuration such that the simplicial $\infty$-groupoid $C$ is augmented and with 
new bottom degeneracies $s_{-1}: C_{n-1} \to C_{n}$ which are sections to $d_0$. 
We say it is \emph{complete} (Section~\ref{sub:complete}) if the sections $s_{-1}$ are monomorphisms.

For a complete decomposition space $Y$, let $\ora{Y_n}$ denote the full subgroupoid of simplices with all principal edges nondegenerate. The spans
$Y_1 \xla{d_1^{n-1}} \ora{Y_n} \rightarrow 1$
define linear functors, the \emph{Phi functors}
$
    \Phi_n: \infGrpd_{/Y_1} \to \infGrpd
$.
We also put $\displaystyle \Phi_{\text{even}} := \sum_{n \text{ even}} \Phi_{n}$, and $\displaystyle \Phi_{\text{odd}} := \sum_{n \text{ odd}} \Phi_{n}$.

The zeta functor 
$\zeta^{C}: \infGrpd_{/C_{0}} \to \infGrpd$
is the linear functor defined by the span $C_{0} \xleftarrow{=} C_{0} \xrightarrow{} 1$,
and
$\delta^{R}: \infGrpd_{/C_{0}} \to \infGrpd$
is the linear functor given by the span
$C_{0} \xleftarrow{s_{-1}} C_{-1} \xrightarrow{} 1$.
We define $\delta^{L}$ similarly for left comodule configurations.

\medskip
\noindent
{\bf Theorem~\ref{thm:Mobinversion} and \ref{thm:Mobinversionleft}.} {\em
    Given $C \to Y$ a complete right comodule configuration and $D \to X$ a complete left comodule configuration, then
    \[\zeta^{C} \star_r \Phi_{\text{even}}^Y \eq \delta^{R} + \zeta^{C} \star_r \Phi_{\text{odd}}^Y,\]
    \[\Phi_{\text{even}}^X \star_l \zeta^{D}  \eq \delta^{L} +  \Phi_{\text{odd}}^X \star_l \zeta^{D}.\]

}

In Section~\ref{sub:Mobbicomod}, we establish a Möbius inversion principle at the algebraic level. To this end, we need to impose some finiteness conditions in order to take homotopy cardinality.
Define the \emph{Möbius functions} as the homotopy cardinalities $|\mu^Y| := |\Phi^Y_{\text{even}}| - |\Phi^Y_{\text{odd}}|$
and $|\mu^X| := |\Phi^X_{\text{even}}| - |\Phi^X_{\text{odd}}|$.

\medskip
\noindent
{\bf Theorem~\ref{cardmobform}.} {\em
Given $C \to Y$ a  right Möbius comodule configuration and $D \to X$ a left Möbius comodule configuration,
    \[
    |\zeta^{C}| \star_r |\mu^Y| = |\delta^{R}|, \qquad  |\mu^X| \star_l |\zeta^{D}|  = |\delta^{L}|.\]

}

Finally we can extend the Rota formula to bicomodules with Möbius inversion for both comodules, called \emph{Möbius bicomodule configurations}. Combining Proposition~\ref{associativityconvol} and Theorem~\ref{cardmobform}, we obtain the main theorem of the present paper:

\medskip
\noindent
{\bf Theorem~\ref{rotaformulabicomod}.} {\em
Given a Möbius bicomodule configuration $B$ with $X := B_{\bullet, -1}$ and $Y:=B_{-1, \bullet}$, we have
\[
    |\mu^{X}| \star_l |\delta^R| = |\delta^{L}| \star_r |\mu^Y|,
\]
where $\delta^R$ is the linear functor given by the span
\[
    B_{0,0} \xleftarrow{} X_{0} \xrightarrow{} 1,
\]
and $\delta^{L}$ is the linear functor given by the span
\[
    B_{0,0} \xleftarrow{} Y_{0} \xrightarrow{} 1.
\]

}

The motivating example, treated in Section~\ref{sub:runexMob}, shows that any (co)cartesian fibration $p: \mathcal{M} \to \Delta^1$ such that $\mathcal{M}$ is a complete decomposition space gives rise to a complete left (or right) comodule configuration:

\medskip
\noindent
{\bf Theorem~\ref{runexRota}.} {\em
     Given an adjunction of decomposition spaces in the form of a bicartesian  fibration $p: \mathcal{M} \to \Delta^1$, suppose moreover that $\mathcal{M}$ is a Möbius decomposition space. Then the bicomodule configuration extracted from this data is Möbius. In particular, we have the Rota formula for the adjunction $p$:
\[
    |\mu^{X}| \star_l |\delta^R| = |\delta^{L}| \star_r |\mu^Y|.
\]

}

When specialised to the case of a classical adjunction between 1-categories, this is the classical Rota formula from page~\pageref{thm:Rotacat}.

\revised{
Finally, the bicomodule configuration of layered sets and posets defined in Section~\ref{sec:bicomodules} is Möbius, and we apply the generalised Rota formula to compute the Möbius function of the incidence algebra of the decomposition space of finite posets.

\medskip
\noindent
{\bf Theorem~\ref{formula}} (\cite[Theorem~3.4]{Ca:mdrs}){\bf .} {\em
    The Möbius function of the incidence algebra of the decomposition space $X$ of finite posets is 
    \[
    \mu(P) = 
    \begin{cases}
       (-1)^n &\text{ if $P \in X_1$ is a discrete poset with $n$ elements,}\\
         0    &\text{ else.}
    \end{cases}
    \]
}

It is shown in \cite{Ca:mdrs} how this result implies similar results for any directed restriction species or free operad.
}

\subsection*{Acknowledgements}
 The author would like to thank Joachim Kock not only for suggesting to investigate this Rota formula but also for help and support all along the project, and also Christian Sattler for useful remarks.
The author is grateful to Kurusch Ebrahimi-Fard and Yannic Vargas, who independently pointed out the work of Aguiar and Ferrer.
Finally the author wish to thank the anonymous referee for a thorough and constructive report that led to many expository improvements.
The author was supported by PhD grant attached to MTM2013-42293-P, and grant number MTM2016-80439-P of Spain.

\section{Preliminaries}
\label{sec:preliminaries}

We work in the $\infty$-category of $\infty$-groupoids, denoted $\infGrpd$, following the notation of \cite{GKT1}. Our $\infty$-categories are quasi-categories; 
the theory of quasi-categories has been substantially developed by Joyal \cite{Joyal02, Joyal08} and Lurie \cite{Lurie}.
An $\infty$-groupoid is an $\infty$-category in which all morphisms are invertible. They are precisely Kan complexes: simplicial sets in which every horn admits a filler (and not only the inner ones).

\subsection{Pullbacks and fibres, slices and linear functors}
\label{sub:pbk}
The main tool used throughout this paper are pullbacks.
We use the following standard lemma many times.

\begin{lem}[{\cite[Lemma 4.4.2.1]{Lurie}}]\label{prismlemma}
  Given a prism diagram of $\infty$-groupoids
  \begin{center}
    \begin{tikzcd}
        X   \arrow[r, ""] \arrow[d, ""]
          & X' \arrow[r, ""]\arrow[d, ""] \arrow[dr, phantom, "\lrcorner", very near start]
          & X''\arrow[d, ""]\\
        Y \arrow[r, ""'] & Y' \arrow[r, ""] & Y''
    \end{tikzcd}
  \end{center}
    in which the right-hand square is a pullback. 
    Then the outer rectangle is a pullback if and only if the left-hand square is.
\end{lem}


Given a map of $\infty$-groupoids $p:X \to S$, an object $s \in S$, the \emph{fibre} $X_s$ of $p$ over $s$ is the pullback
\begin{center}
    \begin{tikzcd}
        X_s \arrow[r, ""] 
           \arrow[d, ""'] 
           \arrow[dr, phantom, "\lrcorner", very near start]
             & X \arrow[d, "p"]\\
        1 \arrow[r, "\name{s}"'] & S.
    \end{tikzcd}
\end{center}

A map of $\infty$-groupoids is a \emph{monomorphism} when its fibres are $(-1)$-groupoids, that is, are either empty or contractible.
If $f: X \to Y$ is a monomorphism, then there is a complement $Z := Y \backslash X$ such that $Y \eq X + Z$; a monomorphism is essentially an equivalence from X onto some connected components of Y.

Recall that the objects of the slice $\infty$-category $\infGrpd_{/I}$ are maps of $\infty$-groupoids with codomain $I$.
Pullback along a morphism $f: J \to I$, defines an functor $f^*: \infGrpd_{/I} \to \infGrpd_{/J}$. This functor is right adjoint to the functor $f_!: \infGrpd_{/J} \to \infGrpd_{/I}$ given by post-composing with $f$.
A span
$I \xleftarrow{p} M \xrightarrow{q} J$
induces a functor between the slices by pullback and postcomposition
\[\infGrpd_{/I} \xra{p^*} \infGrpd_{/M} \xra{q_!} \infGrpd_{/J}.\]
A functor is \emph{linear} if it is homotopy equivalent to a functor induced by a span.
The following Beck-Chevalley rule holds for $\infty$-groupoids:
for any pullback square
    \begin{center}
        \begin{tikzcd}
            J \arrow[r, "f"] 
               \arrow[d, "p"'] 
               \arrow[dr, phantom, "\lrcorner", very near start]
               & I \arrow[d, "q"]\\
            V \arrow[r, "g"'] & U,
        \end{tikzcd}
    \end{center}
the functors
$
    p_!f^*, g^*q_! : \infGrpd_{/I} \to \infGrpd_{/V}
$
are naturally homotopy equivalent (see \cite{GHK} for the technical details regarding coherence of these equivalences).
By the Beck-Chevalley rule, the composition of two linear functors is linear.
For an extended treatment of linear functors and homotopy linear algebra, we refer to \cite{GKT:HLA}.


\subsection{Segal spaces and decomposition spaces}

We consider the functor $\infty$-category
\[
    \Fun{(\simplexcat\op, \infGrpd)}
\]
whose objects are \revised{\emph{simplicial $\infty$-groupoids}, that is} functors from the $\infty$-category $\simplexcat\op$ to the $\infty$-category $\infGrpd$.



A simplicial $\infty$-groupoid $X$ is called a \emph{Segal space} if the following squares are pullbacks, for all $n > 0$:
\begin{center}
    \begin{tikzcd}
        X_{n+1}  \arrow[dr, phantom, "\lrcorner", very near start] \arrow[r, "d_0"] 
           \arrow[d, "d_{n+1}"'] & X_{n}  \arrow[d, "d_n"]\\
         X_{n} \ \arrow[r, "d_0"'] & X_{n-1}.
    \end{tikzcd}
\end{center}

The simplex category $\simplexcat$ has an active-inert factorisation system.
A morphism $[m] \to [n]$ is \emph{active} (also called \emph{generic}) if it preserves endpoints: $g(0) = 0$, $g(m) = n$. A morphism is \emph{inert} (also called \emph{free}) if it is distance preserving: $f(i+1) = f(i) + 1$, for $0 \le i \le m - 1$.
The active maps are generated by the codegeneracy maps and the inner coface maps, and the inert maps are generated by the outer coface maps $d^\bot := d^0$ and $d^\top := d^n$.

    A \emph{decomposition space} $X: \simplexcat\op \to \infGrpd$ is a simplicial $\infty$-groupoid such that the image of any pushout diagram in $\simplexcat$ of an active map $g$ along an inert map $f$ is a pullback of $\infty$-groupoids.
It is enough to check that the following squares are pullbacks, where $0 \le k \le n$:
  \begin{center}
    \begin{tikzcd}
        X_{n+1}   \arrow[r, "s_{k+1}"] 
                  \arrow[d, "d_\bot"']
                  \arrow[dr, phantom, "\lrcorner", very near start]
          & X_{n+2} \arrow[d, "d_\bot"] \\
        X_n \arrow[r, "s_k"'] & X_{n+1},
    \end{tikzcd} \!\!\!
    \begin{tikzcd}
       X_{n+2} \arrow[d, "d_\bot"] 
          & X_{n+3} \arrow[l, "d_{k+2}"']
                    \arrow[d, "d_\bot"]
                    \arrow[dl, phantom, "\llcorner", very near start] \\
         X_{n+1}  & X_{n+2} \arrow[l, "d_{k+1}"],
    \end{tikzcd} \!\!\!
    \begin{tikzcd}
        X_{n+1}   \arrow[r, "s_k"] 
                  \arrow[d, "d_\top"']
                  \arrow[dr, phantom, "\lrcorner", very near start]
          & X_{n+2} \arrow[d, "d_\top"]  \\
        X_n \arrow[r, "s_k"'] & X_{n+1},
    \end{tikzcd} \!\!\!
    \begin{tikzcd}
        X_{n+2} \arrow[d, "d_\top"] 
          & X_{n+3} \arrow[l, "d_{k+1}"']
                    \arrow[d, "d_\top"]
                    \arrow[dl, phantom, "\llcorner", very near start] \\
         X_{n+1}  & X_{n+2} \arrow[l, "d_{k+1}"].
    \end{tikzcd}
  \end{center}

The notion of decomposition space was introduced by Gálvez-Carrillo, Kock, and Tonks \cite{GKT1}, and independently by Dyckerhoff and Kapranov \cite{DK} under the name unital $2$-Segal space. It can be seen as an abstraction of posets. 
\revised{The equivalence of the two notions follows from the pullback formulation of 2-Segal spaces given in Proposition 2.3.2 of \cite{DK}.}
It is precisely the condition required to obtain a counital coassociative comultiplication on $\infGrpd_{/X_1}$, see also \cite{Penney} for the exact role played by the decomposition space condition.
Since the motivation in the present paper comes from combinatorics, we follow the terminology of \cite{GKT1}; for a survey motivated by combinatorics, see \cite{GKT:combinatorics}.

\begin{prop}[{\cite[Proposition 2.3.3]{DK}}, {\cite[Proposition 3.5]{GKT1}}]
    Every Segal space is a decomposition space.
\end{prop}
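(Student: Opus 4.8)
The plan is to show that every Segal space satisfies the pullback conditions defining a decomposition space, by exploiting the fact that the four families of squares displayed above (the two concerning bottom faces/degeneracies and the two concerning top faces/degeneracies) can be deduced from the Segal condition via repeated use of the prism lemma (Lemma~\ref{prismlemma}). The key observation is that in a Segal space, all the "higher" components $X_n$ for $n \geq 2$ are iterated fibre products of $X_1$ over $X_0$ along the spine inclusions; more precisely, the Segal maps $X_n \xra{\sim} X_1 \times_{X_0} \cdots \times_{X_0} X_1$ are equivalences. So the strategy is: first establish that the basic Segal squares propagate to a pullback description of every $X_n$, and then check each of the four square types against this description.

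First I would recall (or re-derive from the displayed Segal pullbacks by induction on $n$, using the prism lemma to glue the pullback squares for consecutive $n$) that for a Segal space the square
\begin{center}
\begin{tikzcd}
X_{n+1} \arrow[r, "d_\top"] \arrow[d, "d_\bot"'] \arrow[dr, phantom, "\lrcorner", very near start] & X_n \arrow[d, "d_\bot"] \\
X_n \arrow[r, "d_\top"'] & X_{n-1}
\end{tikzcd}
\end{center}
is a pullback for all $n \geq 1$, and more generally that any "last-vertex/first-vertex" decomposition square is a pullback. Next I would treat the degeneracy squares (the first and third displayed types): these express that applying an inner degeneracy $s_{k+1}$ (resp. $s_k$) commutes with the outer face $d_\bot$ (resp. $d_\top$) up to a pullback. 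Since $s_{k+1}$ and $d_\bot$ act on "disjoint" parts of the spine — $d_\bot$ forgets the $0$th edge, $s_{k+1}$ inserts a degenerate edge at position $k+1 \geq 1$ — under the Segal equivalence $X_{n+1} \simeq X_1 \times_{X_0} \cdots \times_{X_0} X_1$ the square becomes a manifestly cartesian square of iterated fibre products (inserting a diagonal $X_0 \to X_1$ factor into one leg while projecting away the first factor from both), hence a pullback. For the face squares (the second and fourth displayed types), I would likewise translate along the Segal equivalence: $d_{k+2}$ (resp. $d_{k+1}$) composes two adjacent edges in the spine at an inner position, while $d_\bot$ (resp. $d_\top$) forgets an outer edge; since the two operations touch disjoint segments of the spine, the square is again cartesian as a diagram of iterated fibre products.

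The main obstacle I anticipate is purely bookkeeping rather than conceptual: one must carefully match the simplicial indices in each of the four displayed squares with the corresponding manipulations of spine factorisations, and verify in each case that the face/degeneracy being applied by $d_\bot$ or $d_\top$ is genuinely "disjoint" from the active operation $s_k$, $s_{k+1}$, $d_{k+1}$, or $d_{k+2}$ — i.e. that the active map in the pushout square of $\simplexcat$ and the inert map $d^\bot$ or $d^\top$ have no interaction, so that the pushout in $\simplexcat$ really does land in the region controlled by the Segal condition. A clean way to organise this is to note that each displayed square is the image under $X$ of a pushout in $\simplexcat$ of an active map along an inert map, and then to observe that such a pushout square in $\simplexcat$ can be refined into a pasting of elementary squares each of which $X$ sends to a pullback by the Segal condition plus the prism lemma; then the outer square is a pullback by iterated application of Lemma~\ref{prismlemma}. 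Since the paper cites \cite[Proposition 2.3.3]{DK} and \cite[Proposition 3.5]{GKT1} for this statement, I would be content to indicate this reduction and refer to those sources for the detailed index-chase, rather than writing out all four verifications in full.
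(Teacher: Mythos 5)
Your proposal is correct and follows essentially the same route as the cited proofs (the paper itself gives no proof, deferring to \cite{DK} and \cite{GKT1}): reduce each active--inert square to the Segal pullbacks via the spine/iterated-fibre-product description and repeated use of Lemma~\ref{prismlemma}, the point being that the active map and the outer face map touch disjoint parts of the spine. The disjointness check you flag as the main bookkeeping issue does go through for all four displayed families, so no gap remains beyond the index-chase you defer to the references.
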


There are plenty of examples of decomposition spaces which
are not Segal, e.g. Schmitt's Hopf algebra of graphs, which is a running example in \cite{GKT:combinatorics}.


\subsection{Incidence coalgebras and culf functors}
\label{sub:incidencecoalg}

For any decomposition space $X$, we get an incidence coalgebra \cite{DK}, \cite{GKT1}.
The span 
$
X_1 \xleftarrow{d_1} X_2 \xrightarrow{(d_2,d_0)} X_1 \times X_1
$
defines a linear functor, the \emph{comultiplication}:
\begin{align*}
    \Delta:  \infGrpd_{/X_1} & \to  \infGrpd_{/X_1 \times X_1} \\
            (T \xra{t} X_1) & \mapsto (d_2,d_0)_{!} \circ d_1^{*}(t).
\end{align*}
The span 
$X_1 \xleftarrow{s_0} X_0 \xrightarrow{z} 1$
defines a linear functor, the \emph{counit}:
\begin{align*}
    \delta:  \infGrpd_{/X_1} & \to  \infGrpd \\
            (T \xra{t} X_1) & \mapsto z_{!} \circ s_0^{*}(t).
\end{align*}
The up-to-coherent-homotopy coassociativity follows from the decomposition space axioms, see \cite[\S 5 and 7]{GKT1} or \cite[\S 4.3]{Penney} for a proof.
We obtain a coalgebra $(\infGrpd_{/X_1}, \Delta, \delta)$ called the \emph{incidence coalgebra}.

The $\infty$-category $\infGrpd_{/I}$ plays the role of the vector space with basis $I$. The presheaf category $\infGrpd^I$ can be considered the linear dual of the slice category $\infGrpd_{/I}$ (see \cite{GKT:HLA} for the precise statements and proofs).
\revised{A span $I \xla{} M \xra{} J$ defines both a linear functor 
$\infGrpd_{/I} \xra{} \infGrpd_{/J}$
and the dual linear functor $\infGrpd^J \to \infGrpd^I$.
}
If X is a decomposition space, the coalgebra structure on $\infGrpd_{/X_1}$ therefore induces an algebra structure on $\infGrpd^{X_1}$. In details,
the \emph{convolution product} of two linear functors
$
    F,G: \infGrpd_{/X_1} \to \infGrpd,
$
given by the spans 
 $X_1 \xla{} M \xra{} 1$ and $X_1 \xla{} N \xra{} 1$, 
 is the composite of their tensor product $F \otimes G$ with the comultiplication:
\[F \ast G: \infGrpd_{/X_1} \xra{\Delta} \infGrpd_{/X_1} \otimes \infGrpd_{/X_{1}} \xra{F \otimes G} \infGrpd \otimes \infGrpd \eq \infGrpd,\]
where the tensor product $F \otimes G$ is given by the span 
$X_1 \times X_{1} \xla{} M \times N \xra{} 1$.
The neutral element for convolution is 
\[
\delta: \infGrpd_{/X_1} \to \infGrpd
\]
defined by the span 
$X_1 \xla{s_0} X_0 \xra{} 1.$

A map $f: X \to Y$ of simplicial spaces is \emph{cartesian} on an arrow $[n] \to [k]$ in $\simplexcat$ if the naturality square for $F$ with respect to this arrow is a pullback.
It is called a \emph{right fibration} if it is cartesian on $d_\bot$ and on all active maps, and is called a \emph{left fibration} if it is cartesian on $d_\top$ and on all active maps.

A simplicial map $f: X \to Y$ is \emph{conservative} if it is cartesian with respect to codegeneracy maps
\begin{center}
    \begin{tikzcd}
        X_n  \arrow[dr, phantom, "\lrcorner", very near start] \arrow[r, "s_i"] 
           \arrow[d, "f_n"'] & X_{n+1}  \arrow[d, "f_{n+1}"]\\
         Y_{n} \ \arrow[r, "s_i"'] & Y_{n+1} 
    \end{tikzcd},
    $\qquad 0 \le i \le n$.
\end{center}
It is \emph{ulf} (unique lifting of factorisations) if it is cartesian with respect to inner coface maps

\begin{center}
    \begin{tikzcd}
        X_{n+1}  \arrow[dr, phantom, "\lrcorner", very near start] 
           \arrow[d, "f_{n+1}"'] & X_{n+2} \arrow[l, "d_{i+1}"'] \arrow[d, "f_{n+2}"]\\
         Y_{n+1}  & Y_{n+2} \arrow[l, "d_{i+1}"']
    \end{tikzcd},
    $\qquad 0 \le i \le n$.
\end{center}
We write \emph{culf} for conservative and ulf, that is cartesian on all active maps.
The culf functors induce coalgebra homomorphisms between the incidence algebras. They play an essential role in \cite{GKT1} and \cite{GKT2} as a natural notion of morphism between decomposition spaces, but the present paper deals also with general simplicial maps.

\revised{
\paragraph{Layered finite posets and layered finite sets}
We refer to \cite{GKT:restrict} for the following material.
An \emph{$n$-layering} of a finite poset $P$ is a monotone map $l: P \to \underline{n}$, where $\underline{n} = \{1, \dots, n \}$ are the objects of the skeleton of the category of finite ordered sets (possibly empty) and monotone maps. The fibres $P_i = l^{-1}(i)$, $i \in \underline{n}$ are called layers, and can be empty.
The objects of the groupoid $\ds{C}_n$ of $n$-layered finite posets are monotone maps $l: P \to \underline{n}$ and the morphisms are triangles
}
\begin{center}
    \begin{tikzcd}[column sep=small]
        P \arrow[rr, "\eq"] 
           \arrow[dr, ""'] & & P' \arrow[dl, ""]\\
        & \underline{n}, & 
    \end{tikzcd}
\end{center}
\revised{
where $P \to P'$ is a monotone bijection.
They assemble into a simplicial groupoid $\ds C$. 
The face maps are given by joining layers, or deleting an outer layer for the top and bottom face maps. The degeneracy maps are given by inserting empty layers. 
\begin{prop}[{\cite[Proposition 6.12, Lemma 6.13]{GKT:restrict}}]
    The simplicial groupoid $\ds C$ of layered finite posets is a decomposition space (but not a Segal space), and is complete, locally finite, locally discrete, and of locally finite length.
\end{prop}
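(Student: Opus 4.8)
The plan is to establish the assertions in turn, the decomposition-space property being the only substantial one; I take as routine that merging, deleting, and inserting layers satisfy the simplicial identities, so that $\ds C$ is a well-defined simplicial groupoid. I would first fix the bookkeeping: an object of $\ds C_n$ is a finite poset $P$ with its sequence of layers $P_1, \dots, P_n$ (the fibres of $l \colon P \to \underline n$, possibly empty), the order of $P$ never pointing from $P_j$ to $P_i$ when $i < j$; the inner face $d_i$ ($0 < i < n$) merges the adjacent layers $P_i, P_{i+1}$ (postcomposition with the surjection $\underline n \to \underline{n-1}$ identifying $i$ and $i+1$), the outer faces $d_\bot, d_\top$ delete $P_1$ resp.\ $P_n$ and restrict $P$ to the complementary up-set resp.\ down-set, and $s_k$ inserts an empty layer. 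With this set up I would check the four families of squares in the definition of decomposition space. The two families mixing an outer face with a degeneracy say that deleting an outer layer commutes with inserting an empty layer elsewhere; there a lift is forced — insert the empty layer at its recorded slot, reinstate the recorded deleted layer — and one checks the space of such lifts is contractible. The two families mixing an outer face with an inner face $d_{k+1}$ are the crux: they assert that an $(n+3)$-layering is determined, uniquely up to a contractible choice, by its image after deleting an outer layer (which still keeps the layers $k{+}1$ and $k{+}2$ apart) together with its image after merging layers $k{+}1$ and $k{+}2$ (which still retains the outer layer), the two agreeing after a common further reduction. This holds because the first datum remembers exactly the partition of the merged block and the second remembers exactly the deleted outer layer, and these data are independent. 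Carrying the argument out honestly at the level of $\infty$-groupoids, tracking automorphisms of layered posets throughout, is the step I expect to be the main obstacle. A clean alternative is to deduce the decomposition-space property from the general principle that a directed restriction species yields a decomposition space, finite posets being the prototype; I would use whichever is more transparent in context.

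For the failure of the Segal condition I would argue already in degree $2$. Since the empty poset is the only one admitting a monotone map to $\underline 0 = \varnothing$, we have $\ds C_0 \eq 1$, so the Segal map in degree $2$ would have to be an equivalence $(d_\top, d_\bot) \colon \ds C_2 \longrightarrow \ds C_1 \times \ds C_1$; concretely it sends a finite poset $P$ with a chosen down-set $D$ to the pair $(D, P \setminus D)$ of its induced subposets. This is not essentially injective: the two-element chain with its nontrivial down-set and the two-element antichain with a one-element down-set both map to $(\{\ast\}, \{\ast\})$, yet they are not isomorphic in $\ds C_2$, an isomorphism there being an order-isomorphism respecting the layering. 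Hence $\ds C$ is a decomposition space which is not Segal.

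It remains to verify the four adjectives, each reducing to an immediate fact about finite posets. \emph{Completeness} is the statement that $s_0 \colon \ds C_0 \to \ds C_1$ is a monomorphism; but $s_0$ merely names the empty poset inside $\ds C_1$, and $\varnothing$ has no nontrivial automorphism, so $s_0$ names a single connected component and is mono. \emph{Local finiteness} amounts to the fibres of $s_0$ and of $d_1 \colon \ds C_2 \to \ds C_1$ being finite $\infty$-groupoids: over a finite poset $P$ the latter is controlled by the finite set of down-sets of $P$, and all automorphism groups in play are finite because finite posets have finite automorphism groups. \emph{Local finiteness of length} holds because a simplex of $\ora{\ds C_n}$ lying over $P$ is a surjective monotone map $P \twoheadrightarrow \underline n$, which cannot exist once $n > |P|$; so $N := |P|$ bounds the length, the Phi functors $\Phi_n^{\ds C}$ vanish on $\name P$ for large $n$, and $\Phi_{\mathrm{even}}, \Phi_{\mathrm{odd}}$ are finite. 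Finally, \emph{local discreteness} is the assertion that the fibres of the Phi-functor spans $\ds C_1 \xleftarrow{} \ora{\ds C_n} \xrightarrow{} 1$ (equivalently, of the relevant iterates of $d_1$) are discrete; this holds because a surjective layering of a finite poset admits no automorphism inducing the identity on the underlying poset, so these fibres are genuine sets of layerings rather than stacks. Each of these last points is a one-line check once the simplicial structure is unwound, and none poses any difficulty beyond the bookkeeping already needed for the decomposition-space axiom.
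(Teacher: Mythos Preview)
The paper does not give its own proof of this proposition: it is stated with a citation to \cite[Proposition 6.12, Lemma 6.13]{GKT:restrict} and nothing more, so there is no argument in the present paper to compare your sketch against. That said, your sketch is sound, and your ``clean alternative'' --- deducing the decomposition-space axiom from the general fact that directed restriction species yield decomposition spaces --- is precisely the route taken in the cited source (finite posets being the terminal directed restriction species, as the paper itself remarks). Your direct verification of the pullback squares is a reasonable standalone argument, though as you anticipate, the honest tracking of automorphisms is where the work lies; the counterexample to the Segal condition and the four finiteness checks are all correct. One small omission: for local finiteness you should also note that $\ds C_1$ itself is a locally finite $\infty$-groupoid (a $1$-groupoid with finite isotropy), not only that $s_0$ and $d_1$ have finite fibres.
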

The incidence coalgebra of $\ds{C}$ has comultiplication given by the span
\[
    \ds{C}_1 \xla{d_1} \ds{C}_2 \xra{(d_2, d_0)} \ds{C}_1 \times \ds{C}_1,
\]
where $d_1$ joins the two layers, and $d_2$ and $d_0$ return the two layers.
The comultiplication of a poset is thus obtained by summing over admissible cuts (a $2$-layering of the poset) and taking tensor product of the two layers.

\bigskip

Similarly, let $\mathbf I_n$ denote the groupoid of all layerings of finite sets. Again these groupoids assemble into a simplicial groupoid, denoted $\ds{I}$.

\begin{prop}[{\cite[Proposition 4.3, Lemma 4.4]{GKT:restrict}}]
    The simplicial groupoid $\ds{I}$ is a Segal space, and hence a decomposition space, which is complete, locally finite, locally discrete, and of locally finite length.
\end{prop}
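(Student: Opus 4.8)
The plan is to identify $\ds I$ with the nerve (two-sided bar construction) of the symmetric monoidal groupoid $(\FinSet, \sqcup, \emptyset)$ of finite sets and bijections, and then to read off every assertion from that description. First I would make the identification precise: an $n$-layering $l\colon S \to \underline n$ is the same datum as an ordered tuple $(S_1, \dots, S_n)$ of finite sets, namely the layers $S_i = l^{-1}(i)$ (possibly empty), and a bijection over $\underline n$ is the same as a tuple of bijections $S_i \eq S_i'$; so $\ds I_n \eq \FinSet^n$, with $\ds I_0 \eq 1$, with inner face maps $d_i$ joining $S_i$ and $S_{i+1}$ by disjoint union, outer face maps deleting the first or last layer, and degeneracies $s_i$ inserting an empty layer. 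In other words $\ds I$ is the nerve of the monoidal groupoid $(\FinSet, \sqcup, \emptyset)$. From here the Segal condition is immediate: because $\ds I_0 \eq 1$, the Segal map $\ds I_n \to \ds I_1 \times_{\ds I_0} \cdots \times_{\ds I_0} \ds I_1$ is the identity of $\FinSet^n$, hence an equivalence, so $\ds I$ is a Segal space, and therefore a decomposition space by the Proposition above that every Segal space is a decomposition space.

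The remaining four conditions I would check one at a time against the model $\ds I_n \eq \FinSet^n$. \emph{Complete}: $s_0\colon \ds I_0 \to \ds I_1$ is the inclusion of the (contractible) component $\{\emptyset\} \subset \FinSet$, hence a monomorphism; likewise each $s_i$ is an equivalence onto the union of components with an empty layer in slot $i$. \emph{Locally finite}: $\ds I_1 \eq \FinSet \eq \coprod_{k \ge 0} B\Sigma_k$ is locally finite, every component being a finite groupoid, and $s_0$, $d_1$ are finite maps, the homotopy fibre of $d_1$ over a set $S$ being the finite groupoid of ordered two-block decompositions of $S$. \emph{Of locally finite length}: a nondegenerate $n$-simplex over an edge $S$ is an ordered decomposition $S \eq S_1 \sqcup \cdots \sqcup S_n$ into \emph{nonempty} layers (a principal edge is degenerate precisely when it is empty), so $n \le \lvert S\rvert$ and there are only finitely many; hence the length of $S$ equals $\lvert S\rvert$ and the length filtration is exhausted. \emph{Locally discrete}: the homotopy fibre of $d_1^{\,n-1}\colon \ds I_n \to \ds I_1$ over $S$ has as objects the ordered $n$-fold decompositions of a \emph{fixed} set $S$ and as morphisms the bijections respecting both the decomposition and the chosen identification with $S$; the latter constraint forces the identity, so the fibre is the \emph{discrete} set of ordered $n$-fold decompositions of $S$.

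The whole argument is routine bookkeeping with the bar-construction picture; the one place that needs genuine care is the last rigidity point, where one must not confuse an ordered tuple of finite sets — which carries the automorphism group $\prod_i \Sigma_{\lvert S_i\rvert}$ — with a decomposition of a fixed set, which is rigid; equivalently, one must distinguish $\ds I_n$ itself from the fibres of $d_1^{\,n-1}$ over $\ds I_1$. One also wants to check, for the length filtration, that "all principal edges nondegenerate" really does translate to "all layers nonempty", which uses that the only degenerate $1$-simplex of $\ds I$ is the empty set. All of this is of course already recorded in \cite[Proposition~4.3, Lemma~4.4]{GKT:restrict}.
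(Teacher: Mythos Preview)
Your argument is correct. Note, however, that the paper does not actually prove this proposition: it is quoted from \cite[Proposition~4.3, Lemma~4.4]{GKT:restrict} and stated without proof, so there is no ``paper's own proof'' to compare against. Your direct verification via the identification $\ds I_n \simeq \FinSet^{\,n}$ (the bar construction of the symmetric monoidal groupoid $(\FinSet,\sqcup,\emptyset)$) is exactly the kind of argument the cited reference gives, and each of the four finiteness/discreteness checks you make is accurate, including the rigidity point you single out at the end.
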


The simplicial groupoid $\ds{C}$ is the decomposition space corresponding to the terminal directed restriction species, finite posets and convex maps, while $\ds{I}$ is the decomposition space corresponding to the terminal restriction species, finite sets and injections.
The incidence coalgebra of $\ds{I}$ is the binomial coalgebra \cite[\S 2.4]{GKT:restrict} with well-known Möbius function $(-1)^n$ for a set with $n$ elements.
}



\section{Bicomodules}
\label{sec:bicomodules}

\subsection{Comodules}
\label{sub:comodules}

The theory of modules in the context of decomposition spaces has been developed by Walde \cite{Walde}, and independently by Young \cite{Young}, both in the context of Hall algebras. They call them relative 2-Segal spaces. Here we give a conceptual way to reformulate their definitions using linear functors.

Given a map between two simplicial $\infty$-groupoids $f: C \to X$, 
the span \\
$C_{0}  \xleftarrow{d_1} C_{1} 
    \xrightarrow{(f_{1},d_0)} X_{1} \times C_{0}$
defines a linear functor
$    \gamma_l: \infGrpd_{/{C_{0}}} \to \infGrpd_{/X_{1}} \otimes \infGrpd_{/{C_{0}}}$, and the span
$C_{0}  \xleftarrow{d_0} C_{1} 
    \xrightarrow{(d_1,f_{1})} C_{0} \times X_{1}$
defines a linear functor
$    \gamma_r: \infGrpd_{/{C_{0}}} \to \infGrpd_{/C_{0}} \otimes \infGrpd_{/{X_{1}}}$.

\begin{prop}\label{comodule}
Let $f: C \to X$ be a map between two simplicial $\infty$-groupoids.
    Suppose moreover that $C$ is Segal, $X$ is a decomposition space and the map $f: C \to X$ is culf, then the span
\[
    C_{0}  \xleftarrow{d_1} C_{1} 
    \xrightarrow{(f_{1},d_0)} X_{1} \times C_{0}
\]
induces on the slice $\infty$-category $\infGrpd_{/C_0}$ the structure of a left $\infGrpd_{/X_1}$-comodule (at the $\pi_0$ level), and the span 
\[
    C_{0}  \xleftarrow{d_0} C_{1} 
    \xrightarrow{(d_1,f_{1})} C_{0} \times X_{1}.
\]
induces on $\infGrpd_{/C_0}$ the structure of a right $\infGrpd_{/X_1}$-comodule (at the $\pi_0$ level).
\end{prop}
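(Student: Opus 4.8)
The plan is to verify the comodule axioms by direct span manipulation, using the prism lemma (Lemma~\ref{prismlemma}) and the Beck--Chevalley rule as the workhorses. I will treat the left comodule case; the right case is symmetric, swapping $d_0 \leftrightarrow d_1$ and top/bottom faces. First I would record that the incidence coalgebra comultiplication $\Delta$ on $\infGrpd_{/X_1}$ is given by the span $X_1 \xleftarrow{d_1} X_2 \xrightarrow{(d_2,d_0)} X_1 \times X_1$ as recalled above, and that the claimed coaction $\gamma_l$ is given by $C_0 \xleftarrow{d_1} C_1 \xrightarrow{(f_1,d_0)} X_1 \times C_0$. The two axioms to check are coassociativity, i.e.\ that $(\Delta \otimes \id)\circ\gamma_l \eq (\id \otimes \gamma_l)\circ\gamma_l$ as linear functors $\infGrpd_{/C_0} \to \infGrpd_{/X_1}\otimes\infGrpd_{/X_1}\otimes\infGrpd_{/C_0}$, and counitality, i.e.\ that $(\delta \otimes \id)\circ\gamma_l \eq \id$, where $\delta$ is the counit given by $X_1 \xleftarrow{s_0} X_0 \to 1$.

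The heart of the argument is coassociativity. Composing spans, $(\id\otimes\gamma_l)\circ\gamma_l$ is computed by the pullback of $C_1 \xrightarrow{d_0} C_0 \xleftarrow{d_1} C_1$, which by the Segal condition on $C$ is exactly $C_2$, with outer legs $(f_1 d_2, f_1 d_0, d_0 d_0)$ landing in $X_1 \times X_1 \times C_0$; here I use that for $C$ Segal the square with $d_0,d_1$ on $C_2 \to C_1 \to C_0$ is a pullback. On the other side, $(\Delta\otimes\id)\circ\gamma_l$ is computed by pulling back $X_2 \xrightarrow{d_1} X_1 \xleftarrow{f_1} C_1$. The key point is that this pullback is again equivalent to $C_2$: this is where the \culf hypothesis on $f$ enters, since $f$ being \culf means it is cartesian on the inner face map $d_1: X_2 \to X_1$, so the square relating $C_2, C_1, X_2, X_1$ via $d_1$ and $f$ is a pullback. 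One then checks the outer legs agree, using simplicial identities ($d_2 f_2 = f_1 d_2$, $d_0 f_2 = f_1 d_0$ by naturality of $f$, and $d_0 d_0 = d_0 d_1$ on $C_2$), so the two composite spans are equivalent. I would present this as a single prism/pasting diagram of pullback squares and invoke Lemma~\ref{prismlemma} to assemble it.

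For counitality, composing $\gamma_l$ with $\delta\otimes\id$ amounts to pulling back $C_1 \xrightarrow{f_1} X_1 \xleftarrow{s_0} X_0$; since $f$ is conservative (cartesian on degeneracies), this pullback is $C_0$ with the degeneracy $s_0: C_0 \to C_1$, and then $d_1 s_0 = \id_{C_0}$ together with $d_0 s_0 = \id_{C_0}$ gives that the resulting endospan on $C_0$ is the identity span. The main obstacle I anticipate is purely bookkeeping: matching up the several legs of the composite spans correctly and making sure every pullback square invoked is genuinely one of the squares guaranteed by the Segal axiom for $C$, the decomposition-space axiom for $X$, or the \culf-ness of $f$ — in particular being careful that coassociativity needs $X$ to be a decomposition space only insofar as $\Delta$ itself is coassociative, which is already established in \cite{GKT1}, while the \emph{compatibility} with $\gamma_l$ uses only the Segal condition on $C$ and the cartesianness of $f$ on the relevant active maps. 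Finally, since all of this is carried out at the level of spans and linear functors, the equivalences obtained are equivalences of linear functors; passing to $\pi_0$ (homotopy cardinality or connected components) yields the strict comodule identity, which is the "at the $\pi_0$ level" qualification in the statement. The coherence of the up-to-homotopy structure is not addressed, consistently with the paper's stated scope.
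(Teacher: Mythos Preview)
Your proof is correct and takes essentially the same approach as the paper: both reduce coassociativity to two pullback squares—one supplied by the Segal condition on $C$, the other by the culf-ness of $f$—and conclude via Beck--Chevalley. The paper packages this as a single $3\times 3$ span diagram with $C_2$ at the centre, whereas you compute the two composite spans separately and identify each with $C_2$; you also spell out counitality via conservativity of $f$, which the paper's proof leaves implicit.
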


The data needed to obtain a comodule is called a \emph{comodule configuration}, that is a culf map from a Segal space to a decomposition space.

\begin{rmk}\label{rmk:comodule}
The relevance of the Segal condition on $C$ and the culf condition on $f$ can be explained individually as follows.
It is standard that for a category $C$, the coalgebra of arrows $C_1$ coacts on $C_0$: the coaction (from the right) is given by
  $b \mapsto \sum_{f:a\to b} a \otimes f$.
Coassociativity of this coaction is equivalent to the Segal condition. 
Now a culf map $C \to X$ defines a coalgebra homomorphism, and in this way, also $X_1$ coacts on $C_0$, by “corestriction of coscalars”.
\end{rmk}

\revised{
\begin{rmk}
    The proposition is stated only at the $\pi_0$-level. This means that we establish only the comodule structure up to homotopy, but do not establish the \emph{coherence} of this up-to-homotopy structure. A stronger result, a partial coherence result, is given by \cite{Walde} and \cite{Young}, who establish the coherence at the $1$-truncated level (rather than the $0$-truncated level established here). It is most likely that full coherence can be established by exploiting the techniques employed by \cite{GKT1} and \cite{Penney}. While only a small bit of the axioms are used to establish the proposition as stated, the full decomposition-space axioms and the culf condition are expected to be required for the fully coherent result, and this is why these conditions have been included in the definition of comodule configuration.
\end{rmk}
}

\begin{proof}
We want to prove that the map $\gamma_l$ is a left $\infGrpd_{/X_{1}}$-coaction.
The desired diagram, commutative up to homotopy 
\begin{center}
  \begin{tikzcd}[sep=large]
    \infGrpd_{/C_{0}} \ar[r, "\gamma_l"] 
                          \ar[d, "\gamma_l"'] 
      & \infGrpd_{/X_{1} \times C_{0}} \ar[d, "\Id_{} \otimes \gamma_l"]\\
    \infGrpd_{/X_{1} \times C_{0}}
        \ar[r, "\Delta \otimes \Id_{}"'] 
      & \infGrpd_{/X_{1} \times X_{1} \times C_{0}}
  \end{tikzcd}
\end{center}
is induced by the solid spans in the diagram
\begin{center}
  \begin{tikzcd}[sep=huge]
    C_{0} 
      & C_{1} \ar[l, "d_1"'] 
                              \ar[r, "{(f_1,d_0)}"]
      & X_{1} \times C_{0} \\
    C_{1} \ar[u, "d_1"] 
                          \ar[d, "{(f_1,d_0)}"'] 
      & C_{2} \ar[l, dashed, "d_1"] 
                              \ar[d, dashed, "{(f_1, d_0 d_0)}"] 
                              \ar[r, dashed, "{(d_2 f_1,d_0)}"'] 
                              \ar[u, dashed, "d_2"]
                              \ar[ur, phantom, "\urcorner", very near start]
                              \ar[dl, phantom, "\llcorner", very near start]
      & X_{1} \times C_{1} \ar[u, "\Id_{} \otimes d_1"'] 
                                          \ar[d, "\Id_{} \otimes {(f_1,d_0)}"]\\
     X_{1} \times C_{0} 
       & X_{2} \times C_{0} \ar[l, "{d_1 \otimes \Id}"]  \ar[r, "{(d_2,d_0) \otimes \Id}"']
       & X_{1} \times X_{1} \times C_{0}.
  \end{tikzcd}
\end{center}
The coassociativity (at the $\pi_0$ level) will follow from Beck-Chevalley equivalences if we have the two pullbacks indicated in the diagram.
The upper right-hand square is a pullback if and only if its composite with the second projection is a pullback.
This composite outer square is a pullback because $C$ satisfies the Segal condition.
Similarly, the lower left-hand square is a pullback if its composite with the first projection is a pullback.
This composite outer square is a pullback because $f: C \to X$ is culf.
\end{proof}


\revised{
\begin{ex}[Décalage \cite{Illusie}]\label{exDec}
Given a simplicial space $X$, the \emph{lower décalage} $\Dec_\bot (X)$ is the simplicial space obtained by deleting $X_0$, all $d_0$ face maps and $s_0$ degeneracy maps. The original $d_0$ maps induce a simplicial map $d_\bot : \Dec_\bot (X) \to X$, called the décalage map.
Similarly, the \emph{upper décalage} $\Dec_\top (X)$ is the simplicial space obtained by deleting $X_0$, all last face maps $d_\top$ and last degeneracy maps $s_\top$. The original $d_\top$ maps induce a simplicial map $d_\top : \Dec_\top X \to X$.

It is well known that $\Dec_\bot(X)$ is a Segal space and the décalage map 
is culf (see \cite[Proposition 4.9]{GKT1}). Hence we have a comodule configuration. The resulting comodule is the incidence coalgebra of $X$ as a (right) comodule over itself.
\end{ex}
}

\medskip

For categories, given a functor $f: C \to D$, define the \emph{mapping cylinder} (or \emph{collage} in \cite{JoyalCL}) $M_f$ to be the category where objects are either objects of $C$ or objects of $D$ and
\[
    \Hom_{M_f}(x,y) =
        \begin{cases}
             \Hom_{C}(x,y)    \, &\text{ if } x, y \in C,\\
             \Hom_{D}(x,y)    \, &\text{ if } x, y \in D,\\
             \Hom_{D}(f(x),y) \, &\text{ if } x \in C, y \in D,\\
             \emptyset        \, &\text{ else }.\\
        \end{cases}
\]
There exists a unique $p: M_f \to \Delta^1$ such that $p^{-1}(0) = C$ and $p^{-1}(1) = D$. This is moreover a cocartesian fibration, the cocartesian lift for $x \in C$ being given by $\Id_{f(x)} \in \Map_{M_f}(x,f(x))$.
The shape of a comodule configuration is that of $(M_{\id})\op$, where $M_{\id}$ is the mapping cylinder of the identity of $\simplexcat$.
\revised{
In other words, a comodule configuration is a functor from $(M_{\id})\op$ to $\infGrpd$ (satisfying certain conditions).}

Let $\simplexcat_{\text{bot}}$ be the simplex category of finite linear orders with a specified bottom element, and bottom-preserving monotone maps.
Consider the mapping cylinder $M_j$ of the functor $j: \simplexcat \to \simplexcat_{\text{bot}}$ freely adding a bottom element.
Presheaves on $M_j$ are diagrams of the following shape.
\begin{center}
    \begin{tikzcd}[sep=large]
         & X_{0} \ar[r, "s_0" description, near end] 
         & X_{1} \ar[l, shift left=1.5, "d_0", near end]
                    \ar[l, shift right=1.5, "d_1"', near end]
                    \ar[r, shift left=1.5, "s_1" description, near end] 
                    \ar[r, shift right=1.5, "s_0" description, near end]
         & X_{2} \ar[l, "d_1" description, near end]
                    \ar[l, shift left=3, "d_0", near end] 
                    \ar[l, shift right=3, "d_2"', near end] 
                    \ar[r, phantom, "\dots"]
                    & \phantom{} \\
       C_{-1} \ar[r, bend right]
         & C_{0} \ar[r, bend right,shift right=1]
                   \ar[u, "v"']
                   \ar[l, "u"']
                   \ar[r, "s_0" description] 
         & C_{1} \ar[r, bend right,shift right=2.5]
                   \ar[u, "v"']
                   \ar[l, shift left=1.5, "d_0", near end]
                    \ar[l, shift right=1.5, "d_1"', near end]
                   \ar[r, shift left=1.5, "s_1" description, near end] 
                   \ar[r, shift right=1.5, "s_0" description, near end] 
         & C_{2} \ar[u, "v"']
                   \ar[l, "d_1" description, near end]
                   \ar[l, shift left=3, "d_0", near end] 
                   \ar[l, shift right=3, "d_2"', near end]
                   \ar[r, phantom, "\dots"]
                    & \phantom{}
    \end{tikzcd}
\end{center}

This is the shape of what we call a \emph{right pointed comodule configuration}: it is a comodule configuration $C \to X$ such that the Segal space $C$ is augmented, and with new bottom sections $s_{-1}: C_{n-1} \to C_n$.
The importance of the pointing (the extra bottom degeneracy maps) is that it makes possible to formulate the notion of completeness and the condition locally finite length, see Section~\ref{sub:Mobbicomod} below; it guarantees the existence of a filtration on the associated comodule (see \cite[\S 6]{GKT2} for a similar argument), which is of independent interest.


\revised{
\begin{ex}
    The comodule configuration obtained from the lower décalage of a decomposition space $X$ is also right pointed, the augmentation map is given by $d_1: X_1 \to X_0$, and the extra bottom sections by $s_0$.
\end{ex}
}

\subsection{Augmented bisimplicial infinity-groupoids}
\label{sub:bisimplicialgrpd}

We shall establish conditions under which left and right comodule structures define a bicomodule.
The main objects of interest are augmented bisimplicial $\infty$-groupoids subject to conditions, which are formulated in terms of pullbacks.
We consider the functor $\infty$-category
\[
    \Fun{(\simplexcat\op \times \simplexcat\op, \infGrpd)}
\]
whose objects are \revised{\emph{bisimplicial $\infty$-groupoids}, that is} functors from the $\infty$-category $\simplexcat\op \times \simplexcat\op$ to the $\infty$-category $\infGrpd$.


A \emph{double Segal space} is a bisimplicial $\infty$-groupoid satisfying the Segal condition for each restriction $\simplexcat\op \times \{[n]\} \to \infGrpd$ (the columns) and $\{[n]\} \times \simplexcat\op \to \infGrpd$ (the rows).

\revised{
Let $\simplexcat_{+}$ be the augmented simplex category of all finite ordinals and order-preserving maps.
}
An \emph{augmented} bisimplicial $\infty$-groupoid $B$ has in addition $\infty$-groupoids $B_{i,-1}$ and $B_{-1,i}$ of $(-1)$-simplices.
We consider the functor $\infty$-category
\[
    \Fun{(\simplexcat\op_{+} \times \simplexcat\op_{+} \backslash \{(-1,-1)\}, \infGrpd)}
\]
whose objects are \emph{augmented bisimplicial $\infty$-groupoids}.

\begin{rmk}\label{shape1}
    The shape of an augmented bisimplicial $\infty$-groupoid is $(\simplexcat_{/ \Delta^1})\op$.
    We denote $[i,j]$ the object given by the map $\Delta^{i+1+j} \to \Delta^1$ sending the $i+1$ first vertices to $0$ and the others to $1$.
We allow $i$ or $j$ to be equal to $-1$ but not both.
Maps $[i,j] \to [k,l]$ are given by the inclusions respecting the horizontal map.
For example, the object $[2,1]$ can be drawn as follows
\begin{center}
  \begin{tikzcd}\label{picturejump}
             & .        \\
    . \ar[r] & . \ar{u} \\
    . \ar[u] &          \\
    . \ar[u] &  
  \end{tikzcd}
\end{center}
where the horizontal maps lie over the map in $\Delta^1$.

    We can draw $[i,j]$ as a column of $i{+}1$ black dots followed by $j{+}1$ white dots. Maps send black dots to black dots and white dots to white dots, without crossing.

\end{rmk}

We use the following notation for an augmented bisimplicial $\infty$-groupoid. We denote $d_k: B_{i,j} \to B_{i,j-1}$ and $e_l: B_{i,j} \to B_{i-1,j}$ the face maps, and $s_k: B_{i,j-1} \to B_{i,j}$ and $t_l: B_{i-1,j} \to B_{i,j}$ the degeneracy maps; $u$ and $v$ are the augmentation maps.

\begin{center}
    \begin{tikzcd}[sep=large]
         & B_{-1,0} \ar[r, "s_0" description, near end] 
         & B_{-1,1} \ar[l, shift left=1.5, "d_0", near end]
                    \ar[l, shift right=1.5, "d_1"', near end]
                    \ar[r, shift left=1.5, "s_1" description, near end] 
                    \ar[r, shift right=1.5, "s_0" description, near end]
         & B_{-1,2} \ar[l, "d_1" description, near end]
                    \ar[l, shift left=3, "d_0", near end] 
                    \ar[l, shift right=3, "d_2"', near end] 
                    \ar[r, phantom, "\dots"]
                    & \phantom{} \\
       B_{0,-1} \ar[d, "t_0" description]
         & B_{0,0} \ar[d, "t_0" description]
                   \ar[u, "v"']
                   \ar[l, "u"']
                   \ar[r, "s_0" description] 
         & B_{0,1} \ar[d, "t_0" description]
                   \ar[u, "v"']
                   \ar[l, shift left=1.5, "d_0", near end]
                    \ar[l, shift right=1.5, "d_1"', near end]
                   \ar[r, shift left=1.5, "s_1" description, near end] 
                   \ar[r, shift right=1.5, "s_0" description, near end] 
         & B_{0,2} \ar[u, "v"']
                   \ar[d, "t_0" description]
                   \ar[l, "d_1" description, near end]
                   \ar[l, shift left=3, "d_0", near end] 
                   \ar[l, shift right=3, "d_2"', near end]
                   \ar[r, phantom, "\dots"]
                    & \phantom{}\\
       B_{1,-1} \ar[u, shift left=1.5, "e_0", near end]
                \ar[u, shift right=1.5, "e_1"', near end]
                \ar[d, phantom, "\vdots"]
         & B_{1,0} \ar[u, shift left=1.5, "e_0", near end]
                   \ar[u, shift right=1.5, "e_1"', near end]
                   \ar[l, "u"'] 
                   \ar[r, "s_0" description, near end] 
                    \ar[d, phantom, "\vdots"]
         & B_{1,1} \ar[u, shift left=1.5, "e_0", near end]
                   \ar[u, shift right=1.5, "e_1"', near end]
                   \ar[l, shift left=1.5, "d_0", near end]
                    \ar[l, shift right=1.5, "d_1"', near end]
                   \ar[r, shift left=1.5, "s_1" description, near end] 
                   \ar[r, shift right=1.5, "s_0" description, near end] 
                    \ar[d, phantom, "\vdots"]
         & B_{1,2}   
                   \ar[u, shift left=1.5, "e_0", near end]
                   \ar[u, shift right=1.5, "e_1"', near end]
                   \ar[l, "d_1" description, near end]
                   \ar[l, shift left=3, "d_0", near end] 
                   \ar[l, shift right=3, "d_2"', near end]
                   \ar[r, phantom, "\dots"]
                    \ar[d, phantom, "\vdots"]
                    & \phantom{}\\
       \phantom{}
          & \phantom{}
          &\phantom{}
          & \phantom{}
    \end{tikzcd}
\end{center}

An augmented double Segal space satisfies that rows and columns are Segal. If we suppose that augmentations are culf and $B_{\bullet,-1}$ and $B_{-1,\bullet}$ are decomposition spaces, we can apply Proposition \ref{comodule} to obtain comodules: the span
\[B_{0,0}  \xleftarrow{e_1} B_{1,0} 
    \xrightarrow{(u,e_0)} B_{1,-1} \times B_{0,0}\]
    induces on $\infGrpd_{/B_{0,0}}$ the structure of a left comodule over $\infGrpd_{/B_{1,-1}}$, and the span
\[B_{0,0}  \xleftarrow{d_0} B_{0,1} 
    \xrightarrow{(d_1,v)} B_{0,0} \times B_{-1,1}\]
induces on $\infGrpd_{/B_{0,0}}$ the structure of a right comodule over $\infGrpd_{/B_{-1,1}}$.


\subsection{Stability}
\label{sub:stability}

We say a bisimplicial $\infty$-groupoid is \emph{stable} if the following squares are pullbacks:

\begin{center}
  \begin{tikzcd}
    B_{i-1,j-1} 
        & B_{i-1,j}  \arrow[l, "d_k"'] \\
    B_{i,j-1} \arrow[u, "e_l"]  
        & B_{i,j} \arrow[l, "d_k"] 
                  \arrow[u, "e_l"'] 
                  \arrow[ul, phantom, "\ulcorner", very near start],
  \end{tikzcd}
  \qquad
  \begin{tikzcd}
    B_{i-1,j-1} \arrow[r, "s_k"]
        & B_{i-1,j}   \\
    B_{i,j-1} \arrow[u, "e_l"] \arrow[r, "s_k"'] \arrow[ur, phantom, "\urcorner", very near start]
        & B_{i,j} 
                  \arrow[u, "e_l"'] ,
  \end{tikzcd}
  
  \end{center} 
  \begin{center}
    \begin{tikzcd}
    B_{i-1,j-1}  \arrow[d, "t_l"]
        & B_{i-1,j} \arrow[d, "t_l"] \arrow[l, "d_k"'] \arrow[dl, phantom, "\llcorner", very near start]  \\
    B_{i,j-1} 
        & B_{i,j} \arrow[l, "d_k"],
  \end{tikzcd}
    \qquad
    \begin{tikzcd}
    B_{i-1,j-1} \arrow[r, "s_k"] \arrow[d, "t_l"] \arrow[dr, phantom, "\lrcorner", very near start]
        & B_{i-1,j} \arrow[d, "t_l"]   \\
    B_{i,j-1}  \arrow[r, "s_k"'] 
        & B_{i,j},
  \end{tikzcd}
\end{center}
for all values of the indices except for $d_\bot$ along $e_\top$ and $d_\top$ along $e_\bot$.

\begin{rmk}
A bisimplicial $\infty$-groupoid is stable if it satisfies all the following properties:
    \begin{itemize}
        \item $s_k: B_{i,j-1} \to B_{i,j}$ is a cartesian natural transformation, for all $0 \le k \le j-1$;
        \item $d_k$, $k \neq \top, \bot$, is a cartesian natural transformation;
        \item $d_\top$ is a left fibration;
        \item $d_\bot$ is a right fibration.
    \end{itemize}
\end{rmk}

\begin{rmk}
Bergner, Osorno, Ozornova, Rovelli, and Scheimbauer introduced the notion of stable double category (bisimplicial {\em set}) in \cite{BOORS}: they define a double category to be stable if every square is uniquely determined by its span of source morphisms and, independently by its cospan of target morphisms.
The present definition is a categorical reformulation of their notion suitable for $\infty$-groupoids.
The motivation for the terminology is the following example.
Let $C$ be a stable $\infty$-category (see \cite[Chapter 1]{LurieHA}).
Define a double Segal space $B$ where $B_{0,0}$ is the $\infty$-groupoid of objects of $C$, where $B_{0,1}$ is the $\infty$-groupoid of arrows of $C$ (as in the fat nerve), and $B_{1,1}$ is the $\infty$-groupoid of pullback squares (equivalently, pushout squares).
More generally, $B_{m,n}$ is the $\infty$-groupoid of ($\Delta^m {\times}\Delta^n$)-diagrams in $C$ for which all the rectangles are pullbacks (and hence pushouts).
This is a stable bisimplicial $\infty$-groupoid (which of course is a double Segal space).
This is almost by definition: since we only took pullback and pushout squares, they are determined by their sources by pushout or their targets by pullback, in the sense of our definition.
\end{rmk}

\begin{lem}\label{stablelemma}
Let $B$ be a double Segal space.
Suppose we have the two following pullbacks:
\begin{center}
  \begin{tikzcd}
    B_{0,0} 
        & B_{0,1}  \arrow[l, "d_0"'] \\
    B_{1,0} \arrow[u, "e_0"]  {stablelemma}
        & B_{1,1} \arrow[l, "d_0"] 
                  \arrow[u, "e_0"'] 
                  \arrow[ul, phantom, "\ulcorner", very near start]
  \end{tikzcd}
  \qquad
  \begin{tikzcd}
    B_{0,0} 
        & B_{0,1}  \arrow[l, "d_1"'] \\
    B_{1,0} \arrow[u, "e_1"]  
        & B_{1,1}, \arrow[l, "d_1"] 
                  \arrow[u, "e_1"'] 
                  \arrow[ul, phantom, "\ulcorner", very near start]
  \end{tikzcd}
\end{center}
then the double Segal space is stable.
\end{lem}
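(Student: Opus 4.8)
The plan is to verify that every one of the four families of squares in Section~\ref{sub:stability} is a pullback, reducing each one to one of the two given squares (or to a square that is visibly a pullback) by repeated use of the prism lemma, Lemma~\ref{prismlemma}, together with the row and column Segal conditions; one has to check along the way that the two \emph{excluded} pairs, $d_\bot$-along-$e_\top$ and $d_\top$-along-$e_\bot$, are never needed.

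First I would treat the base cell $i=j=1$. The face-map stability squares there are exactly the $d_0$-vs-$e_0$ and $d_1$-vs-$e_1$ squares, which are the hypotheses, together with the two excluded ones, so nothing is to be done. For a degeneracy square at $(1,1)$, say the one for $s_0\colon B_{1,0}\to B_{1,1}$ against $e_0$, I would paste it on the side with the $d_0$-vs-$e_0$ square and use $d_0 s_0=\id$: the composite rectangle is an identity square, hence a pullback, so by Lemma~\ref{prismlemma} the degeneracy square is a pullback. The three remaining degeneracy squares at $(1,1)$ are disposed of identically, each time pairing the degeneracy with an outer face that retracts it and invoking the matching hypothesis ($d_0$-$e_0$ or $d_1$-$e_1$).

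Next, the propagation to higher $i,j$. Fixing $j$ and viewing the vertical faces and degeneracies as simplicial maps in the horizontal direction, the column Segal conditions give $B_{i,j}\simeq B_{1,j}\times_{B_{0,j}}\cdots\times_{B_{0,j}}B_{1,j}$, and since $d_k,s_k$ commute with the horizontal structure they respect this decomposition; an iterated application of the prism lemma then reduces a stability square at $(i,j)$ to one at $(1,j)$ when the relevant horizontal arrow is an outer face or a degeneracy, and to one at $(2,j)$ when it is an inner face (after splitting off the horizontal composition map $B_{2,j}\to B_{1,j}$). The symmetric argument, using that the rows are Segal, lowers the other index and reduces $d_k$ with $k=\bot$ or $k=\top$ to $j=1$, where $d_\bot,d_\top$ are projections in the vertical Segal decomposition, while an inner $d_k$ reduces to $j=2$. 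After all this only finitely many squares with indices in $\{1,2\}$ remain, and the only ones not already covered involve an inner vertical face $d_1\colon B_{\bullet,2}\to B_{\bullet,1}$ (a double-inner square reducing further to these by one more pullback pasting), or, symmetrically, an inner horizontal face.

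For those I would argue directly. Under the vertical Segal equivalence $B_{\bullet,2}\simeq B_{\bullet,1}\times_{B_{\bullet,0}}B_{\bullet,1}$ the face $d_1$ becomes the composition map, and the claim that its naturality square against, say, $e_\bot$ is a pullback is the $\infty$-groupoid version of the familiar fact that ``each square is determined by its cospan of target edges'' (the first hypothesis, already propagated to the level needed) forces a square equipped with a two-step factorization of one of its edges to split uniquely into two stacked squares: one builds the two pieces one after another, each from the relevant target cospan, and checks they glue using the double Segal axioms; the case of $e_\top$ is the mirror image, using the second hypothesis instead. The main obstacle, and the only place where content beyond the prism lemma enters, is exactly this splitting argument for the inner faces, together with the bookkeeping needed to be sure the excluded pairs are never invoked in the reductions.
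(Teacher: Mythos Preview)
Your overall plan—prism lemma plus Segal conditions, checking that the excluded pairs never get invoked—is the same as the paper's, and your base-cell and degeneracy arguments match it exactly. The paper's propagation step is likewise a cube/prism argument, so the approaches coincide there as well, though your description conflates horizontal and vertical at one point: it is the \emph{vertical} arrow $e_l$ whose position (outer, inner, or a degeneracy) governs how the column Segal decomposition reduces the $i$-index, not the horizontal one.

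The genuine difference is in how the inner faces are handled. Your ``splitting'' argument can be made precise, but the paper's device is slicker and avoids manipulating iterated fibre products or composition maps. Once all $d_\top$-versus-$e_\top$ squares are known to be pullbacks (your propagation, or equivalently the paper's cube), the paper treats an inner vertical face $e_k$ (with $0<k<i$) against $d_\top$ by \emph{postcomposing vertically} with $(e_\top)^r$, where $r=i-k$: the simplicial identities give $(e_\top)^r\circ e_k=(e_\top)^{r+1}$, so both the upper square and the composite rectangle are iterated $d_\top$-$e_\top$ squares already established, and Lemma~\ref{prismlemma} delivers the desired lower square directly. The same trick, swapping $\top$ for $\bot$ and the roles of $d$ and $e$, handles all remaining inner-face cases. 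This bypasses your ``build the two pieces and check they glue'' step entirely, which is the only place in your sketch where real work beyond pullback pasting is needed.
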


\begin{proof}
    First, the second pullback implies that every square with top face maps \revised{$d_\top : B_{i,j+1} \to B_{i,j}$} is a pullback.
Indeed, in the cube
\begin{center}
  \begin{tikzcd}
      & B_{0,0}  & & B_{0,1} \arrow[ll, "d_{\top}"']\\
     B_{0,1} \ar[ur, "d_\bot"] & & B_{0,2}  \ar[ur, "d_\bot"] &\\
      & B_{1,0} \arrow[uu, "e_\top"',near start] \arrow[from=ur, to=ul, crossing over, "d_{\top}"',very near start] & & B_{1,1}, \arrow[uu, "e_\top"'] \arrow[ll, "d_{\top}"',very near start] \\
     B_{1,1} \arrow[uu, "e_\top",near start] \ar[ur, "d_\bot"]& & B_{1,2} \arrow[uu, crossing over, "e_\top"',near start] \arrow[ll, "d_{\top}"] \ar[ur, "d_\bot"']&
  \end{tikzcd}
\end{center}
the top and bottom squares are pullbacks because every row is Segal, and the back square is a pullback by hypothesis. Thus the rectangle consisting of bottom and back is a pullback because bottom and back squares are; front is a pullback because top and rectangle are.
By induction, suppose the squares
\begin{center}
  \begin{tikzcd}
    B_{i-1,j-1} 
        & B_{i-1,j}  \arrow[l, "d_\top"'] \\
    B_{i,j-1} \arrow[u, "e_\top"]  
        & B_{i,j} \arrow[l, "d_\top"] 
                  \arrow[u, "e_\top"'] 
                  \arrow[ul, phantom, "\ulcorner", very near start]
  \end{tikzcd}
\end{center}
are pullback, we can form cubes with the top and bottom faces pullbacks thanks to the Segal condition, and the back square is a pullback by hypothesis.
This proves that every square involving top face maps are pullbacks. Starting with the first pullback, we prove in the same way that every square involving bottom face maps are pullbacks.

Now we want to prove that the following square is a pullback, for $0 < k < i$,
\begin{center}
  \begin{tikzcd}
    B_{i-1,j-1}
        & B_{i-1,j}  \arrow[l, "d_\top"'] \\
    B_{i,j-1} \arrow[u, "e_k"]  
        & B_{i,j} \arrow[l, "d_\top"] 
                  \arrow[u, "e_k"'].
  \end{tikzcd}
\end{center}
\revised{
If $k=i-r$, we postcompose vertically with $(e_\top)^r$: 
}
\begin{center}
  \begin{tikzcd}
     B_{k-1,j-1} & B_{k-1,j} \ar[l, "d_\top"']
     \\
     B_{i-1,j-1} \ar[u, "(e_\top)^r"] & B_{i-1,j} \ar[u, "(e_\top)^r"'] \ar[l, "d_\top"]
     \\
     B_{i,j-1} \ar[u, "e_k"] & B_{i,j} \ar[u, "e_k"'] \ar[l, "d_\top"]
  \end{tikzcd} 
\end{center}
\revised{
Then the vertical composite is equivalent to $e_\top \circ (e_\top)^r$ (by face map identities), so both the rectangle and the upper square are  pullbacks by assumption, and therefore by Lemma~\ref{prismlemma}, the lower square is a pullback too, as required. 
}



We can do the same proof with bottom face maps. We can also replace $d_\top$ in the new previous pullback squares and obtain the remaining pullbacks involving the face maps.

For squares with face and degeneracy maps, we use the following strategy: in the diagram

\begin{center}
    \begin{tikzcd}
        B_{00} \arrow[r, "s_0"] & B_{01} \arrow[r, "d_\bot"] & B_{00}\\
        B_{10} \arrow[r, "s_0"'] \arrow[u, "e_{\bot}"] & B_{11} \arrow[r, "d_\bot"'] \arrow[u, "e_{\bot}"'] & B_{10} \arrow[u, "e_{\bot}"'],
    \end{tikzcd}
\end{center}
the map $s_0$ is a section of $d_1$, then the long edge is an identity. The right-hand square is a pullback (it is one of the two pullback in the hypothesis). Thus the left-hand square is a pullback.
We can proceed in the same way for the other degeneracy maps.

There remains the case of squares involving only degeneracy:
\begin{center}
\begin{tikzcd}
    B_{ij} \arrow[r, "s_0"] 
       \arrow[d, "t_k"'] & B_{i,j+1} \arrow[d, "t_k"]\\
    B_{i+1, j} \arrow[r, "s_0"'] & B_{i+1, j+1}.
\end{tikzcd}
\end{center}
We again glue on the right a square with face map such that the long edge is an identity and use once again the Lemma \ref{prismlemma}.
\end{proof}


\subsection{Bicomodules}
\label{sub:bicomodules}

\begin{thm}\label{thm:bicomodule}
Let $B$ be an augmented stable double Segal space, and such that the augmentation maps are culf.
Suppose moreover $X := B_{\bullet,-1}$ and $Y:= B_{-1,\bullet}$ are decomposition spaces.
Then the spans
 \[B_{0,0}  \xleftarrow{e_1} B_{1,0} 
    \xrightarrow{(u,e_0)} X_{1} \times B_{0,0}
\]
and
\[B_{0,0}  \xleftarrow{d_0} B_{0,1} 
    \xrightarrow{(d_1,v)} B_{0,0} \times Y_{1}
\]
induce on $\infGrpd_{/B_{0,0}}$ the structure of a bicomodule over $\infGrpd_{/X_1}$ and $\infGrpd_{/Y_1}$  (at the $\pi_0$ level).
\end{thm}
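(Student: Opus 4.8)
The plan is to split the statement into three pieces: the left $\infGrpd_{/X_1}$-comodule structure, the right $\infGrpd_{/Y_1}$-comodule structure, and the bicomodule compatibility relating the two. The first two pieces are essentially an application of Proposition~\ref{comodule}; the stability hypothesis is needed only for the compatibility, which is where the real work sits.

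For the left comodule, I would invoke Proposition~\ref{comodule} with $C := B_{\bullet,0}$ (the $0$-th column) and the simplicial map $u \colon B_{\bullet,0} \to B_{\bullet,-1} = X$. Here $B_{\bullet,0}$ is Segal because $B$ is a double Segal space, $u$ is culf because the augmentation maps are assumed culf, and $X$ is a decomposition space by hypothesis; one checks that the left-comodule span output by Proposition~\ref{comodule} is exactly $B_{0,0}\xleftarrow{e_1}B_{1,0}\xrightarrow{(u,e_0)}X_1\times B_{0,0}$. Symmetrically, Proposition~\ref{comodule} applied to the $0$-th row $B_{0,\bullet}$ and the culf map $v \colon B_{0,\bullet} \to B_{-1,\bullet} = Y$ produces the right $\infGrpd_{/Y_1}$-coaction $\gamma_r$ with the stated span. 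So $\gamma_l$ and $\gamma_r$ equip $\infGrpd_{/B_{0,0}}$ with left and right comodule structures at the $\pi_0$ level.

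It remains to check the bicomodule axiom: that the two composites $(\Id\otimes\gamma_r)\circ\gamma_l$ and $(\gamma_l\otimes\Id)\circ\gamma_r$ from $\infGrpd_{/B_{0,0}}$ to $\infGrpd_{/X_1\times B_{0,0}\times Y_1}$ (identity on the $\infGrpd_{/X_1}$- resp.\ $\infGrpd_{/Y_1}$-factor) agree up to homotopy. By Beck--Chevalley each composite is the linear functor of a composite span, computed by a pullback: for the first composite this is the pullback of $e_0\colon B_{1,0}\to B_{0,0}$ along $d_0\colon B_{0,1}\to B_{0,0}$, and for the second it is the pullback of $e_1\colon B_{1,0}\to B_{0,0}$ along $d_1\colon B_{0,1}\to B_{0,0}$. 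These are precisely the two squares in Lemma~\ref{stablelemma}, so stability identifies both pullbacks with $B_{1,1}$, equipped with the evident projections. Reading off the legs, both composites become the span
\[
    B_{0,0}\xleftarrow{\,e_1 d_0\,}B_{1,1}\xrightarrow{\,(u\,d_0,\; d_1 e_0,\; v\,e_0)\,}X_1\times B_{0,0}\times Y_1,
\]
where one uses the commutation of face maps in the two directions ($e_1 d_0 = d_0 e_1$, $d_1 e_0 = e_0 d_1$) and the augmentation identities $u\,d_0 = u\,d_1$, $v\,e_0 = v\,e_1$ (valid since $B_{1,\bullet}$ and $B_{\bullet,1}$ are augmented simplicial $\infty$-groupoids) to match the legs coming from the other composite. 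The two spans coincide, hence the two functors are homotopic.

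The main obstacle is the span-composition bookkeeping in the last step: one must lay out the relevant $3{\times}3$-style diagram of spans (in the style of the proof of Proposition~\ref{comodule}), track which face map plays which role in each composite, and match the two arising pullbacks to the two squares of Lemma~\ref{stablelemma}. Conceptually there is no difficulty --- the content is exactly that stability provides the pullbacks needed so that coacting on $B_{0,0}$ first from $X$ and then from $Y$, or in the other order, both land on the common object $B_{1,1}$. As in Proposition~\ref{comodule}, everything is claimed only at the $\pi_0$ level, so coherence of these equivalences is not addressed, and the Beck--Chevalley equivalences are used as black boxes.
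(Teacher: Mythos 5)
Your proposal is correct and follows essentially the same route as the paper: the two comodule structures come from Proposition~\ref{comodule} applied to the zeroth column and row with their culf augmentations, and the compatibility is exactly the statement that the two composite coaction spans are both computed by a pullback which stability identifies with $B_{1,1}$ (the two squares with $(d_0,e_0)$ and $(d_1,e_1)$), with the legs matched via the bisimplicial and augmentation identities. The paper packages this as a $3\times 3$ span diagram whose upper-right and lower-left squares are shown to be pullbacks by projecting onto the relevant factor, but the mathematical content is identical to your direct span composition.
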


A bisimplicial $\infty$-groupoid satisfying the conditions of the theorem is called a \emph{bicomodule configuration}.

\revised{
\begin{rmk}
  In analogy with Remark~\ref{rmk:comodule}, the notion of bicomodule configuration can be broken up into steps. First, for any double Segal space $B$, since the zeroth column $B_{\bullet,0}$ is a Segal space, $\infGrpd_{/B_{0,0}}$ is a left comodule over $\infGrpd_{/B_{1,0}}$, and similarly $\infGrpd_{/B_{0,0}}$ is a right comodule over $\infGrpd_{/B_{0,1}}$.  It is now the stability of $B$ that expresses the bicomodule condition.  From here, a culf augmentation on the left to a decomposition space $X$ induces a coalgebra homomorphism, and a culf augmentation on the right to a decomposition space $Y$ induces another coalgebra homomorphism, and coextension of coscalars along these coalgebra homomorphisms makes $\infGrpd_{/B_{0,0}}$ an $\infGrpd_{/X_1}$-$\infGrpd_{/Y_1}$ bicomodule.  This viewpoint might well be useful in the proof of full coherence.
\end{rmk}

\begin{rmk}
    As for the Proposition~\ref{comodule}, the theorem is stated at the $\pi_0$-level. It is most likely that the full coherence can be established using the techniques employed in \cite{GKT1} and \cite{Penney}. It is expected that all the stability pullbacks are required for the fully coherent result.
For the present purposes, we are going to take homotopy cardinality anyway, and for that, coherence is not essential.
\end{rmk}
}

\begin{proof}
The left and right comodule structures were established in Proposition~\ref{comodule}.
The desired homotopy coherent diagram 
\begin{center}
  \begin{tikzcd}[sep=huge]
    \infGrpd_{/B_{0,0}} \ar[r, "\gamma_l"] \ar[d, "\gamma_r"'] 
      & \infGrpd_{/B_{1,-1} \times B_{0,0}} \ar[d, "\Id_{} \otimes \gamma_r"] \\
    \infGrpd_{/B_{0,0} \times B_{-1,1}} \ar[r, "\gamma_l \otimes \Id_{}"'] 
      & \infGrpd_{/B_{1,-1} \times B_{0,0} \times B_{-1,1}}
  \end{tikzcd}
\end{center}
is induced by the solid spans in the diagram

\begin{center}
  \begin{tikzcd}[sep=huge]
    B_{0,0} 
      & B_{1,0} \ar[l, "e_1"'] 
                              \ar[r, "{(u,e_0)}"]
      & B_{1,-1} \times B_{0,0}\\
    B_{0,1} \ar[u, "d_0"] 
                          \ar[d, "{(d_1,v)}"'] 
      & B_{1,1} \ar[l, dashed, "e_1"] 
                              \ar[d, dashed, "{(d_1,v e_1)}"] 
                              \ar[r, dashed, "{(u d_0,e_0)}"'] 
                              \ar[u, dashed, "d_0"]
                              \ar[ur, phantom, "\urcorner", very near start]
                              \ar[dl, phantom, "\llcorner", very near start]
      & B_{1,-1} \times B_{0,1} \ar[u, "\Id \otimes d_0"'] 
                                          \ar[d, "\Id \otimes {(d_1,v)}"]\\
      B_{0,0} \times B_{-1,1}
       &  B_{1,0} \times B_{-1,1} \ar[l, "{e_1 \otimes \Id }"]  \ar[r, "{{(u, e_0)} \otimes \Id}"']
       & B_{1,-1} \times B_{0,0} \times B_{-1,1}.
  \end{tikzcd}
\end{center}
The homotopy commutativity of the squares follows one again from the new augmentation simplicial identities.
The upper-right hand square is a pullback if and only if its composite with the second projection is a pullback and, similarly, the lower-left hand square is a pullback if and only if its composite with the first projection is a pullback.
These composite outer squares are pullbacks due to the stability condition.
\end{proof}

\revised{
\begin{ex}
In analogy with Example~\ref{exDec}, given a decomposition space $X$, let $B$ be the total decalage of $X$. (Its zeroth column is $\Dec_\top(X)$ and its zeroth row is $\Dec_\bot(X)$. With its natural augmentation maps, this becomes a bicomodule configuration, realising the coalgebra of $X$ as a bicomodule over itself.
\end{ex}
}


\revised{
\subsection{Augmented double Segal space of layered sets and posets}\label{sec:layeredsetsposets}

The following example is developped in \cite{Ca:mdrs}, where details of the general construction and proofs can be found.

Suppose $\ds{I}$ is the decomposition space of layered finite sets and $\ds{C}$ is the decomposition space of layered finite posets.
We obtain a bisimplicial groupoid $\ds{B}$ with the following explicit description: the groupoid $\ds{B}_{i,j}$ consists of pairs of layerings $(S \to \underline{i},P \to \underline{j{+}1})$ where $S$ is a finite set, and $P$ is a finite poset.
For example, $\ds{B}_{0,0}$ is the groupoid of $1$-layered finite posets.
The horizontal face maps (taking place only on the $(j{+}1)$-layered finite poset part) are given by:
\begin{itemize}
    \item $d_k:\ds{B}_{i,j} \to \ds{B}_{i,j-1}$ joins the layers $(k{+}1)$ and $(k{+}2)$ of the poset, for all $j >0$ and $0 \le k \le j-1$;
    \item  $d_{\top} = d_{j}: \ds{B}_{i,j} \to \ds{B}_{i,j-1}$ deletes the last layer.
\end{itemize}
Horizontal degeneracy maps are given by inserting empty layers:
$s_k: \ds{B}_{i,j} \to \ds{B}_{i, j+1}$ inserts an empty $(k{+}2)$nd layer in the poset, for all $j\ge 0$ and $0\le k \le j$.

The vertical face maps are given by:
\begin{itemize}
    \item $e_\bot = e_0: \ds{B}_{i,j} \to \ds{B}_{i-1, j}$ deletes the first layer of the set, for all $i > 0$;
    \item $e_k: \ds{B}_{i,j} \to \ds{B}_{i-1, j}$ joins the layers $k$ and $k{+}1$ of the set, for all $0 < k < i$.
    \item $e_\top = e_i: \ds{B}_{i,j} \to \ds{B}_{i-1, j}$ joins the last layer of the set and the first layer of the poset into the first layer of the poset.
\end{itemize}
Vertical degeneracy maps are given by inserting empty layers:  $t_k: \ds{B}_{i,j} \to \ds{B}_{i+1, j}$ inserts an empty $(k{+}1)$st layer to the set, for all $0 \le k \le i$.
The augmentation maps are  
$u: \ds{B}_{i,0} \to \ds{I}_i$ deleting the whole $1$-layered poset and
$v: \ds{B}_{0, j} \to \ds{C}_j$ deleting the first layer of the poset.
It should be noted that the row $\ds{B}_{0,\bullet}$ is the lower décalage of $\ds{C}$, that $v$ is the décalage map given by the original $d_0$, and that $u$ is the augmentation map that décalage always have.

\begin{prop}[{\cite[Proposition~2.14]{Ca:mdrs}}]
    With augmentations maps $u$ and $v$, the bisimplicial groupoid $\ds{B}$ is a bicomodule configuration.
\end{prop}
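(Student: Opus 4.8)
The plan is to verify the four hypotheses of Theorem~\ref{thm:bicomodule}: that $\ds{B}$ is an augmented double Segal space, that it is stable, that the augmentation maps $u$ and $v$ are culf, and that $X := \ds{B}_{\bullet,-1}$ and $Y := \ds{B}_{-1,\bullet}$ are decomposition spaces. The fourth point is immediate from the explicit description, since $\ds{B}_{i,-1} = \ds{I}_i$ and $\ds{B}_{-1,j} = \ds{C}_j$, so that $X = \ds{I}$ and $Y = \ds{C}$, which are decomposition spaces (indeed $\ds{I}$ is a Segal space) by the propositions recalled in Section~\ref{sub:incidencecoalg}. For the double Segal condition together with stability, the cleanest route is to exhibit $\ds{B}$ as a base change of the total décalage of $\ds{C}$. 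Writing $\Dec(\ds{C})_{i,j} = \ds{C}_{i+j+1}$, which is a bicomodule configuration and hence a stable double Segal space (cf.\ the example of the total décalage above), there is a map of augmented bisimplicial groupoids from $\Dec(\ds{C})$ to the bisimplicial groupoid $(i,j)\mapsto\ds{C}_i$, given at $(i,j)$ by the restriction $\ds{C}_{i+j+1}\to\ds{C}_i$ onto the first $i$ layers; base-changing this along the level-wise inclusion $(i,j)\mapsto\ds{I}_i$ of layered sets into layered posets produces precisely $\ds{B}$, because an $(i{+}j{+}1)$-layered poset whose restriction to its first $i$ layers is discrete is the same datum as an $i$-layered finite set together with a $(j{+}1)$-layered finite poset (with arbitrary relations from the former up into the latter). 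Every square entering the double Segal or the stability conditions for the two auxiliary bisimplicial groupoids $(i,j)\mapsto\ds{C}_i$ and $(i,j)\mapsto\ds{I}_i$ has an edge that is an identity, since these objects are constant in the second variable, and hence is a pullback; the corresponding squares of $\Dec(\ds{C})$ are pullbacks since it is a stable double Segal space; and a level-wise fibre product of pullback squares is again a pullback square (by iterated application of Lemma~\ref{prismlemma}). Thus $\ds{B}$ is an augmented stable double Segal space.

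One can also argue by hand: the rows are Segal because the zeroth row $\ds{B}_{0,\bullet}$ is the lower décalage $\Dec_\bot\ds{C}$, which is always a Segal space, and the higher rows differ from it only by a fixed block of $i$ layers forming an antichain that the horizontal face and degeneracy maps leave untouched; the columns are Segal because an $i$-layering of a finite set amounts to a chain of $i$ composable $2$-layerings, which is precisely the Segal condition for $\ds{I}$ carried along the poset anchor; and then, by Lemma~\ref{stablelemma}, stability reduces to checking the two explicit pullback squares on $\ds{B}_{0,0},\ds{B}_{0,1},\ds{B}_{1,0},\ds{B}_{1,1}$ — one for the pair $(d_0,e_0)$, one for $(d_1,e_1)$ — directly from the face maps, by verifying that in each case the comparison functor from $\ds{B}_{1,1}$ to the pullback groupoid is an equivalence. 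For the augmentation maps: $v\colon\ds{B}_{0,\bullet}=\Dec_\bot\ds{C}\to\ds{C}$ is the lower décalage map (the original $d_0$ of $\ds{C}$), which is culf because the lower décalage map of a decomposition space always is; and $u\colon\ds{B}_{\bullet,0}\to\ds{I}$, which forgets the bottom $1$-layered poset, is culf because it is a base change of the (culf) décalage augmentation of $\Dec(\ds{C})$ along the inclusion above, or equivalently because its naturality squares over the active vertical maps (inner joinings of set-layers and the vertical degeneracies) are pullbacks, those maps acting only on the set part.

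The step I expect to be the main obstacle is making the base-change identification of $\ds{B}$ with the fibre product of $\Dec(\ds{C})$ and the bisimplicial groupoid $(i,j)\mapsto\ds{I}_i$ over $(i,j)\mapsto\ds{C}_i$ precise on the nose — in particular being careful about which order relations between the ``set part'' and the ``poset part'' are present, and about how the face map $e_\top$, which merges the top set-layer into the bottom poset-layer, interacts with them — and then confirming that the three families of pullback squares really paste as claimed, including in the augmented degrees. If instead one follows the direct route, the delicate point is the $(d_1,e_1)$ stability square: it works because the set-layer is an antichain and hence automatically a down-set, so that a $2$-layering of the merged poset restricts and extends in exactly one way. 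The identification of $X$ and $Y$ and the culf conditions, by contrast, are routine.
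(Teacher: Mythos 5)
First, note that the paper does not actually prove this statement itself: it is quoted from \cite[Proposition~2.14]{Ca:mdrs}, so your proposal can only be judged against the framework of Theorem~\ref{thm:bicomodule} and the explicit description of $\ds{B}$ given in Section~\ref{sec:layeredsetsposets}. Your overall strategy (check the four hypotheses of Theorem~\ref{thm:bicomodule}), the identification $X=\ds{I}$, $Y=\ds{C}$, and the culfness of $v$ as a d\'ecalage map are all fine. The gap is in the base-change argument for the double Segal and stability conditions. You claim that every relevant square of the auxiliary bisimplicial groupoid $(i,j)\mapsto \ds{C}_i$ is a pullback ``since it has an identity edge''. This fails twice over: a square with a single identity edge need not be a pullback, and, more seriously, the \emph{vertical} Segal squares of $(i,j)\mapsto\ds{C}_i$ contain no identity edges at all --- they are exactly the Segal squares of $\ds{C}$, and $\ds{C}$ is \emph{not} a Segal space (the paper states this explicitly). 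Since a levelwise limit of squares is a pullback only when the corresponding squares are pullbacks in all three vertices of the cospan, the column Segal condition for $\Dec(\ds{C})\times_{\ds{C}}\ds{I}$ does not follow from your argument. Nor is this merely presentational: the fibre product genuinely has non-Segal columns. Concretely, for the poset $\{a<b\}$ the pair consisting of the $2$-layering $(\{a\},\{b\})$ and the $2$-layering $(\{b\},\emptyset)$ of its second layer lies in the Segal pullback $\ds{B}_{1,0}\times_{\ds{B}_{0,0}}\ds{B}_{1,0}$, but reassembles to the $3$-layering $(\{a\},\{b\},\emptyset)$ whose first two layers are each discrete yet not \emph{jointly} discrete, so it is not in the fibre-product $\ds{B}_{2,0}$.

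This points to the deeper issue you flagged but did not resolve: your two descriptions of $\ds{B}_{i,j}$ --- the literal pair $(S\to\underline{i},\,P\to\underline{j{+}1})$ of the paper, versus the base-change object ``pair with arbitrary upward relations from $S$ into $P$'' --- are \emph{different} groupoids, and they fail complementary halves of the hypotheses. The literal product $\ds{I}_i\times\ds{C}_{j+1}$ has Segal columns but fails the $(d_\top,e_\top)$ stability square: again for $\{a<b\}$, the element of $\ds{B}_{1,0}\times_{\ds{B}_{0,0}}\ds{B}_{0,1}$ given by $(S,P_1)=(\{a\},\emptyset)$ and the $2$-layering $(\{a\},\{b\})$ does not lift to $\ds{I}_1\times\ds{C}_2$, since $a$ is not isolated in $\{a<b\}$. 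The version with upward relations repairs that square (your ``antichain is automatically a down-set'' remark is correct there) but, as above, breaks the column Segal condition. Your proof asserts both readings interchangeably and checks each condition against whichever reading makes it true. What is actually needed --- and what must be the real content of \cite[Proposition~2.14]{Ca:mdrs} --- is a single precise definition of the groupoids $\ds{B}_{i,j}$ and of the face maps $e_k$, $e_\top$, against which \emph{both} the column Segal squares and the two stability squares of Lemma~\ref{stablelemma} are verified. As it stands, neither of your two routes supplies this.
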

 }

\section{Correspondences, fibrations, and adjunctions}
\label{sec:adj}

\subsection{Decomposition space correspondences}
\label{sub:corresp}


A \emph{correspondence} is by definition a decomposition space $\mathcal{M}$ with a map to the \revised{$1$-simplex $\Delta^1$}.
We consider the slice $\infty$-category $\Cat_{\infty/\Delta^1}$. It contains in particular $\simplexcat_{/\Delta^1}$, whose objects are $[i,j]$, see Remark~\ref{shape1}.
There is now a natural notion of nerve in this context.
Given a correspondence \revised{$p: \mathcal{M} \to \Delta^1$}, the relative nerve 
$N_{\Delta^1}: \Cat_{\infty/\Delta^1} \to \Fun((\simplexcat_{/\Delta^1})\op, \infGrpd)$ of $p$ is the augmented bisimplicial $\infty$-groupoid given by 
$B_{i,j} := N_{\Delta^1}(p)_{i,j} = \Map_{/\Delta^1}([i,j], p)$, where $[i,j]$ is given by the map $\Delta^{i+1+j} \to \Delta^{1}$ sending the $i+1$ first vertices to $0$ and the others to $1$. It is allowed for $i$ or $j$ to be equal to $-1$ but not both.

From the nerve definition, the following square is a standard mapping-space fibre sequence for slices:
\begin{center}
\begin{tikzcd}\label{Bij}
    B_{i,j} \arrow[r, ""] 
       \arrow[d, ""'] \arrow[dr, phantom, "\lrcorner", very near start] & \Map(\Delta^{i+1+j}, \mathcal{M}) \arrow[d, "\text{post }p"]\\
    1 \arrow[r, "\name{[i,j]}"'] & \Map(\Delta^{i+1+j}, \Delta^1).
\end{tikzcd}
\end{center}




\begin{prop}\label{runex}
    Given a decomposition space correspondence $p: \mathcal{M} \to \Delta^1$, the bisimplicial $\infty$-groupoid $B$ described above enjoys the following properties:
\begin{enumerate}
    \item it is Segal in both directions;
    \item it is stable;
    \item it is augmented; 
    \item these augmentations are culf.
\end{enumerate}
\end{prop}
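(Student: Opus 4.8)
The plan is to derive all four properties from the mapping-space fibre sequence displayed just above the statement, together with two soft closure facts. That fibre sequence exhibits $B_{i,j}$ as the fibre of $\mathcal M_{i+1+j}=\Map(\Delta^{i+1+j},\mathcal M)$ over the point $\name{[i,j]}$ of $\Map(\Delta^{i+1+j},\Delta^1)=(\Delta^1)_{i+1+j}$; since $\Delta^1$ is a $0$-truncated simplicial space this base $\infty$-groupoid is discrete, so $B_{i,j}$ is simply a union of connected components of $\mathcal M_{i+1+j}$. The two facts are: \emph{(a)} Segal spaces and decomposition spaces each form a full subcategory of $\Fun(\simplexcat\op,\infGrpd)$ closed under limits — each is cut out by asking certain squares to be pullbacks, a levelwise limit condition — hence closed under pullback; and likewise ``being cartesian on a prescribed class of arrows'' is stable under pullback. \emph{(b)} A commuting square of $\infty$-groupoids that arises as the cornerwise fibre of a pullback square over a compatible choice of vertices in a second pullback square is again a pullback (pullback squares are limit cones, preserved by base change). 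I will also use that $\Delta^1$, being the nerve of $[1]$, is a Segal space, hence a decomposition space. Property (3) is then immediate, since $B$ is by construction a presheaf on $(\simplexcat_{/\Delta^1})\op$, which by Remark~\ref{shape1} is exactly the shape of an augmented bisimplicial $\infty$-groupoid.

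For (1): for fixed $i\ge 0$, the row $B_{i,\bullet}$ — with its simplicial structure in the second variable — is the levelwise fibre of the iterated décalage map $\Dec_\bot^{i+1}(\mathcal M)\to\Dec_\bot^{i+1}(\Delta^1)$ over the vertex $\name{[i,0]}$: the second-coordinate structure maps of $B$ act only on the ``white'' block of $[i,\bullet]$, which is precisely what $\Dec_\bot^{i+1}$ records, and the degeneracies of $\name{[i,0]}$ are the $\name{[i,j]}$. Since $\mathcal M$ is a decomposition space, $\Dec_\bot(\mathcal M)$ is a Segal space (\cite[Prop.~4.9]{GKT1}, cf.\ Example~\ref{exDec}), hence again a decomposition space, so by induction $\Dec_\bot^{i+1}(\mathcal M)$ is Segal; likewise $\Dec_\bot^{i+1}(\Delta^1)$. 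By (a), $B_{i,\bullet}$ is Segal. Symmetrically, for fixed $j\ge 0$ the column $B_{\bullet,j}$ is the levelwise fibre of $\Dec_\top^{j+1}(\mathcal M)\to\Dec_\top^{j+1}(\Delta^1)$ over $\name{[0,j]}$, Segal for the same reason; this is the Segal condition in both directions. (Taking instead $i=-1$, resp.\ $j=-1$, the same closure fact (a) shows $X=\mathcal M\times_{\Delta^1}\{0\}$ and $Y=\mathcal M\times_{\Delta^1}\{1\}$ are decomposition spaces, which is the remaining input needed to apply Theorem~\ref{thm:bicomodule}.)

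For (2): $B$ is a double Segal space by (1), so by Lemma~\ref{stablelemma} it suffices to show that the two squares listed there — with corners $B_{0,0},B_{0,1},B_{1,0},B_{1,1}$ — are pullbacks. Each of these is the cornerwise fibre, over the compatible vertices $\name{[0,0]},\name{[0,1]},\name{[1,0]},\name{[1,1]}$, of the square of mapping groupoids $\Map(-,\mathcal M)$ — and of the square of $\Map(-,\Delta^1)$ — induced by the corresponding square in $\simplexcat_{/\Delta^1}$. Reading off the black/white-dot pictures of Remark~\ref{shape1}, the underlying square in $\simplexcat$ is in both cases a pushout of an active map along an inert map: for the $(d_0,e_0)$-square it is $[3]\cong[2]\cup_{[1]}[2]$ assembled from the inner (hence active) coface $[1]\to[2]$ pointing to $[0,1]$ and the inert coface $d^\bot\colon[1]\to[2]$ pointing to $[1,0]$; for the $(d_1,e_1)$-square it is the analogous pushout with $d^\top$ in place of $d^\bot$. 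Since $\mathcal M$ — and $\Delta^1$ — is a decomposition space, it carries such pushouts to pullbacks, so by (b) both $B$-squares are pullbacks; hence $B$ is stable.

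For (4): to see that $u\colon B_{\bullet,0}\to B_{\bullet,-1}=X$ is culf I verify it is cartesian on every active map $\alpha\colon[m]\to[n]$. The relevant naturality square of $u$ is the cornerwise fibre, over the compatible vertices $\name{[n,-1]},\name{[n,0]},\name{[m,-1]},\name{[m,0]}$, of the square of $\Map(-,\mathcal M)$'s (and of $\Map(-,\Delta^1)$'s) induced by the square in $\simplexcat_{/\Delta^1}$ whose underlying square in $\simplexcat$ has the active maps $\alpha$ and $\alpha^+$ (the extension of $\alpha$ sending the new top vertex to the new top vertex) as one pair of opposite edges and the inert top cofaces $d^\top\colon[m]\to[m+1]$, $[n]\to[n+1]$ as the other; this square is the pushout $[n+1]\cong[n]\cup_{[m]}[m+1]$ of the active $\alpha$ along the inert $d^\top$. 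As before $\mathcal M$ and $\Delta^1$ send it to a pullback, whence the naturality square of $u$ is a pullback and $u$ is culf; the symmetric argument with $\Dec_\bot$ and $d^\bot$ handles $v\colon B_{0,\bullet}\to B_{-1,\bullet}=Y$. The only genuine work is the bookkeeping in (2) and (4) — translating the bisimplicial face and augmentation maps into $\simplexcat$-cofaces under the identification of the shape with $(\simplexcat_{/\Delta^1})\op$ and checking that the resulting squares really are pushouts of active maps along inert ones — after which facts (a), (b), ``the décalage of a decomposition space is Segal'', and ``a decomposition space sends active-along-inert pushouts to pullbacks'' assemble the proof formally; I expect this index chase, not any conceptual point, to be where care is needed.
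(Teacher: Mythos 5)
Your proposal is correct, and its backbone coincides with the paper's: every square of $B$'s to be checked is realised as the cornerwise fibre, over compatible points of the discrete spaces $\Map(\Delta^k,\Delta^1)$, of a square of $\Map(\Delta^k,\mathcal M)$'s which is a pullback by the decomposition-space axioms for $\mathcal M$ (and for $\Delta^1$), and is then itself a pullback by a fibre-transfer lemma. The packaging differs in two places. For (1) the paper verifies the Segal squares directly via its Lemma~\ref{pbkfibresq}, after re-expressing $B_{n,i}$ as a fibre over $\name{\id}\in\Map(\Delta^1,\Delta^1)$ so that the base map is an equivalence; you instead exhibit the rows and columns as pullbacks $\Dec_\bot^{i+1}(\mathcal M)\times_{\Dec_\bot^{i+1}(\Delta^1)}1$ (resp.\ the $\Dec_\top$ analogue) and invoke closure of Segal spaces under limits together with the fact that décalage of a decomposition space is Segal — a clean shortcut resting on the same underlying pullbacks. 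Your transfer fact (b) asks the base square to be a pullback, which you rightly supply by observing that $\Delta^1$ is a decomposition space, whereas the paper's Lemma~\ref{pbksqfibresq} asks certain base maps to be equivalences; both hypotheses hold here for the same trivial reason, the bases being discrete with contractible chosen components. For (2) and (4) your identification of the relevant squares as active--inert pushouts in $\simplexcat$ (e.g.\ $[n+1]\cong[n]\cup_{[m]}[m+1]$ for culfness on an active $\alpha\colon[m]\to[n]$) is exactly the right bookkeeping and matches what the paper does on generating maps; checking all active maps uniformly is if anything tidier. I see no gap.
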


To prove these properties, we will use the following lemmas. 

\begin{lem}\label{pbkfibresq}
    Given a diagram such that top and bottom are two fibre sequences
    \begin{center}
        \begin{tikzcd}
            F \arrow[r, ""] 
               \arrow[d, ""'] & E \arrow[r, ""] \arrow[d, ""] & B 
               \arrow[d, "q"]\\
            F' \arrow[r, ""'] & E' \arrow[r, ""] & B'.
        \end{tikzcd}
    \end{center}
    If $q$ is an equivalence, then the left-hand square is a pullback.
\end{lem}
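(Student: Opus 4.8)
The plan is to deduce the statement from two applications of the prism lemma, Lemma~\ref{prismlemma}, together with the fact that an equivalence has contractible fibres. Recall that a fibre sequence $F\to E\to B$ amounts to a pullback square
\begin{center}
\begin{tikzcd}
F \arrow[r] \arrow[d] \arrow[dr, phantom, "\lrcorner", very near start] & E \arrow[d] \\
1 \arrow[r, "b"'] & B
\end{tikzcd}
\end{center}
for some point $b\colon 1\to B$, and likewise $F'\to E'\to B'$ is a pullback square over a point $b'\colon 1\to B'$; the hypothesis that the vertical maps form a morphism of fibre sequences means in particular that $q\circ b\simeq b'$ and that the square formed by $E\to B$, $E'\to B'$ and the vertical maps $E\to E'$, $q$ commutes.

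First I would observe that, since $q$ is an equivalence, the commuting square with horizontal maps $b\colon 1\to B$ and $b'\colon 1\to B'$ and vertical maps $\mathrm{id}$ and $q$ is a pullback, because its vertical fibres are the fibres of $q$, hence contractible. Pasting the fibre-sequence square for $F$ on top of it and invoking Lemma~\ref{prismlemma}, the outer rectangle is a pullback; concretely, $F$ is the fibre over $b'$ of the composite $E\to B\xrightarrow{q}B'$, which by commutativity of the morphism of fibre sequences is the same map as $E\to E'\to B'$.

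Then I would form the prism obtained by stacking the square to be shown a pullback on top of the fibre-sequence square for $F'$:
\begin{center}
\begin{tikzcd}
F \arrow[r] \arrow[d] & E \arrow[d] \\
F' \arrow[r] \arrow[d] & E' \arrow[d] \\
1 \arrow[r, "b'"'] & B'
\end{tikzcd}
\end{center}
Its bottom square is a pullback, being the defining square of the fibre sequence $F'\to E'\to B'$, and its outer rectangle is precisely the pullback rectangle produced in the previous paragraph. Applying Lemma~\ref{prismlemma} to this prism (drawn vertically rather than horizontally) then forces the top square, which is exactly the left-hand square of the statement, to be a pullback.

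The argument is entirely formal, and the only point demanding care is the bookkeeping: checking that the chosen basepoint of $B$ is carried by $q$ to that of $B'$ and that the two composites $E\to B\to B'$ and $E\to E'\to B'$ agree. This is exactly where the morphism-of-fibre-sequences hypothesis is used, so I would be careful to spell it out; beyond this I do not expect any genuine obstacle.
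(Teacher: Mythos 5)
Your proposal is correct and is essentially the paper's own argument: the paper arranges the same data as a cube whose front and back faces are the two fibre-sequence pullbacks and whose bottom face is the pullback witnessing that $q$ is an equivalence, then applies Lemma~\ref{prismlemma} twice, exactly matching your two pastings (back$+$bottom $=$ top$+$front). The bookkeeping point you flag about the two composites $E\to B\to B'$ and $E\to E'\to B'$ agreeing is precisely the commutativity of that cube.
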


\begin{proof}
\revised{
In the following cube, the front and back squares are pullbacks by assumption; the bottom one is since $q$ is an equivalence.
}
      \begin{center}
    \begin{tikzcd}
        F \arrow[rr, ""] \arrow[dr, ""'] \arrow[dd, ""'] & & E  \arrow[dd, ""] \arrow[dr, ""'] \arrow[dr, ""]\\
        & F' \arrow[rr, crossing over, ""] && E' \arrow[dd, ""] \\
           1 \arrow[rr, ""] \arrow[dr, ""'] && B \arrow[dr, "q"'] \\
           & 1 \arrow[from=uu, crossing over, ""'] \arrow[rr, crossing over, ""]  && B'.
    \end{tikzcd}
  \end{center}
\revised{
The rectangle consisting of the back square and the bottom square is a pullback by Lemma~\ref{prismlemma}, since both squares are pullbacks. Thus the rectangle consisting of the top square and the front one is. Applying one more time Lemma~\ref{prismlemma}, we conclude the top square is a pullback.
}
\end{proof}

\begin{lem}\label{pbksqfibresq}
    Given a diagram such that horizontal maps form fibre sequences
  \begin{center}
    \begin{tikzcd}[column sep=small]
        F_4 \arrow[rr, ""] \arrow[dr, ""'] \arrow[dd, ""'] & & E_4 \arrow[rr, ""] \arrow[dd, ""] \arrow[dr, ""'] & & B_4 \arrow[dd, "u" near end] \arrow[dr, ""]\\
        & F_3 \arrow[rr, crossing over, ""] && E_3 \arrow[rr, crossing over, ""] && B_3 \arrow[dd, "v" near end]\\
           F_2 \arrow[rr, ""] \arrow[dr, ""'] && E_2 \arrow[dr, ""'] \arrow[rr, ""] & & B_2 \arrow[dr, ""]\\
           & F_1 \arrow[from=uu, crossing over, ""'] \arrow[rr, crossing over, ""]  && E_1 \arrow[from=uu, crossing over, ""'] \arrow[rr, ""] & & B_1.
    \end{tikzcd}
  \end{center}
    Suppose the vertical middle square (involving $E_i, 1\le i\le 4$) is a pullback, and
    suppose $u$ and $v$ are equivalences, then the left vertical square is a pullback.
\end{lem}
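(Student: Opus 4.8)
The plan is to reduce the statement to the two lemmas already at hand: Lemma~\ref{pbkfibresq}, which manufactures pullback squares out of the fibre-sequence data, and the prism lemma (Lemma~\ref{prismlemma}), which extracts the square we want. The desired left vertical square is the one on $F_1, F_2, F_3, F_4$, with horizontal (depth) maps $F_4 \to F_3$, $F_2 \to F_1$ and vertical (downward) maps $F_4 \to F_2$, $F_3 \to F_1$.

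First I would apply Lemma~\ref{pbkfibresq} to the front slab of the diagram, consisting of the two fibre sequences $F_4 \to E_4 \to B_4$ and $F_2 \to E_2 \to B_2$ joined by the downward maps, whose base component is $u$. Since $u$ is an equivalence, the lemma yields that the square on $F_4, E_4, F_2, E_2$ (horizontal maps $F_i \to E_i$, vertical maps downward) is a pullback; call it $\square_{\mathrm{I}}$. Applying the same lemma to the back slab $F_3 \to E_3 \to B_3$, $F_1 \to E_1 \to B_1$, whose base map is the equivalence $v$, shows that the square on $F_3, E_3, F_1, E_1$ is a pullback; call it $\square_{\mathrm{II}}$.

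Next I would paste $\square_{\mathrm{I}}$ to the middle square on the $E_i$ (a pullback by hypothesis) along their shared edge $E_4 \to E_2$, obtaining a rectangle $R$ whose top edge is the composite $F_4 \to E_4 \to E_3$ and whose bottom edge is $F_2 \to E_2 \to E_1$. Being a horizontal composite of two pullbacks, $R$ is again a pullback (Lemma~\ref{prismlemma}). The key observation is that, by commutativity of the two depth-facing faces of the cube (on $F_4, F_3, E_4, E_3$ and on $F_2, F_1, E_2, E_1$), the very same rectangle $R$ is also obtained by pasting the desired left square on $F_4, F_3, F_2, F_1$ to $\square_{\mathrm{II}}$ along the shared edge $F_3 \to F_1$: the two top composites $F_4 \to F_3 \to E_3$ and $F_4 \to E_4 \to E_3$ coincide, and likewise on the bottom.

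Finally I would apply the prism lemma to this second decomposition: in the prism with rows $F_4 \to F_3 \to E_3$ and $F_2 \to F_1 \to E_1$, the right-hand square is $\square_{\mathrm{II}}$, which is a pullback, and the outer rectangle is $R$, which is a pullback; hence the left-hand square, namely the square on $F_1, F_2, F_3, F_4$, is a pullback, as required. The one point demanding care---and the main, if modest, obstacle---is the bookkeeping verifying that the two pastings yield literally the same rectangle $R$, that is, that the relevant faces of the three-dimensional diagram commute; once that is in place, the conclusion is a purely formal two-step application of Lemma~\ref{prismlemma}.
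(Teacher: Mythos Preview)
Your proof is correct and follows essentially the same approach as the paper: use Lemma~\ref{pbkfibresq} with the equivalences $u$ and $v$ to get the front and back faces of the left cube as pullbacks, then apply the prism lemma twice (pasting the front face with the middle $E$-square, and comparing with the pasting of the desired left square with the back face). You have spelled out in detail exactly what the paper compresses into the phrase ``apply Lemma~\ref{prismlemma} twice, as in the proof of Lemma~\ref{pbkfibresq}''.
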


\revised{
\begin{proof}
    By Lemma~\ref{pbkfibresq}, since $u$ and $v$ are equivalences, the front and back squares of the left cube of the diagram are pullbacks. We conclude by applying Lemma~\ref{prismlemma} twice, as in the proof of Lemma~\ref{pbkfibresq}.
\end{proof}
}

\begin{proof}[Proof of Proposition~\ref{runex}]
(1) Segal in both directions means: for any $i$, the squares
\begin{center}
    \begin{tikzcd}
        B_{n+1,i} \arrow[dr, phantom, "\lrcorner", very near start] \arrow[r, "e_0"] 
           \arrow[d, "e_{n+1}"'] & B_{n,i} \arrow[d, "e_n"]\\
         B_{n,i} \arrow[r, "e_0"'] & B_{n-1,i},
    \end{tikzcd}
    \quad
    \begin{tikzcd}
        B_{i,n+1} \arrow[dr, phantom, "\lrcorner", very near start] \arrow[r, "d_0"] 
           \arrow[d, "d_{n+1}"'] & B_{i,n} \arrow[d, "d_n"]\\
         B_{i,n} \arrow[r, "d_0"'] & B_{i,n-1}.
    \end{tikzcd}
\end{center}
are pullbacks.

The $\infty$-groupoids $B_{n,i}$, for $n \ge 0$ are also given by the following fibre sequences:
\begin{center}
\begin{tikzcd}
    B_{n,i} \arrow[r, ""] 
       \arrow[d, ""'] \arrow[dr, phantom, "\lrcorner", very near start] & \Map(\Delta^{n+1+i}, \mathcal{M}) \arrow[d, "R_{n+1+i}"]\\
    1 \arrow[r, "\name{\id}"'] & \Map(\Delta^1, \Delta^1),
\end{tikzcd}
\end{center}
the right-hand map $R_{n+1+j}$ sends $\sigma \in \Map(\Delta^{n+1+j}, \mathcal{M})$ to $p \circ \sigma \circ \rho_{n+1}$ where 
$\rho_{n+1} : \Delta^1 \to \Delta^{n+1+j}$ maps the arrow in $\Delta^1$ to the $(n+1)$st edge of $\Delta^{n+1+j}$, that is $\rho_{n+1} = (d^\bot)^{n}(d^\top)^{j}$.
Indeed, in the diagram
\begin{center}
\begin{tikzcd}
    B_{i,j} \arrow[r, ""] 
       \arrow[d, ""'] \arrow[dr, phantom, "\lrcorner", very near start] & \Map(\Delta^{i+1+j}, \mathcal{M}) \arrow[d, "\text{post }p"]\\
           1 \arrow[r, "\name{\beta_{i,j}}"'] \arrow[d, ""']& \Map(\Delta^{i+1+j}, \Delta^{1}) \arrow[d, "\text{pre } \rho_{i+1}"]\\
    1 \arrow[r, "\name{\id}"'] & \Map(\Delta^1, \Delta^1),
\end{tikzcd}
\end{center}
the bottom square is a pullback, because the fibre of the right bottom map is contractible, thus the whole rectangle is a pullback by Lemma~\ref{prismlemma}.

Using the Lemma~\ref{pbkfibresq}, we only have to check that the front square in the cube 
\begin{center}
\begin{tikzcd}[column sep=tiny]
    B_{n+1,i} \arrow[rr, ""] \arrow[dr, ""'] 
       \arrow[dd, ""'] & & B_{n+1,i} \arrow[dd, ""] \arrow[dr, ""'] &\\
       & \Map(\Delta^{n+1+1+i}, \mathcal{M}) \arrow[rr, crossing over, ""] && \Map(\Delta^{n+1+i}, \mathcal{M}) \arrow[dd, ""'] \\
       B_{n,i} \arrow[rr, ""] \arrow[dr, ""'] && B_{n-1,i} \arrow[dr, ""'] & \\
       & \Map(\Delta^{n+1+i}, \mathcal{M}) \arrow[from=uu, crossing over, ""'] \arrow[rr, ""] && \Map(\Delta^{n+i}, \mathcal{M})
\end{tikzcd}
\end{center}
is a pullback, and apply Lemma~\ref{prismlemma}.
But the squares
\begin{center}
    \begin{tikzcd}
        \mathcal{M}_{n+1+1+i} \arrow[r, "d_{\bot}"] \arrow[d, "d_{n+1}"'] 
            & \mathcal{M}_{n+1+i} \arrow[d, "d_n"]\\
         \mathcal{M}_{n+1+i} \arrow[r, "d_{\bot}"'] &  \mathcal{M}_{n+i}
    \end{tikzcd}
\end{center}
are pullbacks because $\mathcal{M}$ is a decomposition space, $d_{\bot}$ is an inert map and $d_{n+1}$ and $d_{n}$ are always inner coface maps thus active maps. For the remaining squares, we use that the squares
\begin{center}
    \begin{tikzcd}
        \mathcal{M}_{i+1+n+1} \arrow[r, "d_{i+1}"] \arrow[d, "d_{\top}"'] 
            & \mathcal{M}_{i+1+n} \arrow[d, "d_{\top}"]\\
         \mathcal{M}_{i+1+n} \arrow[r, "d_{i+1}"'] &  \mathcal{M}_{i+1+n-1}
    \end{tikzcd}
\end{center}
are also pullbacks because $\mathcal{M}$ is a decomposition space.

(2) 
To establish the stability condition, by Lemma~\ref{stablelemma} it is enough to prove that the two following squares are pullbacks:
\begin{center}
    \begin{tikzcd}
        B_{0,0} & B_{0,1} \arrow[l, "d_0"'] \\
         B_{1,0} \arrow[u, "e_0"]  & B_{1,1} \arrow[l, "d_0"] \arrow[u, "e_0"'] \arrow[ul, phantom, "\ulcorner", very near start],
    \end{tikzcd}
 \qquad
    \begin{tikzcd}
        B_{0,0} & B_{0,1} \arrow[l, "d_1"'] \\
         B_{1,0} \arrow[u, "e_1"]  & B_{1,1} \arrow[l, "d_1"] \arrow[u, "e_1"'] \arrow[ul, phantom, "\ulcorner", very near start].
    \end{tikzcd}
\end{center}
We can prove this with the same strategy used above. The decomposition space axioms used here are that the following squares are pullbacks
\begin{center}
    \begin{tikzcd}
        \mathcal{M}_{0} \ar[r, "s_0"] 
            & \mathcal{M}_{1} \\
        \mathcal{M}_{1} \ar[r, "s_1"'] \ar[u, "d_\bot"] \ar[ur, phantom, "\urcorner", very near start] & \mathcal{M}_{2} \ar[u, "d_\bot"'],
    \end{tikzcd} \qquad
    \begin{tikzcd}
    \mathcal{M}_{0} \ar[r, "s_0"] 
        & \mathcal{M}_{1} \\
    \mathcal{M}_{1} \ar[r, "s_0"'] \ar[u, "d_\top"] \ar[ur, phantom, "\urcorner", very near start] & \mathcal{M}_{2} \ar[u, "d_\top"'].
\end{tikzcd}
\end{center}

(3) and (4)
The augmentations $Y_j := B_{-1,j}$ and $X_i := B_{i,-1}$ are also given by the following fibre sequences 
\begin{center}
\begin{tikzcd}
     Y_j \arrow[r, ""] 
       \arrow[d, ""'] \arrow[dr, phantom, "\lrcorner", very near start] & \Map(\Delta^{j}, \mathcal{M}) \arrow[d, "S"]\\
    1 \arrow[r, "\name{d_0}"'] & \Map(\Delta^0, \Delta^1),
\end{tikzcd} \qquad
\begin{tikzcd}
     X_i \arrow[r, ""] 
       \arrow[d, ""'] \arrow[dr, phantom, "\lrcorner", very near start] & \Map(\Delta^{i}, \mathcal{M}) \arrow[d, "T"]\\
    1 \arrow[r, "\name{d_1}"'] & \Map(\Delta^0, \Delta^1),
\end{tikzcd}
\end{center}
where the map $S$ sends $\sigma$ to $p \circ \sigma \circ (d^\top)^{j}$, 
and the map $T$ sends $\sigma$ to $p \circ \sigma \circ (d^\bot)^{i}$.
Since the following squares commute
\begin{center}
\begin{tikzcd}
        \Map(\Delta^{i+1+j}, \mathcal{M}) \arrow[r, "(d_{\bot})^{i+1}"] \arrow[d, "R_{i+1+j}"'] & \Map(\Delta^{j}, \mathcal{M}) \arrow[d, "S"] \\
        \Map(\Delta^{1}, \Delta^{1})  \arrow[r, "d_\bot"'] & \Map(\Delta^{0}, \Delta^{1}),
\end{tikzcd} \quad
\begin{tikzcd}
        \Map(\Delta^{i+1+j}, \mathcal{M}) \arrow[r, "(d_\top)^{j+1}"] \arrow[d, "R_{i+1+j}"'] & \Map(\Delta^{i}, \mathcal{M}) \arrow[d, "T"] \\
        \Map(\Delta^{1}, \Delta^{1})  \arrow[r, "d_\top"] & \Map(\Delta^{0}, \Delta^{1}),
\end{tikzcd}
\end{center}
it is enough to define maps $B_{i,j} \to Y_j$ and $B_{i,j} \to X_i$.

The augmentation maps are culf: we need to prove that the back squares of the following cubes are pullbacks:

\begin{center}
\begin{tikzcd}[column sep=tiny]
     Y_0 \arrow[rr, "s_0"] \arrow[dr, ""'] 
        & & Y_{1}  \arrow[dr, ""'] &\\
       & \Map(\Delta^{0}, \mathcal{M})  && \Map(\Delta^{1}, \mathcal{M})  \\
       B_{0,0} \arrow[rr, "s_0" near start] \arrow[dr, ""'] \ar[uu, "d_0" near start] && B_{0,1} \arrow[dr, ""'] \ar[uu, "d_0"' near start] \arrow[from=ul, to=ur, crossing over, "s_0" near start]& \\
       & \Map(\Delta^{1}, \mathcal{M}) \ar[uu, crossing over, "d_0" near start] \arrow[rr, "s_1"] && \Map(\Delta^{2}, \mathcal{M}), \ar[uu, "d_0"' near start]
\end{tikzcd}
\end{center}

\begin{center}
\begin{tikzcd}[column sep=tiny]
     Y_1  \arrow[dr, ""'] 
        & & Y_2  \arrow[dr, ""'] \ar[ll, "d_1"'] &\\
       & \Map(\Delta^{1}, \mathcal{M})  && \Map(\Delta^{2}, \mathcal{M}) \\
       B_{0,1}  \arrow[dr, ""'] \ar[uu, "d_0" near start] && B_{0,2} \ar[ll, "d_1"' near start] \arrow[dr, ""'] \ar[uu, "d_0"' near start] \ar[from=ur, to=ul, crossing over, "d_1"' near start] & \\
       & \Map(\Delta^{2}, \mathcal{M}) \ar[uu, crossing over, "d_0" near start]  && \Map(\Delta^{3}, \mathcal{M}) \ar[ll, "d_2"] \ar[uu, "d_0"' near start].
\end{tikzcd}
\end{center}
We can apply the Lemma~\ref{pbksqfibresq} since the front square is a pullback because $\mathcal{M}$ is a decomposition space.
\end{proof}

To summarise, given a decomposition space correspondence $p: \mathcal{M} \to \Delta^1$, we get a bicomodule configuration and then $\infGrpd_{/B_{0,0}}$ is a bicomodule by Theorem~\ref{thm:bicomodule}.

\subsection{Cocartesian and cartesian fibrations of decomposition spaces}
\label{sub:cartsset}

Ayala and Francis \cite{AF} formulate a homotopy-invariant definition of cartesian and cocartesian fibrations so it can be equally
well formulated in any model for $\infty$-categories. We adapt here those definitions to decomposition spaces.


Let $p: X \to Y$ be a simplicial map between decomposition spaces.
A morphism $\Delta^1 \xra{<s \xra{a} t>} X$ is \emph{$p$-cocartesian} if the diagram of coslices of decomposition spaces
\begin{center}
    \begin{tikzcd}
        ^{a/}X \arrow[r, ""] 
           \arrow[d, ""'] & ^{s/}X \arrow[d, ""]\\
        ^{pa/}Y \arrow[r, ""'] & ^{ps/}Y
    \end{tikzcd}
\end{center}
is a pullback, where the coslice $^{s/}X$ is given by pullback of lower décalage $\Dec_{\bot}(X)$:
\begin{center}
    \begin{tikzcd}
        (^{s/}X)_n \arrow[r, ""] 
           \arrow[d, ""'] \ar[dr, phantom, "\lrcorner", very near start] & \Dec_{\bot}(X)_{n} \arrow[d, "(d_\top)^{n+1}"]\\
        1 \arrow[r, "\name{s}"'] & X_0,
    \end{tikzcd}
\end{center}
similarly the coslice $^{a/}X$ is given by pullback of $\Dec_{\bot}(\Dec_{\bot}(X))$
\begin{center}
    \begin{tikzcd}
        (^{a/}X)_n \arrow[r, ""] 
           \arrow[d, ""'] \ar[dr, phantom, "\lrcorner", very near start] & \Dec_{\bot}(\Dec_{\bot}(X))_{n} \arrow[d, "(d_\top)^{n+1}"]\\
        1 \arrow[r, "\name{a}"'] & X_1,
    \end{tikzcd}
\end{center}
and the functor ${}^{a/}X \to {}^{s/}X$ is given by $\Dec_\bot(d_\bot)$, where the simplicial map $d_\bot: \Dec_\bot(X) \to X$ is given by the original $d_0$.

The functor $p: X \to Y$ is a \emph{cocartesian fibration} if any diagram of solid arrows
\begin{center}
    \begin{tikzcd}
        \Delta^0 \arrow[r, ""] 
           \arrow[d, "d_1"'] & X \arrow[d, "p"]\\
        \Delta^1 \arrow[r, ""'] \ar[ur, dotted] & Y
    \end{tikzcd}
\end{center}
admits a $p$-cocartesian diagonal filler.

Similarly, a morphism $\Delta^1 \xra{<s \xra{a} t>} X$ is \emph{$p$-cartesian} if the diagram of slice decomposition spaces
\begin{center}
    \begin{tikzcd}
        X_{/a} \arrow[r, ""] 
           \arrow[d, ""'] & X_{/t} \arrow[d, ""]\\
        Y_{/pa} \arrow[r, ""'] & Y_{/pt}
    \end{tikzcd}
\end{center}
is a pullback, where the slice $X_{/t}$ is given by pullback of the upper décalage $\Dec_{\top}(X)$, the slice $X_{/a}$ is given by pullback of $\Dec_{\top}(\Dec_{\top}(X))$, and the functor $X_{/a} \to X_{/t}$ is given by $\Dec_\top(d_\top)= d_{\top-1}$, where $d_\top: \Dec_\top(X) \to X$ is given by the original $d_\top$.

The functor $p: X \to Y$ is a \emph{cartesian fibration} if any diagram of solid arrows
\begin{center}
    \begin{tikzcd}
        \Delta^0 \arrow[r, ""] 
           \arrow[d, "d_0"'] & X \arrow[d, "p"]\\
        \Delta^1 \arrow[r, ""'] \ar[ur, dotted] & Y
    \end{tikzcd}
\end{center}
admits a $p$-cartesian diagonal filler.

\subsubsection*{Bisimplex category with diagonal maps}

We define $\overline{[i,j]} : {M}_{\phi_{i,j}} \to \Delta^{1}$ to be the canonical projection from the mapping cylinder of $\phi_{i,j}:= (d^\top)^j : \Delta^{i} \to \Delta^{i+j}$; it is a
cocartesian fibration. They assemble into a category, denoted $\overline{\simplexcat_{/\Delta^1}}$, of shape like $\simplexcat_{/ \Delta^1}$, but with extra diagonal maps $d: \overline{[i{-}1,j]} \to \overline{[i,j{-}1]}$ given by inclusion. These satisfy new simplicial identities:
\revised{
$\sigma_k d = d \sigma_{k+1}$, $0\le k \le j$, where $\sigma_k$ are degeneracy maps “on $j$” (horizontal) and $d$ are diagonal maps, and with face maps:  $d \delta_{k+1} = \delta_k d$, $0\le k\le j$, where $\delta_k$ are horizontal face maps.
Similarly for degeneracy maps $\tau_k$ “on $i$” (vertical), $\tau_k d = d \tau_{k}$, $0\le k < i$ and 
$d \epsilon_{k} = \epsilon_k d$, $0\le k < i$, where $\epsilon_k$ are vertical face maps.
} 
For example, we can draw $\overline{[2,1]}$ as follows
\begin{center}
  \begin{tikzcd}\label{picturefilled}
                    & .        \\
    . \ar[r]        & . \ar{u} \\
    . \ar[r] \ar[u] & . \ar[u] \\
    . \ar[r] \ar[u] & . \ar[u] 
  \end{tikzcd}
\end{center}
where the horizontal maps lie over the map in $\Delta^1$.
It is like a cocartesian version of the earlier drawing of Remark~\ref{shape1}.

\begin{rmk}
    We can draw $\overline{[i,j]}$ as a column of $i{+}1$ black dots followed by $j{+}1$ white dots. Where arrows in $\simplexcat_{/\Delta^1}$ send black dots to black dots and white dots to white dots (without crossing), in $\overline{\simplexcat_{/\Delta^1}}$ we allow moreover to map white dots to black dots.
\end{rmk}

There is a natural notion of nerve in the context of cocartesian fibrations over $\Delta^1$: given a cocartesian fibration $p : \mathcal{M} \to \Delta^1$ between decomposition spaces, define the cocartesian nerve $N_{\text{cocart}}: \Cat_{\infty/\Delta^1}^{\text{cocart}} \to \Fun((\overline{\simplexcat_{/\Delta^1}})\op, \infGrpd)$ by
$N_{\text{cocart}}(p)_{\overline{i,j}} := \Map_{/\Delta^1}^{\text{cocart}}(\overline{[i,j]}, \mathcal M)$, the mapping space preserving cocartesian arrows.

Similarly to the previous Section~\ref{sub:corresp}, we get a bicomodule configuration and $\infGrpd_{/B_{0,0}}$ is a bicomodule over $\infGrpd_{/X_{1}}$ and $\infGrpd_{/Y_{1}}$. We have here moreover diagonal maps $B_{i,j-1} \to B_{i-1,j}$ and new sections $s_{-1}: B_{i,j-1} \to B_{i,j}$, for $i \ge 0 $ given by the composition with a diagonal map. That is $\infGrpd_{/B_{0,0}}$ is pointed as a right comodule over $\infGrpd_{/Y_{1}}$.

\begin{center}
    \begin{tikzcd} 
         & B_{-1,0} \ar[r] 
         & B_{-1,1} \ar[l, shift left=1.5]
                    \ar[l, shift right=1.5]
                    \ar[r, shift left=1.5] 
                    \ar[r, shift right=1.5]
         & B_{-1,2} \ar[l]
                    \ar[l, shift left=3] 
                    \ar[l, shift right=3] 
                    \ar[r, phantom, "\dots"]
                    & \phantom{} \\
       B_{0,-1} \ar[d]
                \ar[ur]
         & B_{0,0} \ar[d]
                   \ar[u]
                   \ar[l] 
                   \ar[r] 
                   \ar[ur]
         & B_{0,1} \ar[d]
                   \ar[u]
                   \ar[l, shift left=1.5]
                   \ar[l, shift right=1.5]
                   \ar[r, shift left=1.5] 
                   \ar[r, shift right=1.5]
                   \ar[ur] 
         & B_{0,2} \ar[u]
                   \ar[d]
                   \ar[l]
                   \ar[l, shift left=3] 
                   \ar[l, shift right=3]
                   \ar[r, phantom, "\dots"]
                    & \phantom{}\\
       B_{1,-1} \ar[u, shift left=1.5]
                \ar[u, shift right=1.5]
                   \ar[ur]
                   \ar[d, phantom, "\vdots"]
         & B_{1,0} \ar[u, shift left=1.5]
                   \ar[u, shift right=1.5]
                   \ar[l] 
                   \ar[r] 
                   \ar[ur]
                   \ar[d, phantom, "\vdots"]
         & B_{1,1} \ar[u, shift left=1.5]
                   \ar[u, shift right=1.5]
                   \ar[l, shift left=1.5]
                   \ar[l, shift right=1.5]
                   \ar[r, shift left=1.5] 
                   \ar[r, shift right=1.5] 
                   \ar[ur]
                   \ar[d, phantom, "\vdots"]
         & B_{1,2}   
                   \ar[u, shift left=1.5]
                   \ar[u, shift right=1.5]
                   \ar[l]
                   \ar[l, shift left=3] 
                   \ar[l, shift right=3]
                   \ar[r, phantom, "\dots"]
                   \ar[d, phantom, "\vdots"]
                    & \phantom{}\\
        \phantom{}
          & \phantom{}\
          &\phantom{}
          & \phantom{}
    \end{tikzcd}
\end{center}






\revised{
We now adapt Lurie's definition of adjunction of $\infty$-categories \cite{Lurie} to decomposition spaces.
}

An \emph{adjunction} between decomposition spaces $X$ and $Y$ is a simplicial map between decomposition spaces $p: \mathcal{M} \to \Delta^1$ which is both a cartesian and a cocartesian fibration together with equivalences $X \eq \mathcal{M}_{\{0\}}$ and $Y \eq \mathcal{M}_{\{1\}}$.

\revised{
\begin{prop}
    Given an adjunction $p: \mathcal{M} \to \Delta^1$, the bisimplicial $\infty$-groupoid $B$ described above is a bicomodule configuration. Moreover $\infGrpd_{/B_{0,0}}$ is pointed as a right comodule over $\infGrpd_{/Y_{1}}$, and as a left comodule over $\infGrpd_{/X_{1}}$.
\end{prop}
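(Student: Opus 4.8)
The plan is to separate the two assertions and treat the bicomodule-configuration part exactly as for a bare correspondence, letting the two pointings come from the cartesian, resp.\ cocartesian, half of the adjunction.

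First I would note that an adjunction $p\colon\mathcal M\to\Delta^1$ is in particular a decomposition space correspondence, so Proposition~\ref{runex} applies to the associated augmented bisimplicial nerve $B$: it is Segal in both directions, stable, augmented, and its augmentations are culf. The two augmentations are $X=B_{\bullet,-1}\eq\mathcal M_{\{0\}}$ and $Y=B_{-1,\bullet}\eq\mathcal M_{\{1\}}$; these are decomposition spaces — by the very definition of adjunction, and in any case because $\mathcal M_{\{i\}}\to\mathcal M$ is a base change of the culf monomorphism $\{i\}\hookrightarrow\Delta^1$, hence a culf map into a decomposition space. Theorem~\ref{thm:bicomodule} then yields the bicomodule structure, so $\infGrpd_{/B_{0,0}}$ is a bicomodule over $\infGrpd_{/X_1}$ and $\infGrpd_{/Y_1}$. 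If one prefers to take $B$ to be the cocartesian nerve of Section~\ref{sub:cartsset} rather than the relative nerve, one simply reruns the proof of Proposition~\ref{runex}, using the mapping-space fibre sequences for $\Map^{\mathrm{cocart}}_{/\Delta^1}(\overline{[i,j]},\mathcal M)$ in place of those for $\Map_{/\Delta^1}([i,j],\mathcal M)$: every square required for Segality, stability and culfness of the augmentations is still a pullback by the decomposition-space axioms for $\mathcal M$, via Lemmas~\ref{prismlemma}, \ref{pbkfibresq} and \ref{pbksqfibresq}.

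Next, for the right-pointing I would invoke precisely the structure already exhibited in the discussion preceding the statement: since $p$ is a cocartesian fibration, insertion of cocartesian lifts supplies the diagonal maps $d\colon B_{i,j-1}\to B_{i-1,j}$, and composing with these yields the new bottom degeneracies $s_{-1}\colon B_{i,j-1}\to B_{i,j}$. The new lower simplicial identities of Section~\ref{sub:cartsset}, namely $d\,\delta_{k+1}=\delta_k\,d$ and $\sigma_k\,d=d\,\sigma_{k+1}$, then show that $s_{-1}$ is a section of the horizontal $d_\bot$, so $\infGrpd_{/B_{0,0}}$ is pointed as a right $\infGrpd_{/Y_1}$-comodule.

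Finally, the left-pointing follows by the mirror-image argument using that $p$ is \emph{also} a cartesian fibration: cartesian lifts give reverse diagonals $B_{i-1,j}\to B_{i,j-1}$, composition with which produces new bottom degeneracies $t_{-1}\colon B_{i-1,j}\to B_{i,j}$ in the vertical direction, and the dual identities $d\,\epsilon_k=\epsilon_k\,d$, $\tau_k\,d=d\,\tau_k$ make $t_{-1}$ a section of the vertical $e_\bot$, so $\infGrpd_{/B_{0,0}}$ is pointed as a left $\infGrpd_{/X_1}$-comodule as well. The step I expect to require the most care — and the only place where being a genuine adjunction rather than just one of the two fibrations enters essentially — is organizing \emph{both} families of diagonals on the \emph{same} bisimplicial $\infty$-groupoid: one works over the indexing category $\overline{\simplexcat_{/\Delta^1}}$ enlarged to carry diagonals of both kinds and checks, by the same fibre-sequence bookkeeping as in Proposition~\ref{runex} together with the decomposition-space axioms for $\mathcal M$, that $p$ being bicartesian makes all the relevant naturality squares pullbacks simultaneously. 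Once this compatibility is in place, the two section identities are verified exactly as in the one-sided cases, completing the proof.
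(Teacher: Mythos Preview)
Your approach is essentially the paper's own: invoke Proposition~\ref{runex} (since an adjunction is in particular a correspondence) to get the bicomodule configuration, then read off the two pointings from the cocartesian and cartesian structures respectively. The paper's proof is two sentences; you have simply unpacked them.

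One small slip worth correcting: for the left pointing you write that the new degeneracies are \emph{bottom} sections $t_{-1}$ of $e_\bot$. In the paper's conventions (see the discussion after Theorem~\ref{thm:Mobinversion} and the definition of Möbius bicomodule configuration), a left pointed comodule configuration carries new \emph{top} degeneracies $t_{\top+1}$, and the Möbius bicomodule configuration has ``extra bottom degeneracy maps in horizontal direction and extra top degeneracy maps in vertical direction.'' The cartesian lifts do produce the correct thing, but the section they yield in the vertical direction is $t_{\top+1}$ (a section of $e_\top$), not $t_{-1}$. This is only a labelling issue and does not affect the argument.
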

}
\revised{
\begin{proof}
    By Proposition~\ref{runex}, the bisimplicial $\infty$-groupoid $B$ is a bicomodule configuration. The pointings are given by the cartesian and cocartesian conditions.
\end{proof}

    Adjunctions between $\infty$-categories are examples of adjunctions between decomposition spaces.
}

\begin{ex}\label{exFG}
\revised{
  To illustrate these abstract concepts, let us spell out the bicomodule configuration associated to an ordinary adjunction of $1$-categories $F\colon X \rightleftarrows Y\colon G$.
The $\infty$-groupoid $B_{0,0}$ is now just the set of arrows $Fx \to y$, which by adjunction correspond to arrows $x \to Gy$, and $B_{1,0}$ is the set of composable pair $Fx \to Fx' \to y$ (which is the same as $x \to x' \to Gy$). In general $B_{i,j}$ is the set of chains of composable arrows $Fx_0 \to \dots \to Fx_i \to y_j \to \dots \to y_0$.
These chains can be drawn as in the picture page \pageref{picturejump}, where the horizontal arrow is a `mixed' arrow $Fx \to y$.
This drawing can be filled as in the picture page~\pageref{picturefilled}, and thus giving a right pointing by the following rearrangement:
}
\begin{center}
  \begin{tikzcd}
    Fx' \ar[r] & y \\
    Fx  \ar[u] &  
  \end{tikzcd}
$\quad \mapsto \quad$
  \begin{tikzcd}
              & y         \\
    Fx \ar[r] & Fx' \ar{u}
  \end{tikzcd}
\end{center}
\revised{
Similarly the functor $G: Y \to X$ induces a left pointing on the equivalent (by adjunction) augmented simplicial set of chains $x_0 \to \dots \to x_i \to Gy_j \to \dots \to Gy_0$.
}
\end{ex}



\section{Möbius inversion for comodules and a Rota formula}
\label{sec:mobcomod}

\subsection{Finiteness and cardinality}
\label{sub:moreprelim}


An $\infty$-groupoid $X$ is \emph{locally finite} if at each base point $x$ the homotopy groups $\pi_i(X, x)$ are finite for $i \ge 1$ and are trivial for $i$ sufficiently large. It is called \emph{finite} if furthermore it has only finitely many components. We denote $\infgrpd$ (following the notation of \cite{GKT2}) the $\infty$-category of finite $\infty$-groupoids. 
A map is \emph{finite} if each fibre is finite. 
A pullback of any homotopy finite map is again finite.
A span $I \xleftarrow{p} M \xrightarrow{q} J$ and the corresponding linear functor $\infGrpd_{/I} \xra{} \infGrpd_{/J}$
are \emph{finite} if the map $p$ is finite.

A decomposition space $X$ is \emph{locally finite} if $X_1$ is locally finite and both $s_0$ and $d_1$ are finite maps \cite[\S 7.4]{GKT2}.

\begin{prop}[{\cite[Proposition 4.3]{GKT:HLA}}]\label{prop:finitelinfunc}
    Let $I$, $J$, $M$ be locally finite $\infty$-groupoids and
    $I \xleftarrow{p} M \xrightarrow{q} J$
    a finite span. Then the induced finite linear functor
    $\infGrpd_{/I} \xra{} \infGrpd_{/J}$
    restricts to
    $\infgrpd_{/I} \xra{} \infgrpd_{/J}$.
\end{prop}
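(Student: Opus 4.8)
The plan is to make the induced functor completely explicit and then reduce the statement to a single fact about finite $\infty$-groupoids. Recall that the linear functor attached to the span $I \xla{p} M \xra{q} J$ is $q_! \circ p^* \colon \infGrpd_{/I} \to \infGrpd_{/J}$, and that on an object $(T \xra{t} I)$ it returns the composite $M \times_I T \to M \xra{q} J$, where $M \times_I T \to T$ is the pullback of $p$ along $t$. Since $q_!$ only post-composes with $q$ and does not alter the total space, the claim reduces to the following: whenever $T$ is a finite $\infty$-groupoid and $t \colon T \to I$ is any map, the $\infty$-groupoid $M \times_I T$ is finite.

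First I would observe that the projection $M \times_I T \to T$ is a finite map: its fibre over a point $b \in T$ is precisely the fibre of $p$ over $t(b)$, which is finite because the span is assumed finite (this is also an instance of the general fact, recalled in Section~\ref{sub:moreprelim}, that a pullback of a homotopy finite map is homotopy finite). So we are reduced to the general statement that a finite map whose base is a finite $\infty$-groupoid has finite total space.

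That statement I would prove by a routine homotopy-group argument. Write $E := M \times_I T$, pick a base point $e \in E$ over $b \in T$, and use the fibre sequence $F \to E \to T$ with $F$ the (finite) fibre over $b$. For each $n \ge 1$ the exact sequence $\pi_n(F,e) \to \pi_n(E,e) \to \pi_n(T,b)$ has both outer terms finite, and trivial for $n$ sufficiently large, which forces $\pi_n(E,e)$ finite and trivial for $n$ large; and the preimage of any class $[b] \in \pi_0(T)$ under $\pi_0(E) \to \pi_0(T)$ is the image of the finite set $\pi_0(F)$, so $\pi_0(E)$ is finite since $\pi_0(T)$ is. Hence $E$ is a finite $\infty$-groupoid.

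Putting the pieces together, $q_! p^*(t) = (M \times_I T \to J)$ lies in $\infgrpd_{/J}$, which is exactly the assertion that the finite linear functor restricts to $\infgrpd_{/I} \to \infgrpd_{/J}$. The only genuine content is the fibre-sequence computation of the third paragraph, and even that is standard; I would expect the local finiteness hypotheses on $I$, $J$, $M$ to play no role in this particular restriction property, but rather to be in force so that the slices $\infgrpd_{/I}$ and $\infgrpd_{/J}$ themselves behave well in the surrounding homotopy-linear-algebra formalism.
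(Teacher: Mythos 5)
Your proposal is correct and follows essentially the same route as the source: the paper itself gives no proof of this proposition, importing it from \cite[Proposition~4.3]{GKT:HLA}, and the argument there is exactly your reduction to the claim that a finite map over a finite base has finite total space, proved via the long exact sequence of the fibre sequence $F \to M\times_I T \to T$. Your closing remark is also accurate — the local finiteness of $I$, $J$, $M$ is not needed for the restriction itself, only so that $\infgrpd_{/I}$ and $\infgrpd_{/J}$ fit into the cardinality formalism.
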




The cardinality \cite{BD} of a finite $\infty$-groupoid $X$ is the alternating product of cardinalities of the homotopy groups
\[
    |X| = \sum_{x \in \pi_0 (X)} \prod_{k=1}^{\infty} |\pi_k(X,x)|^{(-1)^{k}}.
\]
For a locally finite $\infty$-groupoid $S$, there is a notion of cardinality $|-|: \infgrpd_{/S} \to \Q_{\pi_0 S}$ sending a basis element $\name{s}$ to the basis element $\delta_s = |\name{s}|$.




For any locally finite decomposition space $X$, we can take the cardinality of the linear functors
$
   \delta: \infgrpd_{/X_1} \xra{} \infgrpd
$
and
$
    \Delta: \infgrpd_{/X_1} \xra{} \infgrpd_{/X_1 \times X_1}
$
to obtain a coalgebra structure
\begin{align*}
    \Q_{\pi_0 X_1} & \xra{|\delta|} \Q \\
    \Q_{\pi_0 X_1} & \xra{|\Delta|} \Q_{\pi_0 X_1} \otimes \Q_{\pi_0 X_1}
\end{align*}
called the \emph{numerical incidence coalgebra} of $X$, see
\cite[\S 7.7]{GKT2}.


\subsection{Completeness and Möbius condition}
\label{sub:complete}

A decomposition space is called \emph{complete} if $s_0: X_0 \to X_1$ is a monomorphism \cite[\S 2]{GKT2}.
Since $s_0$ is a monomorphism, we can identify $X_0$ with a $\infty$-subgroupoid of $X_1$. We denote $X_a$ its complement:
$X_1 \eq X_0 + X_a$.
More generally, recall the word notation from \cite{GKT2}: consider the alphabet with three letters $\{ 0,1,a\}$; $0$ indicates degenerate edges $s_0(x) \in X_1$, $a$ denotes edges specified to be nondegenerate, and $1$ denotes unspecified edges.
For $w$ a word of length $n$ in this alphabet, define
\[
  X^{w} = \prod_{i \in w} X_i \subset (X_1)^n.
\]
This inclusion is full since $X_a \subset X_1$ is full by completeness.

Denote by $X_w$ the $\infty$-groupoid of $n$-simplices whose principal edges have the types indicated in the word $w$, that is the full subgroupoid of $X_n$ given by the following pullback

\begin{center}
    \begin{tikzcd}
        X_w \arrow[r, ""] 
            \arrow[d, ""'] 
            \ar[dr, phantom, "\lrcorner", very near start]
             & X_n \arrow[d, ""]\\
        X^{w} \arrow[r, ""'] & (X_1)^{n}.
    \end{tikzcd}
\end{center}

We define $\ora{X}_{n} = X_{a\dots a} \subset X_{n}$ to be the full subgroupoid of simplices with all principal edges nondegenerate.
\revised{It is the complement of the union of the essential images of the degeneracy maps 
$s_i\colon X_{n-1} \to X_n$, that is
\[
\ora X_n = X_n \setminus \bigcup_{i=0}^{n-1} \Im(s_i) .
\]
By definition $\ora X_0 = X_0$.
}


For a complete decomposition space, the spans
$
    X_1 \xla{d_1^{n-1}} \ora{X}_n \rightarrow 1
$
define linear functors, the \emph{Phi functors}
\[
    \Phi_n: \infGrpd_{/X_1} \to \infGrpd.
\]
We also put  $\displaystyle \Phi_{\text{even}} := \sum_{n \text{ even}} \Phi_{n}$, and $ \displaystyle \Phi_{\text{odd}} := \sum_{n \text{ odd}} \Phi_{n}$.

The incidence algebra of a decomposition space contains the \emph{zeta functor} 
\[
    \zeta: \infGrpd_{/X_1} \to \infGrpd
\]
given by the span
$X_1 \xleftarrow{=} X_1 \xrightarrow{} 1$.



\begin{thm}[{\cite[Theorem 3.8]{GKT2}}]\label{mobdecomp}
    For a complete decomposition space, the following Möbius inversion holds:
\begin{align*}
        \zeta \ast \Phi_{\text{even}} & \eq \delta + \zeta \ast \Phi_{\text{odd}} \\
      \eq \Phi_{\text{even}} \ast \zeta & \eq \delta + \Phi_{\text{odd}} \ast \zeta.
\end{align*}
\end{thm}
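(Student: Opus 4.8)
The plan is to compute the convolution $\zeta \ast \Phi_n$ once and for all, establish the identity $\zeta \ast \Phi_n \eq \Phi_n + \Phi_{n+1}$ for every $n \ge 0$ (with the convention $\Phi_0 = \delta$, i.e.\ the span $X_1 \xleftarrow{s_0} X_0 \to 1$, consistent with $\ora X_0 = X_0$), and then reorganise the even and odd series. All the sums in sight are coproducts in $\infGrpd$, which exist unconditionally, and convolution with a fixed linear functor preserves them (it is assembled from the colimit-preserving functors $p^*$ and $q_!$ between slices), so there is no issue of convergence at the objective level.

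Two facts from completeness are needed. Since $s_0\colon X_0 \to X_1$ is a monomorphism we have $X_1 \eq X_0 + X_a$, and comparing defining spans this is precisely the statement $\zeta \eq \delta + \Phi_1$. More generally, for a word $w$ in $\{0,1,a\}$, the subgroupoid of $(|w|{+}1)$-simplices whose first principal edge is degenerate and whose remaining principal edges have types $w$ is the essential image of $s_0\colon X_w \to X_{|w|+1}$, and $s_0$ restricts to an equivalence onto it (and dually for the last principal edge and the top degeneracy). This is the standard splitting lemma for complete decomposition spaces.

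The heart of the argument is to identify $\zeta \ast \Phi_n$ with the span $X_1 \xleftarrow{d_1^{\,n}} X_{1a\cdots a} \to 1$, where $X_{1a\cdots a} \subseteq X_{n+1}$ is the full subgroupoid of $(n{+}1)$-simplices whose last $n$ principal edges are nondegenerate. Unfolding the convolution, $\zeta \ast \Phi_n$ is the span obtained by pulling $X_1 \times \ora X_n \xrightarrow{\id \times d_1^{n-1}} X_1 \times X_1$ back along $(d_2,d_0)\colon X_2 \to X_1 \times X_1$; since the first factor is pulled back along the identity, this pullback is the fibre product $X_2 \times_{X_1} \ora X_n$ (over the $12$-edge of the triangle, resp.\ the long edge of the $n$-simplex), with left leg $(\sigma,\tau) \mapsto d_1\sigma$. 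The decomposition-space (equivalently, $2$-Segal) axioms supply the ``ear-cutting'' equivalence $X_{n+1} \eq X_2 \times_{X_1} X_n$, obtained by cutting the triangle on $\{0,1,n{+}1\}$ off the $(n{+}2)$-gon; restricting the $X_n$-factor to $\ora X_n$ identifies $X_2 \times_{X_1} \ora X_n$ with $X_{1a\cdots a}$, and under this identification the left leg becomes the long-edge map $d_1^{\,n}\colon X_{n+1} \to X_1$. Now split $X_{1a\cdots a} = X_{0a\cdots a} + X_{aa\cdots a}$ according to whether the first principal edge is degenerate: the splitting lemma gives $X_{0a\cdots a} \eq s_0(\ora X_n)$ and $X_{aa\cdots a} = \ora X_{n+1}$, and the simplicial identity $d_1 s_0 = \id$ yields $d_1^{\,n} \circ s_0 = d_1^{\,n-1}$. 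Hence $\zeta \ast \Phi_n \eq \Phi_n + \Phi_{n+1}$ for $n \ge 1$; for $n = 0$ the asserted identity reads $\zeta \ast \delta \eq \delta + \Phi_1$, which holds since $\zeta \ast \delta \eq \zeta \eq \delta + \Phi_1$.

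The conclusion is then bookkeeping. Because convolution preserves coproducts,
\[
\zeta \ast \Phi_{\text{even}} \;\eq\; \sum_{n \text{ even}} \bigl(\Phi_n + \Phi_{n+1}\bigr) \;\eq\; \delta + \sum_{m \ge 1} \Phi_m ,
\]
using $\Phi_0 = \delta$; the same reindexing gives $\delta + \zeta \ast \Phi_{\text{odd}} \eq \delta + \sum_{m \ge 1} \Phi_m$, whence $\zeta \ast \Phi_{\text{even}} \eq \delta + \zeta \ast \Phi_{\text{odd}}$. The second line, $\Phi_{\text{even}} \ast \zeta \eq \delta + \Phi_{\text{odd}} \ast \zeta$, follows by the mirror-image argument: $\Phi_n \ast \zeta$ is the span $X_1 \xleftarrow{d_1^{\,n}} X_{a\cdots a1} \to 1$, which splits via the top degeneracy as $\Phi_n + \Phi_{n+1}$ in exactly the same way. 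I expect the one genuinely substantial step to be the third paragraph: matching the pullback computing $\zeta \ast \Phi_n$ with $X_{1a\cdots a} \subseteq X_{n+1}$ and recognising the induced map as $d_1^{\,n}$ — this is where the full force of the decomposition-space axioms is used (via the ear-cutting equivalence $X_{n+1} \eq X_2 \times_{X_1} X_n$), together with the completeness splitting lemma used to peel off the degenerate part cleanly; after that, only simplicial identities and coproduct manipulations remain.
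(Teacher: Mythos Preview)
Your proof is correct and follows exactly the strategy of the cited source \cite{GKT2}; note that the present paper does not reprove this theorem but merely quotes it, so there is no in-paper proof to compare against directly. That said, your argument mirrors precisely the paper's own proof of the analogous comodule statement (Proposition~\ref{prop:Mobformula} and Theorem~\ref{thm:Mobinversion}): compute $\zeta \ast \Phi_n$ as the span through $X_{1a\cdots a}$ via a pullback coming from the decomposition-space axioms, split it as $\Phi_n + \Phi_{n+1}$ using completeness, and then telescope.
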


This is however not enough to allow the Möbius inversion formula to descend to the vector space level.
A complete decomposition space $X$ is of \emph{locally finite length} \cite{GKT2} if every edge $f \in X_1$ has finite length, that is, the fibres $(\ora{X}_n)_f$ of $d_1^{(n)}: \ora{X}_n \to X_1$ over $f$ are empty for $n$ sufficiently large.


A \emph{Möbius decomposition space} \cite{GKT2} is a decomposition space which is locally finite and of locally finite length; the fibre $(\ora{X}_n)_f$ is finite (eventually empty).
It follows that the map
\[
    \sum_n d_1^{n-1}: \sum_n \ora{X}_n \to X_1
\]
is finite;
by Proposition~\ref{prop:finitelinfunc}, the Phi functors descend to 
\[\Phi_n: \infgrpd_{/X_1} \to \infgrpd\]
and we can take cardinality to obtain functions $|\Phi_n|: \Q_{\pi_0 X_1} \to \Q$.


Finally, we can take cardinality of the abstract Möbius inversion formula of \ref{mobdecomp}, see \cite{GKT2} for a complete exposition.

\begin{thm}[{\cite[Theorem 8.9]{GKT2}}]
    If $X$ is a Möbius decomposition space, then the cardinality of the zeta functor, $|\zeta|: \Q_{\pi_0 X_1} \to \Q$, is convolution invertible with inverse $|\mu| := |\Phi_{\text{even}}| - |\Phi_{\text{odd}}|$:
    \[
        |\zeta| \ast |\mu| = |\delta| = |\mu| \ast |\zeta|.
    \]
\end{thm}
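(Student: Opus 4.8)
The plan is to deduce this from the objective-level Möbius inversion of Theorem~\ref{mobdecomp} by applying the cardinality functor, the point being that the Möbius hypotheses are exactly what is needed to make this passage legitimate. First I would observe that since $X$ is a Möbius decomposition space it is of locally finite length, so for each edge $f \in X_1$ the fibres $(\ora{X}_n)_f$ of $d_1^{n-1}$ are empty for $n$ sufficiently large; hence the map $\sum_n d_1^{n-1}\colon \sum_n \ora{X}_n \to X_1$ is finite, and by Proposition~\ref{prop:finitelinfunc} the functors $\Phi_n$, and therefore $\Phi_{\text{even}}$ and $\Phi_{\text{odd}}$, restrict to $\infgrpd_{/X_1} \to \infgrpd$. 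Local finiteness of $X$ likewise makes $\zeta$ and $\delta$ finite. Thus the cardinality $|-|$ may be applied to all the functors occurring in Theorem~\ref{mobdecomp}, producing linear maps $|\Phi_n|, |\zeta|, |\delta| \colon \Q_{\pi_0 X_1} \to \Q$, and the sums $|\Phi_{\text{even}}| = \sum_{n \text{ even}} |\Phi_n|$, $|\Phi_{\text{odd}}| = \sum_{n \text{ odd}} |\Phi_n|$ are pointwise-finite, hence well defined.

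Next I would use that cardinality is monoidal and compatible with the formation of linear functors from spans (see \cite{GKT:HLA}), so it carries the convolution product $\ast$ to the numerical convolution product and the equivalences of Theorem~\ref{mobdecomp} to equalities of linear maps. Taking cardinality of
\[
\zeta \ast \Phi_{\text{even}} \eq \delta + \zeta \ast \Phi_{\text{odd}}, \qquad \Phi_{\text{even}} \ast \zeta \eq \delta + \Phi_{\text{odd}} \ast \zeta
\]
yields
\[
|\zeta| \ast |\Phi_{\text{even}}| = |\delta| + |\zeta| \ast |\Phi_{\text{odd}}|, \qquad |\Phi_{\text{even}}| \ast |\zeta| = |\delta| + |\Phi_{\text{odd}}| \ast |\zeta|.
\]
At the vector-space level subtraction is now available, so writing $|\mu| := |\Phi_{\text{even}}| - |\Phi_{\text{odd}}|$ and using bilinearity of $\ast$ we obtain $|\zeta| \ast |\mu| = |\delta| = |\mu| \ast |\zeta|$. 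Since $|\delta|$ is the unit for the numerical convolution product, this exhibits $|\mu|$ as a two-sided convolution inverse of $|\zeta|$, which is the assertion.

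The main obstacle is entirely in the first step: one must check that the Möbius conditions genuinely force the finiteness needed to take cardinality, and in particular that the even and odd sums remain locally finite after cardinality although they are infinite sums of linear functors at the objective level. This is where locally finite length is indispensable, since without it the alternating sum $|\Phi_{\text{even}}| - |\Phi_{\text{odd}}|$ need not even be defined. Once finiteness is secured, the remainder is a formal consequence of the monoidality of $|-|$ together with the fact that subtraction, not available at the objective level, is available at the numerical level.
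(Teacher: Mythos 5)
Your proposal is correct and follows essentially the same route as the paper, which itself only sketches this argument (deferring full details to \cite{GKT2}): the Möbius hypotheses make the map $\sum_n d_1^{n-1}\colon \sum_n \ora{X}_n \to X_1$ finite so that the Phi functors descend to $\infgrpd_{/X_1}\to\infgrpd$ by Proposition~\ref{prop:finitelinfunc}, one takes cardinality of the objective formula of Theorem~\ref{mobdecomp}, and subtraction at the vector-space level yields $|\zeta|\ast|\mu|=|\delta|=|\mu|\ast|\zeta|$. Your emphasis on pointwise finiteness of the even/odd sums, guaranteed by locally finite length, is exactly the point the paper relies on.
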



\subsection{Right and left convolutions}
\label{sub:convolution}

We introduce left and right convolution actions as dual to the comodule structures. Explicitly, given a right comodule configuration $C \to Y$, we get a right comodule $\infGrpd_{/C_{0}}$ over $\infGrpd_{/Y_{1}}$.
The \emph{right convolution} $\theta \star_r \beta$ of the two functors
$\theta: \infGrpd_{/C_{0}} \to \infGrpd$ and
$\beta: \infGrpd_{/Y_{1}} \to \infGrpd$,
given by the spans
 $C_{0} \xla{} M \xra{} 1$ and $Y_{1} \xla{} N \xra{} 1$, 
 is the composite of $\theta \otimes \beta$ with the right coaction $\gamma_r$:
\[\theta \star_r \beta: \infGrpd_{/C_{0}} \xra{\gamma_r} \infGrpd_{/C_{0}} \otimes \infGrpd_{/Y_{1}} \xra{\theta \otimes \beta} \infGrpd,\]
 where the tensor product $\theta \otimes \beta$ is given by the span 
$C_{0} \times Y_{1} \xla{} M \times N \xra{} 1$.

Similarly, given a left comodule configuration, we can define the \emph{left convolution} $\alpha \star_l \theta$ of $\alpha : \infGrpd_{/X_{1}} \to \infGrpd$ and $\theta: \infGrpd_{/C_{0}} \to \infGrpd$:
\[\alpha \star_l \theta: \infGrpd_{/C_{0}} \xra{\gamma_l} \infGrpd_{/X_{1}} \otimes \infGrpd_{/C_{0}} \xra{\alpha \otimes \theta} \infGrpd.\]

If we have a bicomodule configuration, then the following associativity formula expresses the compatibility of coactions from Theorem~\ref{thm:bicomodule}.

\begin{cor}\label{associativityconvol}
Given a bicomodule configuration, the convolutions defined above satisfy
\[
    \alpha \star_l (\theta \star_r \beta)
     \eq (\alpha \star_l \theta) \star_r \beta.
\]
\end{cor}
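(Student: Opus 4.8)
The plan is to deduce the corollary directly from the bicomodule coassociativity of Theorem~\ref{thm:bicomodule}, by unwinding the definitions of $\star_l$ and $\star_r$ and recognising the two sides of the claimed equivalence as the composite of one common linear functor with the two legs of the square
\[
(\Id_{\infGrpd_{/X_1}} \otimes \gamma_r) \circ \gamma_l \ \eq\ (\gamma_l \otimes \Id_{\infGrpd_{/Y_1}}) \circ \gamma_r
\]
that appears in the proof of that theorem (with $X_1 = B_{1,-1}$, $Y_1 = B_{-1,1}$, and $C_0 = B_{0,0}$).

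First I would expand the left-hand side. By definition of the left convolution, $\alpha \star_l (\theta \star_r \beta)$ is $\infGrpd_{/B_{0,0}} \xra{\gamma_l} \infGrpd_{/X_1} \otimes \infGrpd_{/B_{0,0}} \xra{\alpha \otimes (\theta \star_r \beta)} \infGrpd$. Substituting $\theta \star_r \beta = (\theta \otimes \beta) \circ \gamma_r$ and using bifunctoriality of $\otimes$ on linear functors (part of the symmetric monoidal Beck--Chevalley formalism recalled in Section~\ref{sub:pbk} and developed in \cite{GKT:HLA}), the composite $\alpha \otimes (\theta \star_r \beta)$ factors as $\infGrpd_{/X_1} \otimes \infGrpd_{/B_{0,0}} \xra{\Id \otimes \gamma_r} \infGrpd_{/X_1} \otimes \infGrpd_{/B_{0,0}} \otimes \infGrpd_{/Y_1} \xra{\alpha \otimes \theta \otimes \beta} \infGrpd$. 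Hence $\alpha \star_l (\theta \star_r \beta) \eq (\alpha \otimes \theta \otimes \beta) \circ (\Id \otimes \gamma_r) \circ \gamma_l$. Running the symmetric computation for the right-hand side gives $(\alpha \star_l \theta) \star_r \beta \eq (\alpha \otimes \theta \otimes \beta) \circ (\gamma_l \otimes \Id) \circ \gamma_r$, with the \emph{same} outer functor $\alpha \otimes \theta \otimes \beta \colon \infGrpd_{/X_1 \times B_{0,0} \times Y_1} \to \infGrpd$, namely the one induced by the product of the spans defining $\alpha$, $\theta$, and $\beta$.

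Then I would invoke Theorem~\ref{thm:bicomodule}: the two inner composites $(\Id \otimes \gamma_r) \circ \gamma_l$ and $(\gamma_l \otimes \Id) \circ \gamma_r$ are homotopy equivalent as linear functors $\infGrpd_{/B_{0,0}} \to \infGrpd_{/X_1 \times B_{0,0} \times Y_1}$ (this is exactly the commuting square in the proof of that theorem, valid at the $\pi_0$ level). Post-composing this equivalence with $\alpha \otimes \theta \otimes \beta$ gives $\alpha \star_l (\theta \star_r \beta) \eq (\alpha \star_l \theta) \star_r \beta$, as desired.

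The only point requiring care is the monoidal bookkeeping in the first step, i.e.\ rewriting $\alpha \otimes \big((\theta \otimes \beta) \circ \gamma_r\big)$ as $(\alpha \otimes \theta \otimes \beta) \circ (\Id \otimes \gamma_r)$; this uses associativity and functoriality of $\otimes$ for linear functors, together with the coherence of these equivalences, which is standard homotopy linear algebra and carries no content specific to bicomodules. All the genuine input, namely the stability pullbacks, has already been absorbed into Theorem~\ref{thm:bicomodule}, so this is the only potential source of friction and not a real obstacle.
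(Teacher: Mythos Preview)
Your proposal is correct and is exactly the intended argument: the paper states this as a corollary with no proof beyond the remark that it ``expresses the compatibility of coactions from Theorem~\ref{thm:bicomodule}'', and what you have written is precisely the unpacking of that sentence via the definitions of $\star_l$ and $\star_r$. The monoidal bookkeeping you flag is indeed the only routine step, and there is nothing further to add.
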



\subsection{Möbius inversion for (co)modules}
\label{sub:phifunctors}

Let $C \to Y$ be a comodule configuration.
The zeta functor 
\[\zeta^{C}: \infGrpd_{/C_{0}} \to \infGrpd\]
is the linear functor defined by the span
\[C_{0} \xleftarrow{=} C_{0} \xrightarrow{} 1.\]

Let $C \to Y$ be a right pointed comodule configuration. 
The augmented simplicial $\infty$-groupoid $C$ is an object of the functor $\infty$-category
\[
    \Fun{(\simplexcat\op_{\text{bot}}, \infGrpd)}
\]
where $\simplexcat_{\text{bot}}$ is the simplex category of finite linear orders with a specified bottom element, and with monotone maps preserving the bottom element.
The forgetful functor $\simplexcat_{\text{bot}} \to \simplexcat$ is right adjoint to the functor $j: \simplexcat \to \simplexcat_{\text{bot}}$ adding a bottom element.

\revised{
\begin{rmk}
    In the situation where $Y$ is Segal and $C = \Dec_\bot Y$, we can take $C_{-1}$ to be $Y_0$, with $d_0$ as augmentation map. By \cite[Lemma 6.1.3.16]{Lurie}, this is a colimit diagram.
\end{rmk}
}

A right pointed comodule configuration $f: C \to Y$ is \emph{complete} if the new degeneracies $s_{-1}: C_{n-1} \to C_{n}$ are monomorphisms.
\revised{
Since $s_{-1}$ is a monomorphism, we can identify $C_{-1}$ with a $\infty$-subgroupoid of $C_0$. We denote by $C_b$ its complement: $C_{0} = C_{-1} + C_b$. 
Denote by $C_{vw}$ the $\infty$-groupoid of simplices whose principal edges have the type indicated in the word $vw
$, where $v \in \{-1,0,b\}$ and $w$ is a word in the alphabet $\{0, 1,a\}$, that is the full $\infty$-subgroupoid of $C_n$ given by the pullback
}
\begin{center}
    \begin{tikzcd}
        C_{vw} \arrow[r, ""] 
               \arrow[d, ""'] 
               \ar[dr, phantom, "\lrcorner", very near start]
             & C_n \arrow[d, ""]\\
        C_{v} \times Y^{w} \arrow[r, ""'] & C_{0} \times (Y_1)^{n},
    \end{tikzcd}
\end{center}
\revised{where $n =|w| \geq 0$.}
The principal edges of the $\infty$-groupoid $C_n$ consist of an element in $C_0$ given by $(d_\top)^n$, and $n$ edges in $Y_1$, the principal edges of the image of $C_n$ by $f$.
In this situation, we define \revised{$\ora{C}_{n} = C_{ba\dots a} \subset C_{n}$} to be the full subgroupoid of simplices with all principal edges nondegenerate.
\revised{It is given by the pullback diagram}
\begin{center}
    \begin{tikzcd}
        C_{ba\dots a} \ar[r, ""] 
           \ar[d, ""'] 
           \ar[dr, phantom, "\lrcorner", very near start] & C_{n} \ar[d, ""]\\
        C_{b} \times Y^{a \dots a} \ar[r, ""'] & C_{0} \times Y_1^n.
    \end{tikzcd}
\end{center}


Define
\[\delta^{R}: \infGrpd_{/C_{0}} \to \infGrpd\]
to be the linear functor given by the span
\[C_{0} \xleftarrow{s_{-1}} C_{-1} \xrightarrow{} 1\]
and define the \emph{right Phi functors}
\[\Phi_{n}^{R}: \infGrpd_{/C_{0}} \to \infGrpd\]
to be the linear functors given by the spans
\[C_{0} \xleftarrow{} \ora{C}_{n} \xrightarrow{} 1.\]
If $n = -1$, 
$\ora{C}_{-1} = C_{-1}$ (by convention)
and $\Phi_{-1}^{R}$ is the linear functor $\delta^{R}$.


\begin{lem}\label{pbkC}
\revised{For every word $w$ in the alphabet $\{0, 1,a\}$}, the following square is a pullback:
    \begin{center}
        \begin{tikzcd}
            C_{0w} \ar[r, ""] 
               \ar[d, ""'] 
               \ar[dr, phantom, "\lrcorner", very near start] & C_{1} \ar[d, "(d_{\top}{,}f)"]\\
           C_{0} \times Y_w \ar[r, ""'] & C_{0} \times Y_1.
        \end{tikzcd}
    \end{center}
\end{lem}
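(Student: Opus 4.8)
The plan is to recognise both $C_{0w}$ and the claimed pullback as iterated pullbacks and then paste them using the prism lemma (Lemma~\ref{prismlemma}); the only genuine ingredient is that $f$ is cartesian on active maps. Write $n=|w|$, and let $\rho\colon C_n\to C_1$ and $\ell\colon Y_n\to Y_1$ be the operators induced by the unique active morphism $\phi\colon[1]\to[n]$ (so $\rho=d_1^{n-1}$, $\ell=d_1^{n-1}$ for $n\ge1$, and $\rho=s_0$, $\ell=s_0$ for $n=0$). First I would rewrite $C_{0w}$. By definition it is the pullback of $C_n$ and $C_0\times Y^w$ over $C_0\times(Y_1)^n$, where the map out of $C_0\times Y^w$ is $\id_{C_0}\times(Y^w\hookrightarrow(Y_1)^n)$, and the map $C_n\to C_0\times(Y_1)^n$ has components $(d_\top)^n$ and the principal edges of $f_n(-)$, i.e.\ the composite $C_n\xrightarrow{f_n}Y_n\to(Y_1)^n$. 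Since the $C_0$-leg of the bottom map is an identity it contributes nothing to the pullback, so $C_{0w}\simeq C_n\times_{(Y_1)^n}Y^w$; and because $C_n\to(Y_1)^n$ factors through $f_n$ while $Y_w=Y_n\times_{(Y_1)^n}Y^w$ by definition, Lemma~\ref{prismlemma} identifies $C_{0w}\simeq C_n\times_{f_n,\,Y_n}Y_w$, the pullback along $f_n$ of the full inclusion $Y_w\hookrightarrow Y_n$.

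Next I would split off $\rho$. Since $\rho$ and $\ell$ are induced by the active morphism $\phi$ and $f$ is culf, $f$ is cartesian with respect to $\phi$; hence the naturality square of $f$ for $\phi$ — with $C_n\to C_1$ (map $\rho$) on top, $Y_n\to Y_1$ (map $\ell$) on the bottom, and $f_n$, $f_1$ down the sides — is a pullback, i.e.\ $C_n\simeq C_1\times_{f_1,\,Y_1,\,\ell}Y_n$. Pasting this onto the previous step via Lemma~\ref{prismlemma},
\[
    C_{0w}\ \simeq\ C_n\times_{Y_n}Y_w\ \simeq\ \bigl(C_1\times_{Y_1}Y_n\bigr)\times_{Y_n}Y_w\ \simeq\ C_1\times_{Y_1}Y_w .
\]
Finally I would reinstate the inert $C_0$-coordinate. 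On $C_1$ one has $d_\top=d_1$ and on $C_n$ one has $(d_\top)^n=d_1\circ\rho$, so in the pullback $C_1\times_{C_0\times Y_1}(C_0\times Y_w)$ — formed from $(d_\top,f_1)\colon C_1\to C_0\times Y_1$ and $\id\times\ell\colon C_0\times Y_w\to C_0\times Y_1$ — the $C_0$-entry is determined by the $C_1$-entry, whence $C_1\times_{C_0\times Y_1}(C_0\times Y_w)\simeq C_1\times_{Y_1}Y_w$; one then checks that under all these identifications the comparison map $C_{0w}\to C_1\times_{C_0\times Y_1}(C_0\times Y_w)$ is the canonical one $x\mapsto\bigl(\rho x,\,((d_\top)^n x,\,f_n x)\bigr)$. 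Therefore the square of the statement is a pullback.

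The step needing the most care is not any of the pullback pastings, which are purely formal, but identifying the unlabelled arrows of the square: one must see that the bottom map is $\id\times\ell$ with $\ell$ the long-edge operator, and above all that the top map $C_{0w}\to C_1$ is the restriction of $\rho$ — the operator attached to the active morphism $[1]\to[n]$ — since it is precisely this that brings the culf hypothesis into play. (The Segal condition on $C$ and the decomposition-space condition on $Y$ appear not to be needed for this particular lemma.)
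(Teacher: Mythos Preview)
Your proof is correct and follows essentially the same route as the paper: both factor the square through $C_n$ and $Y_n$, use the defining pullbacks of $C_{0w}$ and $Y_w$ for the left piece, the culfness of $f$ for the right piece, and paste via the prism lemma. The only cosmetic difference is that the paper keeps the redundant $C_0$-factor in its $2\times3$ grid throughout, whereas you strip it out first and reinstate it at the end; your closing remarks about identifying the unlabelled arrows and the irrelevance of the Segal/decomposition-space hypotheses for this particular lemma are apt.
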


\begin{proof}
Let $n = |w|$.
The square is the top rectangle of the diagram

\begin{center}
  \begin{tikzcd}
      C_{0w} \ar[r, ""] \ar[d, ""'] 
        & C_{n} \ar[r, ""] \ar[d, ""] \ar[dr, phantom, "\lrcorner", very near start]
        & C_{1} \ar[d, "(d_{\top}{,}f)"]\\
      C_{0} \times Y_w \ar[r, ""'] \ar[d, ""] \ar[dr, phantom, "\lrcorner", very near start]
        & C_{0} \times Y_n \ar[r, ""] \ar[d, ""]
        & C_{0} \times Y_1\\
          C_{0} \times Y^{w} \ar[r, ""]
        & C_{0} \times Y_1^{n}.
        &
  \end{tikzcd}
\end{center}
The bottom square and left-hand rectangle are pullbacks by definition of $Y_{w}$ and $C_{0w}$, hence the top left-hand square is a pullback.
The right-hand square is a pullback because the augmentation map $C \to Y$ is culf.
Hence the top rectangle, which is the desired square, is a pullback.
\end{proof}



Given a complete decomposition space $Y$, we denote $\Phi_{n}^Y : \infGrpd_{/Y_{1}} \to \infGrpd$ the usual Phi functors, see \ref{sub:moreprelim} above.

\begin{prop}\label{prop:Mobformula}
    The right Phi functors satisfy 
    \[\zeta^{C}\star_r \Phi_{n}^Y \eq \Phi_{n-1}^{R} + \Phi_{n}^{R}.\]
\end{prop}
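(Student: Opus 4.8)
The plan is to unfold both sides as linear functors given by explicit spans and identify them via Beck--Chevalley and the pullback lemma, following the exact pattern of the proof of Proposition~\ref{comodule}. Recall $\zeta^C$ is given by $C_0 \xleftarrow{=} C_0 \to 1$ and $\Phi_n^Y$ by $Y_1 \xleftarrow{d_1^{n-1}} \ora{Y}_n \to 1$. The right coaction $\gamma_r$ is given by $C_0 \xleftarrow{d_0} C_1 \xrightarrow{(d_1,f_1)} C_0 \times Y_1$. So the composite $\zeta^C \star_r \Phi_n^Y$ is computed by pulling back along $\gamma_r$ and then along $\zeta^C \otimes \Phi_n^Y$; concretely, one forms the pullback of $C_1 \xrightarrow{(d_1,f_1)} C_0 \times Y_1$ against $C_0 \times \ora{Y}_n \to C_0 \times Y_1$ (using $=$ on the $C_0$ factor and $d_1^{n-1}$ on the $Y_1$ factor), and the apex of that pullback maps to $C_0$ via $d_0$ and to $1$. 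The claim $\zeta^C\star_r \Phi_n^Y \eq \Phi_{n-1}^R + \Phi_n^R$ will follow once I identify this apex, as an $\infty$-groupoid over $C_0$, with $\ora{C}_{n-1} + \ora{C}_n$ (with the maps $s_{-1}$-type structure maps, i.e. the maps induced by $d_\top$-powers).

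The key step is therefore to compute the pullback
\[
\begin{tikzcd}
    P \ar[r] \ar[d] \ar[dr, phantom, "\lrcorner", very near start] & C_1 \ar[d, "{(d_1,f_1)}"] \\
    C_0 \times \ora{Y}_n \ar[r, "{=\,\times\,d_1^{n-1}}"'] & C_0 \times Y_1,
\end{tikzcd}
\]
and to recognise that $P$ breaks up as a disjoint sum indexed by whether the edge being adjoined (the one living over $C_0$ via $d_1$, resp. contributing a principal edge) is degenerate or not. The essential input is Lemma~\ref{pbkC}: it identifies the pullback of $C_n$ along $C_0 \times Y_w \to C_0 \times Y_1$ (via $(d_\top, f)$, i.e. the last-vertex map together with $f$) with $C_{0w}$. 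Taking $w = a\dots a$ of length $n-1$ shows that pulling $C_1$ back against $C_0 \times \ora{Y}_{n-1}$ gives exactly $C_{0\,a\dots a} = C_{n}$ restricted to those simplices whose last $n-1$ principal edges are nondegenerate, but whose first principal edge is unconstrained. Now decompose the first principal edge of such an $n$-simplex into the degenerate part and the nondegenerate part, using completeness of $Y$ (so $Y_1 \eq Y_0 + Y_a$) together with completeness of the pointed comodule configuration (so $C_0 \eq C_{-1} + C_b$): the degenerate-first-edge part is precisely $\ora{C}_{n-1}$ (the first principal edge collapses, so we drop down a dimension; here is where the pointing matters — a degenerate principal edge of the comodule must be recognised via $s_{-1}$, landing in $C_{-1}$), and the nondegenerate-first-edge part is precisely $\ora{C}_n = C_{b\,a\dots a}$. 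This is the decomposition $P \eq \ora{C}_{n-1} + \ora{C}_n$ over $C_0$.

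The main obstacle I anticipate is bookkeeping the principal-edge conventions correctly: the word $w$ that indexes $\ora{Y}_n$ has length $n$, the span for $\Phi_n^Y$ uses $d_1^{n-1}$ (the long face map to the "first" edge, or rather the iterated face that picks out the spine as a single edge), and one must check that pulling this back along $(d_1, f_1): C_1 \to C_0 \times Y_1$ interacts correctly with the $d_\top$-based structure maps that define $\ora{C}_k$. In particular one should verify that the relevant face/degeneracy identities make the square in Lemma~\ref{pbkC} applicable with the intended $w$, and that the splitting $C_1 \eq C_{-1} + C_b + (\text{mixed})$ — wait, rather $C_0 \eq C_{-1} + C_b$ pulled up — genuinely separates the "degenerate first principal edge" locus from its complement inside the pullback. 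Handling the boundary case $n = 0$ (where $\Phi_0^Y$ is $\delta$-like and $\Phi_{-1}^R = \delta^R$) will require the convention $\ora{C}_{-1} = C_{-1}$ and a small separate check, but it should reduce to the culf condition on $f$ exactly as in Lemma~\ref{pbkC}. Once the pullback is identified, the conclusion is immediate from the Beck--Chevalley rule, since disjoint sums of spans compose to sums of the induced linear functors.
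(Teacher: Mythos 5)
Your proposal is correct and follows essentially the same route as the paper: compute the pullback in the convolution via Lemma~\ref{pbkC} with $w=a\cdots a$ of length $n$, identify the apex as $C_{0a\cdots a}$, and split it as $C_{-1a\cdots a}+C_{ba\cdots a}\eq\ora{C}_{n-1}+\ora{C}_{n}$ using completeness of the pointing ($C_0\eq C_{-1}+C_b$). The only blemish is the off-by-one slip in your second paragraph (``$w=a\dots a$ of length $n-1$'', ``pulling back against $C_0\times\ora{Y}_{n-1}$''), which contradicts your own correct setup against $C_0\times\ora{Y}_{n}$ in the first paragraph; with $|w|=n$ the argument is exactly the paper's.
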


\begin{proof}
Compute the convolution action $\zeta^{C}\star_r \Phi_{n}^Y$ by Lemma~\ref{pbkC} as:

\begin{center}
    \begin{tikzcd}[column sep=large]
    C_{0} & & \\
        C_{1}  \arrow[u, ""]
           \arrow[d, ""'] 
             & C_{0a\dots a} \ar[dl, phantom, "\llcorner", very near start] 
                         \arrow[l, ""] 
                         \arrow[d, ""] 
                         \arrow[ul, ""] 
                         \arrow[dr, ""]& \\
        C_{0} \times Y_1  & C_{0} \times \ora{Y_n} \arrow[l, ""'] \arrow[r, ""'] & 1.
    \end{tikzcd}
\end{center}
But $C_{0 a \dots a} \eq C_{-1 a \dots a} + C_{b a \dots a} \eq \ora{C}_{n-1} + \ora{C}_{n}$.
This is an equivalence of $\infty$-groupoids over $C_{0}$ and the resulting span is $\Phi_{n-1}^{R} + \Phi_{n}^{R}$.
\end{proof}

Denote 
\[
    \Phi_{\text{even}}^Y := \sum_{n \text{ even}} \Phi_{n}^Y, \quad \Phi_{\text{odd}}^Y := \sum_{n \text{ odd}} \Phi_{n}^Y.
\]
The previous proposition implies the following Möbius inversion formula.

\begin{thm}\label{thm:Mobinversion}
    Given $C \to Y$ a complete right pointed comodule configuration, 
    \[\zeta^{C} \star_r \Phi_{\text{even}}^Y \eq \delta^{R} + \zeta^{C} \star_r \Phi_{\text{odd}}^Y.\]
\end{thm}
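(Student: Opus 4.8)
The plan is to reduce the claim to Proposition~\ref{prop:Mobformula} and then telescope the resulting sum of $\infty$-groupoids. First I would observe that, by definition, $\Phi_{\text{even}}^Y$ is the linear functor given by the span $Y_1 \leftarrow \sum_{n\text{ even}} \ora{Y_n} \to 1$, and that $\zeta^{C}\star_r(-)$ is built out of pullback and postcomposition functors between slice $\infty$-categories of $\infGrpd$ (Section~\ref{sub:pbk}), all of which preserve coproducts. Hence
\[
    \zeta^{C}\star_r \Phi_{\text{even}}^Y \;\eq\; \sum_{n\text{ even}} \big(\zeta^{C}\star_r \Phi_{n}^Y\big),
\]
and similarly for the odd part, so it suffices to treat each summand. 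By Proposition~\ref{prop:Mobformula}, $\zeta^{C}\star_r \Phi_{n}^Y \eq \Phi_{n-1}^{R} + \Phi_{n}^{R}$.

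Next I would carry out the telescoping. Summing over even $n\ge 0$, the term $n=0$ contributes $\Phi_{-1}^{R}+\Phi_{0}^{R}\eq\delta^{R}+\Phi_{0}^{R}$ by the convention $\Phi_{-1}^{R}=\delta^{R}$, while the remaining pairs $(\Phi_{1}^{R},\Phi_{2}^{R}),(\Phi_{3}^{R},\Phi_{4}^{R}),\dots$ enumerate every $\Phi_{m}^{R}$ with $m\ge 1$ exactly once; hence
\[
    \zeta^{C}\star_r \Phi_{\text{even}}^Y \;\eq\; \delta^{R} + \sum_{m\ge 0}\Phi_{m}^{R}.
\]
Running the identical computation over odd $n\ge 1$, the pairs $(\Phi_{0}^{R},\Phi_{1}^{R}),(\Phi_{2}^{R},\Phi_{3}^{R}),\dots$ enumerate every $\Phi_{m}^{R}$ with $m\ge 0$ exactly once, so $\zeta^{C}\star_r \Phi_{\text{odd}}^Y \eq \sum_{m\ge 0}\Phi_{m}^{R}$. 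Comparing the two displays gives $\zeta^{C}\star_r \Phi_{\text{even}}^Y \eq \delta^{R} + \zeta^{C}\star_r \Phi_{\text{odd}}^Y$, as claimed.

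The one point that warrants care --- the main, if mild, obstacle --- is the bookkeeping of the infinite coproducts: one must verify that the reindexing $\sum_{n\text{ even},\,n\ge 0}\big(\ora{C}_{n-1}+\ora{C}_{n}\big) \eq \ora{C}_{-1} + \sum_{m\ge 0}\ora{C}_{m}$ is a genuine equivalence of $\infty$-groupoids over $C_{0}$, which it is, since it merely rebrackets the indexing set of a disjoint union; and, in the same vein, that $\zeta^{C}\star_r(-)$ genuinely commutes with the coproduct defining $\Phi_{\text{even}}^Y$, which is the Beck--Chevalley calculus of linear functors recalled in Section~\ref{sub:pbk}. No local-finiteness hypothesis is needed at this stage; that enters only later, in Section~\ref{sub:Mobbicomod}, when passing to homotopy cardinality.
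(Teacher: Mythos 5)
Your proposal is correct and follows essentially the same route as the paper: the paper's proof is precisely the telescoping of $\zeta^{C}\star_r\Phi_n^Y \eq \Phi_{n-1}^R+\Phi_n^R$ from Proposition~\ref{prop:Mobformula} into $\Phi_{-1}^R+\Phi_0^R+\Phi_1^R+\cdots$, with the commutation of the convolution with coproducts and the reindexing left implicit. Your additional care about these two bookkeeping points is sound but does not change the argument.
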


\begin{proof}
The two linear functors are equivalent to the sum of the right Phi functors:
    \[\zeta^{C} \star_r \Phi_{\text{even}}^Y 
    \eq \Phi_{-1}^{R} + \Phi_{0}^{R} + \Phi_{1}^{R} + \cdots
    \eq \delta^{R} + \zeta^{C} \star_r \Phi_{\text{odd}}^Y.\]
\end{proof}

We can also define a \emph{left pointed} comodule configuration $D \to X$, with new top sections instead of bottom: we consider instead the mapping cylinder of $\simplexcat \to \simplexcat_{\text{top}}$, where $\simplexcat_{\text{top}}$ is the simplex category of finite linear orders with a specified top element, and with monotone maps preserving the top element.
A left pointed comodule configuration is \emph{complete} if the new degeneracies $t_{\top+1} : D_{n-1} \to D_n$ are monomorphisms. Similarly, we define the \emph{left Phi functors} and $\delta^{L}$ using $t_{\top+1}$ and $e_{\top}$ and we obtain the following formula.

\begin{thm}\label{thm:Mobinversionleft}
    Given $D \to X$ a complete left pointed comodule configuration,
    \[\Phi_{\text{even}}^X \star_l \zeta^{D} \eq \delta^{L} +  \Phi_{\text{odd}}^X \star_l \zeta^{D}.\]
\end{thm}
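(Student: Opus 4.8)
The statement is the exact mirror image of Theorem~\ref{thm:Mobinversion}, with bottom replaced by top throughout. The plan is therefore to carry the whole story through once more in the ``top'' setting, but the honest thing is to observe that it follows formally by a duality argument, so that essentially nothing new has to be checked. Concretely, I would first set up the simplicial duality functor $(-)\op$ on augmented simplicial $\infty$-groupoids, which sends the opposite linear order to itself but swaps $d_\bot \leftrightarrow d_\top$, $s_\bot \leftrightarrow s_\top$, and the adjunction $j\colon \simplexcat \to \simplexcat_{\text{bot}}$ to $\simplexcat \to \simplexcat_{\text{top}}$. Under this duality, a left pointed comodule configuration $D \to X$ becomes a right pointed comodule configuration $D\op \to X\op$, completeness of the former (the $t_{\top+1}$ are monomorphisms) corresponds to completeness of the latter (the $s_{-1}$ are monomorphisms), and the left convolution $\star_l$ is taken to the right convolution $\star_r$ for the opposite data. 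The functors $\delta^L$, $\zeta^D$, $\Phi^X_n$ are taken to $\delta^R$, $\zeta^{D\op}$, $\Phi^{X\op}_n$ respectively, because the defining spans are built from exactly the face and degeneracy maps that the duality swaps.

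**Key steps, in order.** (1) Record that $X\op$ is again a (complete) decomposition space and that $D\op \to X\op$ is again a complete right pointed comodule configuration; this is immediate from the pullback formulations, since the decomposition-space and culf conditions are stable under $(-)\op$ and completeness is a monomorphism condition that is symmetric. (2) Identify, span by span, the images under $(-)\op$ of the linear functors in play: $\delta^L \mapsto \delta^R$ (the span $D_0 \xleftarrow{t_{\top+1}} D_{-1} \to 1$ becomes $D\op_0 \xleftarrow{s_{-1}} D\op_{-1} \to 1$), $\zeta^D \mapsto \zeta^{D\op}$ (the identity span is self-dual), and the left Phi functor $\Phi^X_n$, given by $X_1 \xleftarrow{e_\top^{\,n-1}} \ora{D}_n \to 1$ with $\ora D_n$ the full subgroupoid of top-nondegenerate simplices, becomes the right Phi functor $\Phi^{X\op}_n$, since $e_\top$ dualizes to $d_1$ (the relevant inner face after the outer ones are removed) and ``all principal edges top-nondegenerate'' dualizes to ``all principal edges nondegenerate'' for the opposite decomposition space. (3) Apply Theorem~\ref{thm:Mobinversion} to $D\op \to X\op$: it gives $\zeta^{D\op}\star_r \Phi^{X\op}_{\text{even}} \eq \delta^R + \zeta^{D\op}\star_r \Phi^{X\op}_{\text{odd}}$. (4) Transport this equivalence back along $(-)\op$, using that $(-)\op$ is an equivalence of $\infty$-categories compatible with the relevant slices, tensor products, and comodule coactions, so that it sends the associativity/Beck--Chevalley data used in the proof of Theorem~\ref{thm:Mobinversion} to the corresponding data on the ``top'' side; the result is exactly $\Phi^X_{\text{even}} \star_l \zeta^D \eq \delta^L + \Phi^X_{\text{odd}} \star_l \zeta^D$.

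**Alternative (direct) route.** If one prefers not to invoke duality, the proof of Theorem~\ref{thm:Mobinversion} copies verbatim: one proves a left analogue of Lemma~\ref{pbkC} (for every word $w$ the square with corner $D_{w0}$ over $D_1 \xleftarrow{(f,d_\bot)} \cdots$ is a pullback, using that $f\colon D \to X$ is culf), then a left analogue of Proposition~\ref{prop:Mobformula}, $\Phi^X_n \star_l \zeta^D \eq \Phi^L_{n-1} + \Phi^L_n$, obtained by computing the convolution as a span, decomposing $D_{a\dots a0} \eq D_{a\dots a,-1} + D_{a\dots a,b} \eq \ora D_{n-1} + \ora D_n$ using completeness, and finally telescoping the resulting sum over $n$ exactly as in Theorem~\ref{thm:Mobinversion}.

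**Main obstacle.** The only subtlety is bookkeeping: making sure that $(-)\op$ really does send $e_\top$-based spans to $d_1$-based spans \emph{after} the outer coface maps have been deleted in the décalage/pointing construction, i.e. that ``top-pointed'' genuinely dualizes to ``bottom-pointed'' at the level of the indexing categories $\simplexcat_{\text{top}}$ and $\simplexcat_{\text{bot}}$ and their mapping cylinders. Once that identification of shapes is pinned down, everything else is formal and there is no new homotopy-theoretic content to check; in particular no new coherence issue arises beyond the $\pi_0$-level structure already granted by Proposition~\ref{comodule} and Theorem~\ref{thm:bicomodule}.
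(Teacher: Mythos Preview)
Your proposal is correct, and your ``alternative (direct) route'' is exactly what the paper does: it gives no separate proof, merely states that one defines the left Phi functors and $\delta^L$ using $t_{\top+1}$ and $e_\top$ and that the analogous argument yields the formula. Your primary route via the simplicial duality $(-)\op$ is also sound but more elaborate than needed.

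One small slip in your step~(2): you describe $\Phi^X_n$ as given by the span $X_1 \xleftarrow{e_\top^{\,n-1}} \ora{D}_n \to 1$, but $\Phi^X_n$ is the ordinary Phi functor of the decomposition space $X$, given by $X_1 \xleftarrow{d_1^{\,n-1}} \ora{X}_n \to 1$; you have conflated it with the \emph{left Phi functors} $\Phi^L_n$ (the analogue of $\Phi^R_n$), which are the ones built from $\ora{D}_n$ and live over $D_0$. This does not affect the argument: under $(-)\op$ the inner face map $d_1$ is self-dual and nondegeneracy of principal edges is preserved, so $\Phi^X_n$ indeed corresponds to $\Phi^{X\op}_n$, and likewise $\Phi^L_n$ corresponds to $\Phi^R_n$ for the opposite configuration.
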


\subsection{Möbius bicomodule configurations and the Rota formula}
\label{sub:Mobbicomod}

In order to take homotopy cardinality  to recover the usual Möbius inversions, we need to impose some finiteness conditions. We adapt the approach of \cite{GKT2} summarised in \ref{sub:moreprelim} and \ref{sub:complete} above.

A \emph{right Möbius comodule configuration} is a complete right pointed comodule configuration $C \to Y$ such that the decomposition space $Y$ is Möbius and the augmented comodule is Möbius, that is
\begin{itemize}
    \item $C$ is locally finite: the $\infty$-groupoid $C_0$ is locally finite and both $s_{-1}$ and $d_0$ are finite maps;
    \item $C$ is of locally finite length: every edge has a finite length, that is for all $a \in C_0$, the fibres of $d_0^{(n)}: \ora{C}_{n} \to C_0$ over $a$ are empty for $n$ sufficiently large.
\end{itemize}

Under these conditions, the Phi functors descend to 
\[
\Phi_{n}^{R} : \infgrpd_{/C_{0}} \to \infgrpd
\]
and we can now take the cardinality of the “Möbius formulas” (Theorems~\ref{thm:Mobinversion} and \ref{thm:Mobinversionleft}).

Similarly we define a \emph{left Möbius comodule configuration} to be a complete left pointed comodule configuration $D \to X$ such that the decomposition space $X$ is Möbius and the augmented comodule is Möbius, using $t_{\top+1}$ and $d_{\top}$.

\begin{thm}\label{cardmobform}
Given $C \to Y$ a  right Möbius comodule configuration and $D \to X$ a left Möbius comodule configuration,
    \[
    |\zeta^{C}| \star_r |\mu^Y|  = |\delta^{R}|, \qquad  |\mu^X| \star_l |\zeta^{D}|  = |\delta^{L}|,\]
where $|\mu^Y| := |\Phi^Y_{\text{even}}| - |\Phi^Y_{\text{odd}}|$
and $|\mu^X| := |\Phi^X_{\text{even}}| - |\Phi^X_{\text{odd}}|$.
\end{thm}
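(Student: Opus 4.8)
The plan is to obtain the cardinality-level identities by simply applying homotopy cardinality to the objective Möbius inversion formulas already established in Theorems~\ref{thm:Mobinversion} and \ref{thm:Mobinversionleft}, once the finiteness hypotheses guarantee that everything in sight descends to $\infgrpd$. First I would record that under the hypothesis that $C \to Y$ is a right Möbius comodule configuration, the span $C_0 \xla{} \ora{C}_n \xra{} 1$ is finite for each $n$, and that the combined map $\sum_n \ora{C}_n \to C_0$ is finite, using local finiteness of $C_0$ together with the locally-finite-length condition (the fibres of $d_0^{(n)}$ over any $a \in C_0$ are eventually empty), exactly as in the analogous statement for Möbius decomposition spaces in Section~\ref{sub:complete}. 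By Proposition~\ref{prop:finitelinfunc} this means $\Phi_n^R$ (and $\delta^R$, $\zeta^C$) restrict to finite linear functors $\infgrpd_{/C_0} \to \infgrpd$, so one may take their cardinalities to obtain genuine linear maps $\Q_{\pi_0 C_0} \to \Q$. The same applies on the left with $D \to X$.

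Next I would apply $|{-}|$ to the equivalence $\zeta^{C} \star_r \Phi_{\text{even}}^Y \eq \delta^{R} + \zeta^{C} \star_r \Phi_{\text{odd}}^Y$ of Theorem~\ref{thm:Mobinversion}. Cardinality is additive over coproducts of $\infty$-groupoids and is monoidal, so it converts the right convolution $\star_r$ of (finite) linear functors into the corresponding numerical right convolution of their cardinalities; hence $|\zeta^C| \star_r |\Phi^Y_{\text{even}}| = |\delta^R| + |\zeta^C| \star_r |\Phi^Y_{\text{odd}}|$. Because $Y$ is Möbius, $|\Phi^Y_{\text{even}}|$ and $|\Phi^Y_{\text{odd}}|$ are individually finite, so one may legitimately subtract to get
\[
    |\zeta^C| \star_r \bigl(|\Phi^Y_{\text{even}}| - |\Phi^Y_{\text{odd}}|\bigr) = |\delta^R|,
\]
which is the first claimed identity by the definition $|\mu^Y| := |\Phi^Y_{\text{even}}| - |\Phi^Y_{\text{odd}}|$. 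The second identity follows symmetrically from Theorem~\ref{thm:Mobinversionleft} applied to $D \to X$, using the left convolution $\star_l$ and the definition of $|\mu^X|$.

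The main obstacle is not the algebraic manipulation but the justification that one may split the alternating sums: a priori Theorems~\ref{thm:Mobinversion} and \ref{thm:Mobinversionleft} only give equivalences of the two combined even/odd functors, and $\zeta^C \star_r \Phi^Y_{\text{even}}$ need not be finite as a functor even though the difference is. The point where the Möbius (locally-finite-length) hypothesis on both $Y$ and the augmented comodule $C$ is essential is precisely here: it ensures each $|\Phi_n^R|$ is a well-defined finite quantity and that for each fixed basis element only finitely many $n$ contribute, so the numerical series $\sum_n |\Phi_n^R|$ converges and the even and odd parts are separately finite, legitimising the subtraction. I would make this explicit by noting that, evaluated on a basis element $\name{a}$ with $a \in C_0$, the fibre $(\ora{C}_n)_a$ is empty for $n \gg 0$, so only finitely many terms of $\Phi_{\text{even}}^R$ and $\Phi_{\text{odd}}^R$ are nonzero at $a$; the same argument on the $Y$-side controls $|\mu^Y|$. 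With that finiteness in hand, everything else is a routine application of the monoidality and additivity of homotopy cardinality together with Proposition~\ref{prop:finitelinfunc}.
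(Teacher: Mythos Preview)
Your proposal is correct and follows the same route as the paper: the paper does not give a formal proof environment for this theorem, but the text immediately preceding it explains that the Möbius hypotheses make the Phi functors descend to $\infgrpd_{/C_0}\to\infgrpd$, after which one ``take[s] the cardinality of the `Möbius formulas' (Theorems~\ref{thm:Mobinversion} and \ref{thm:Mobinversionleft})''. Your write-up is in fact more careful than the paper's, since you make explicit the point that the locally-finite-length hypothesis is what legitimises splitting into even and odd parts and subtracting; the paper leaves this implicit.
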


A \emph{Möbius bicomodule configuration} is a bicomodule configuration with two pointings such that both left and right comodule configurations are Möbius.
It hence has extra degeneracy maps in both directions, extra bottom degeneracy maps in horizontal direction and extra top degeneracy maps in vertical direction.

Note that given a Möbius bicomodule configuration $B$, the zeta functors are defined only on the $\infty$-groupoid $B_{0,0}$ and then are the same for the two comodules.
In both cases it is given by the span
\[B_{0,0} \xleftarrow{=} B_{0,0} \xrightarrow{} 1.\]

\begin{thm}\label{rotaformulabicomod}
Given a Möbius bicomodule configuration $B$ with $X := B_{\bullet, -1}$ and $Y:=B_{-1, \bullet}$, we have
\[
    |\mu^{X}| \star_l |\delta^R| = |\delta^{L}| \star_r |\mu^Y|,
\]
where $\delta^R$ is the linear functor given by the span
\[
    B_{0,0} \xleftarrow{} X_{0} \xrightarrow{} 1,
\]
and $\delta^{L}$ is the linear functor given by the span
\[
    B_{0,0} \xleftarrow{} Y_{0} \xrightarrow{} 1.
\]
\end{thm}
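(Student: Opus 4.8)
The strategy is to assemble the statement from three ingredients already at our disposal: the associativity of the two convolution actions (Corollary~\ref{associativityconvol}), and the two one-sided cardinality Möbius formulas (Theorem~\ref{cardmobform}) applied to the two pointings of $B$. First I would observe that a Möbius bicomodule configuration $B$ carries, by definition, a right pointing making $C := B_{\bullet,\bullet}$ (read as a right pointed comodule configuration over $Y = B_{-1,\bullet}$) into a right Möbius comodule configuration, and a left pointing making it into a left Möbius comodule configuration over $X = B_{\bullet,-1}$; in both cases the base of the comodule is $B_{0,0}$, and the two zeta functors $\zeta^C$ coincide, both being given by the span $B_{0,0} \xleftarrow{=} B_{0,0} \to 1$. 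I would also note that, under the identifications of Section~\ref{sec:adj}, the augmentation $C_{-1}$ of the right pointed comodule is $X_0 = B_{0,-1}$ and the new bottom section $s_{-1}: B_{0,-1} \to B_{0,0}$ is the relevant map, so that $\delta^R$ is indeed the span $B_{0,0} \xleftarrow{} X_0 \to 1$ as in the statement; symmetrically $\delta^L$ is the span $B_{0,0} \xleftarrow{} Y_0 \to 1$.

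With the bookkeeping in place, the computation is the conceptual three-line proof advertised in the introduction. Starting from $|\mu^X| \star_l |\delta^R|$, I would first apply Corollary~\ref{associativityconvol} (taking cardinality, which is compatible with the convolution actions since all spans involved are finite by the Möbius hypotheses, via Proposition~\ref{prop:finitelinfunc}) — but to apply associativity I need $|\delta^R|$ written as a right convolution. Here is the key point: by the right-hand cardinality formula of Theorem~\ref{cardmobform}, $|\delta^R| = |\zeta^C| \star_r |\mu^Y|$. Substituting, $|\mu^X| \star_l |\delta^R| = |\mu^X| \star_l (|\zeta^C| \star_r |\mu^Y|)$, and now Corollary~\ref{associativityconvol} rewrites this as $(|\mu^X| \star_l |\zeta^C|) \star_r |\mu^Y|$. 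Finally, the left-hand cardinality formula of Theorem~\ref{cardmobform} gives $|\mu^X| \star_l |\zeta^C| = |\delta^L|$, so the expression becomes $|\delta^L| \star_r |\mu^Y|$, which is exactly the right-hand side of the claimed identity.

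The one point requiring genuine care — and the step I expect to be the main obstacle — is the compatibility of Corollary~\ref{associativityconvol} with cardinality, i.e.\ the passage from the $\infty$-groupoid-level associativity to an identity at the $\Q$-vector-space level. The corollary as stated is an equivalence of linear functors on slice $\infty$-categories, established only at the $\pi_0$-level; I would need to check that all the spans occurring in the proof of Theorem~\ref{thm:bicomodule} (the bisimplicial structure maps of $B$ and the augmentations) are finite, so that the functors restrict to the finite slices $\infgrpd_{/-}$ and cardinality may be applied. The finiteness of the comodule legs $d_0$, $s_{-1}$ (and $e_\top$, $t_{\top+1}$) is part of the Möbius hypothesis, and the finiteness of the remaining maps follows from $Y$ and $X$ being Möbius decomposition spaces together with stability (pullbacks of finite maps are finite); I would spell this out just enough to justify that $|\cdot|$ turns the diagram of Theorem~\ref{thm:bicomodule} into a commuting diagram of $\Q$-linear maps, whence the three displayed equalities are genuine equalities of vectors in $\Q_{\pi_0 B_{0,0}}$ rather than mere equivalences. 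Everything else is the formal substitution above.
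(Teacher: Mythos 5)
Your proposal is correct and follows exactly the paper's own argument: substitute $|\delta^R| = |\zeta|\star_r|\mu^Y|$ by Theorem~\ref{cardmobform}, apply the associativity of Corollary~\ref{associativityconvol} (after taking cardinality), and conclude with $|\mu^X|\star_l|\zeta| = |\delta^L|$. The additional care you devote to finiteness and the passage from the $\pi_0$-level equivalence to an identity of $\Q$-linear maps is a sensible elaboration of a point the paper leaves implicit, not a different route.
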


\begin{proof}
Using the Möbius formulas at the algebraic level from Theorem~\ref{cardmobform}, and the associativity of the convolution actions from Proposition~\ref{associativityconvol}, we compute
    \begin{align*}
      |\mu^X| \star_l |\delta^R| 
        &=  |\mu^X| \star_l (|\zeta| \star_r |\mu^Y|) \\
        &= (|\mu^X| \star_l |\zeta|) \star_r |\mu^Y| \\
        &= |\delta^{L}| \star_r |\mu^Y|.
    \end{align*}
\end{proof}


\subsection{Möbius bicomodule configurations from adjunctions of Möbius decomposition spaces}
\label{sub:runexMob}

We saw in Section~\ref{sub:cartsset} that given a cocartesian fibration $p : \mathcal{M} \to \Delta^1$ between decomposition spaces, we obtain a right comodule configuration $B$, with diagonal maps $B_{i,j-1} \to B_{i-1,j}$ and new sections $s_{-1}: B_{i,j-1} \to B_{i,j}$, for $i \ge 0 $ given by the composition with a diagonal map.

\begin{lem}\label{runexcomplete}
     Given a cocartesian fibration $p: \mathcal{M} \to \Delta^1$ between decomposition spaces, suppose moreover that $\mathcal{M}$ is complete. Then the associated right pointed comodule configuration is complete.
\end{lem}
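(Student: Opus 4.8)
The plan is to unwind what completeness means for the right pointed comodule configuration $B$ associated to a cocartesian fibration $p:\mathcal M\to\Delta^1$, and reduce it to the completeness of $\mathcal M$ itself, i.e.\ the fact that $s_0\colon \mathcal M_0\to\mathcal M_1$ is a monomorphism. By definition, the configuration is complete iff the new bottom sections $s_{-1}\colon B_{i,j-1}\to B_{i,j}$ (coming from composition with the diagonal map $d\colon\overline{[i,j-1]}\to\overline{[i-1,j]}$, reindexed to land in $B_{i,j}$) are monomorphisms; since completeness of the comodule configuration is really a statement about the horizontal comodule structure, the case $i=0$ is the one that matters, so I would focus on showing $s_{-1}\colon B_{0,j-1}\to B_{0,j}$ is a monomorphism for all $j\ge 0$, and note the general $i$ is handled identically (or reduces to $i=0$ by the fibre-sequence description below).

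First I would recall, from the fibre sequence
\begin{center}
\begin{tikzcd}
    B_{i,j} \arrow[r] \arrow[d] \arrow[dr, phantom, "\lrcorner", very near start] & \Map(\Delta^{i+1+j}, \mathcal{M}) \arrow[d, "\text{post }p"]\\
    1 \arrow[r, "\name{[i,j]}"'] & \Map(\Delta^{i+1+j}, \Delta^1),
\end{tikzcd}
\end{center}
that $B_{i,j}$ is a (homotopy) full sub-$\infty$-groupoid of $\Map(\Delta^{i+1+j},\mathcal M)=\mathcal M_{i+1+j}$, picked out by the condition that the composite to $\Delta^1$ equals the fixed simplex $[i,j]$. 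Under this identification, the new bottom section $s_{-1}\colon B_{i,j-1}\to B_{i,j}$ is the restriction of a genuine degeneracy map $s_k\colon \mathcal M_{i+j}\to\mathcal M_{i+j+1}$ of $\mathcal M$ — concretely the one doubling the vertex at the position where the diagonal $d$ inserts a new black dot, which is the $(i+1)$-st vertex (the last black vertex). Indeed, that is exactly what the picture on page~\pageref{picturefilled} encodes: the diagonal map fills in the missing square by a degenerate edge. So I would verify, by chasing the simplicial identities for $\overline{\simplexcat_{/\Delta^1}}$ spelled out before the cocartesian nerve, that $s_{-1}$ is the pullback of $s_{k}\colon\mathcal M_{i+j}\to\mathcal M_{i+j+1}$ along the inclusion $B_{i,j-1}\hookrightarrow\mathcal M_{i+j}$ (the two fixed-$\Delta^1$-structure constraints being compatible because $s_k$ covers a degeneracy of $\Delta^1$ that sends $[i,j-1]$ to $[i,j]$).

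The main step is then the following: a monomorphism of $\infty$-groupoids is stable under pullback, so it suffices that $s_k\colon\mathcal M_{i+j}\to\mathcal M_{i+j+1}$ be a monomorphism. For a complete decomposition space $\mathcal M$ this is standard: completeness says $s_0\colon\mathcal M_0\to\mathcal M_1$ is a monomorphism, and by the decomposition-space axioms (the degeneracy pullback squares relating $s_k$ on $\mathcal M_n$ to $s_0$ on $\mathcal M_1$, exactly as used in \cite[\S 2]{GKT2}) every $s_k\colon\mathcal M_n\to\mathcal M_{n+1}$ is then a monomorphism as well. Pulling back along $B_{i,j-1}\hookrightarrow\mathcal M_{i+j}$ gives that $s_{-1}$ is a monomorphism, as required. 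The only real obstacle is bookkeeping: correctly matching the diagonal map $d$ of $\overline{\simplexcat_{/\Delta^1}}$ with the right degeneracy index $k$ of $\mathcal M$, and checking that the fixed-structure-over-$\Delta^1$ conditions on source and target are compatible so that the square is genuinely a pullback; once that identification is pinned down, monomorphisms-are-pullback-stable does the rest.
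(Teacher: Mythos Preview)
Your proposal is correct and follows essentially the same route as the paper: both identify the new bottom section $s_{-1}\colon B_{i,j-1}\to B_{i,j}$ as induced from a genuine degeneracy of $\mathcal{M}$ via the fibre-sequence description of $B_{i,j}$, invoke completeness of $\mathcal{M}$ to know all its degeneracies are monomorphisms, and then transfer the monomorphism property across the pullback. The paper packages the transfer step by invoking Lemma~\ref{pbksqfibresq} directly on the monomorphism square, whereas you phrase it as ``monomorphisms are stable under pullback''; these are the same argument, and your version is arguably more explicit about which degeneracy index is involved.
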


\begin{proof}
The new sections will be monomorphisms if the following square is a pullback:
\begin{center}
    \begin{tikzcd}
        B_{i,j-1} \arrow[r, "\id"] 
           \arrow[d, "\id"'] & B_{i,j-1} \arrow[d, "s_{-1}"]\\
        B_{i,j-1} \arrow[r, "s_{-1}"'] & B_{i,j}.
    \end{tikzcd}
\end{center}
By assumption, $\mathcal{M}$ is a complete decomposition space, hence all degeneracy maps are monomorphisms, and we can apply Lemma~\ref{pbksqfibresq}, to obtain the desired pullbacks.
\end{proof}


Instantiating the general definitions from Section~\ref{sub:phifunctors},
the zeta functor 
\[\zeta: \infGrpd_{/B_{0,0}} \to \infGrpd\]
is given by the span
\[B_{0,0} \xleftarrow{=} B_{0,0} \xrightarrow{} 1,\]
and the functor
\[\delta^{R}: \infGrpd_{/B_{0,0}} \to \infGrpd\]
is defined by the span
\[B_{0,0} \xleftarrow{s_{-1}} B_{0,-1} \xrightarrow{} 1.\]

The right comodule configuration being complete, we get a Möbius inversion formula (Theorem~\ref{thm:Mobinversion}):
    \[\zeta \star_r \Phi_{\text{even}}^Y \eq \delta^{R} + \zeta \star_r \Phi_{\text{odd}}^Y,\]
where $Y:= B_{-1,\bullet}$.




Similarly, given a cartesian fibration $p : \mathcal{M} \to \Delta^1$ between decomposition spaces, we obtain a left pointed comodule configuration.

\begin{lem}\label{runexleftcomplete}
     Given a cartesian fibration $p: \mathcal{M} \to \Delta^1$ between decomposition spaces, suppose moreover that $\mathcal{M}$ is complete. Then the left pointed comodule configuration is complete.
\end{lem}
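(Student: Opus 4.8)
The plan is to run the argument of Lemma~\ref{runexcomplete} verbatim in its mirror form, interchanging the roles of top and bottom throughout (upper décalage for lower, last degeneracies for first, new top sections for new bottom sections). Recall that a cartesian fibration $p\colon \mathcal{M}\to\Delta^1$ between decomposition spaces produces, via the cartesian nerve and the diagonal maps of the top-flavoured analogue of $\overline{\simplexcat_{/\Delta^1}}$, a left pointed comodule configuration; its new top sections $t_{-1}\colon B_{i-1,j}\to B_{i,j}$ are obtained by postcomposition with a diagonal inclusion, exactly dually to the new bottom sections $s_{-1}\colon B_{i,j-1}\to B_{i,j}$ of the cocartesian case. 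Completeness of the left pointed comodule configuration means precisely that these $t_{-1}$ are monomorphisms.

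First I would reduce, as in the proof of Lemma~\ref{runexcomplete}, to showing that the square
\begin{center}
\begin{tikzcd}
B_{i-1,j}\arrow[r,"\id"]\arrow[d,"\id"'] & B_{i-1,j}\arrow[d,"t_{-1}"]\\
B_{i-1,j}\arrow[r,"t_{-1}"'] & B_{i,j}
\end{tikzcd}
\end{center}
is a pullback, since a map whose ``diagonal'' square of this shape is cartesian has $(-1)$-truncated fibres. Then, exactly as in the proof of Proposition~\ref{runex}, I would write each $B_{\bullet,\bullet}$ as the fibre of a mapping-space restriction map over a point of $\Map(\Delta^{\bullet},\Delta^1)$, so that the square above sits over the corresponding square of mapping spaces $\Map(\Delta^{\bullet},\mathcal{M})$ assembled from a last-degeneracy map of $\mathcal{M}$. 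Since $\mathcal{M}$ is assumed complete, every degeneracy $s_i\colon\mathcal{M}_{n-1}\to\mathcal{M}_n$ is a monomorphism, so the relevant degeneracy square of $\mathcal{M}$ is a pullback; hence the outer mapping-space square is a pullback, and its side maps down to $\Map(\Delta^0,\Delta^1)$ are equivalences onto a point. Lemma~\ref{pbksqfibresq} then yields that the back square of the $B$'s is a pullback, which is exactly what is required.

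I do not expect a genuine obstacle: the argument is purely dual to Lemma~\ref{runexcomplete}, transported along the evident top/bottom symmetry. The only care needed is bookkeeping --- identifying $t_{-1}$ with the correct composite of a diagonal inclusion in the top-flavoured indexing category, and checking that it lies over the intended last-degeneracy map of $\mathcal{M}$ inside $\Map(\Delta^n,\mathcal{M})$, so that the fibre sequences feeding Lemma~\ref{pbksqfibresq} are the right ones. This is the same verification carried out in the cocartesian case, and poses no real difficulty.
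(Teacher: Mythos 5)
Your proposal is correct and matches the paper's intent exactly: the paper omits the proof of this lemma precisely because it is the verbatim dual of Lemma~\ref{runexcomplete}, reducing monomorphy of the new top sections to a pullback square with identity legs and transferring the corresponding degeneracy pullback of the complete decomposition space $\mathcal{M}$ through the mapping-space fibre sequences via Lemma~\ref{pbksqfibresq}. The only difference is notational (the paper writes the new top sections as $t_{\top+1}$ rather than $t_{-1}$), which does not affect the argument.
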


The functor
\[\delta^{L}: \infGrpd_{/B_{0,0}} \to \infGrpd\]
is given by the span
\[
    B_{0,0} \xleftarrow{} B_{-1,0} \xrightarrow{} 1.
\]
This leads to the Möbius inversion formula
\[
    \Phi_{\text{even}}^X \star_l \zeta  \eq \delta^{L} +  \Phi_{\text{odd}}^X \star_l \zeta.
\]











Given an adjunction between decomposition spaces, that is a simplicial map $\mathcal{M} \to \Delta^1$ which is both cartesian and cocartesian, and suppose that $\mathcal{M}$ is complete, then we just obtained two Möbius inversion formulas.

\begin{thm}\label{runexRota}
     Given an adjunction of decomposition spaces in the form of a bicartesian  fibration $p: \mathcal{M} \to \Delta^1$, suppose moreover that $\mathcal{M}$ is a Möbius decomposition space. Then the bicomodule configuration extracted from this data is Möbius. In particular, we have the Rota formula for the adjunction $p$:
\[
    |\mu^{X}| \star_l |\delta^R| = |\delta^{L}| \star_r |\mu^Y|.
\]
\end{thm}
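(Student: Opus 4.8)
The plan is to reduce the statement to Theorem~\ref{rotaformulabicomod}: it suffices to check that the bicomodule configuration $B$ associated to $p$ is a \emph{Möbius} bicomodule configuration, i.e.\ a bicomodule configuration with two pointings, both of whose pointed comodule configurations are Möbius (complete, with Möbius base decomposition space, and with locally finite, locally finite length augmented comodule). Everything \emph{except} the finiteness conditions is already in place: by Proposition~\ref{runex} (applied to $p$ viewed as a correspondence) together with the (co)cartesian hypotheses, $B$ is a bicomodule configuration carrying the two pointings constructed in Section~\ref{sub:runexMob}, with associated right pointed comodule configuration $C := B_{0,\bullet} \to Y$ (new sections $s_{-1}$) and left pointed comodule configuration $D := B_{\bullet,0} \to X$ (new sections $t_{\top+1}$), where $X \eq \mathcal{M}_{\{0\}}$ and $Y \eq \mathcal{M}_{\{1\}}$ are decomposition spaces; and since a Möbius decomposition space is in particular complete, Lemmas~\ref{runexcomplete} and~\ref{runexleftcomplete} give that $C \to Y$ and $D \to X$ are complete. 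So the whole content is the transfer of the finiteness conditions from $\mathcal{M}$.

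For the base decomposition spaces, I would use the fibre sequences from the proof of Proposition~\ref{runex}: each $X_i$ (resp.\ $Y_j$) is the fibre of $\mathcal{M}_i \to \Map(\Delta^i,\Delta^1)$ (resp.\ $\mathcal{M}_j \to \Map(\Delta^j,\Delta^1)$) over the constant map, hence a union of connected components of $\mathcal{M}_i$ (resp.\ $\mathcal{M}_j$) on which the simplicial structure, and in particular the degeneracies, restrict from $\mathcal{M}$. Consequently $X_1, Y_1$ are locally finite and $s_0, d_1$ are finite (restrictions of the corresponding finite maps of $\mathcal{M}$), and $\ora{X}_n = X_n \cap \ora{\mathcal{M}}_n$ with $d_1^{(n)}: \ora{X}_n \to X_1$ the restriction of $d_1^{(n)}: \ora{\mathcal{M}}_n \to \mathcal{M}_1$; so the fibres controlling locally finite length coincide with those in $\mathcal{M}$ and are eventually empty. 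Thus $X$ and $Y$ are Möbius.

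The main obstacle, as I see it, is the analogous transfer for the augmented comodules, which requires some bookkeeping with the relative nerve. Via the fibre sequence of Proposition~\ref{runex}, $C_n = B_{0,n}$ is a union of connected components of $\mathcal{M}_{n+1}$ — namely the $(n{+}1)$-simplices whose first principal edge is ``mixed'' (its two endpoints lie in the two different fibres of $p$ over $\{0\},\{1\}$) and whose remaining $n$ principal edges lie in $Y$; and the face map $d_0: C_1 \to C_0$ and the new section $s_{-1}$ are restrictions to these components of face/degeneracy maps of $\mathcal{M}$ (an inner $d_1$, and, being a monomorphism by completeness, a finite map), so $C_0$ is locally finite and $s_{-1}, d_0$ are finite. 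For locally finite length I would observe that a simplex in $\ora{C}_n = C_{ba\cdots a}$ has \emph{all} of its $n{+}1$ principal edges nondegenerate in $\mathcal{M}$: the mixed one automatically, the others because nondegeneracy in $Y$ and in $\mathcal{M}$ agree on components. Hence $\ora{C}_n \subseteq \ora{\mathcal{M}}_{n+1}$, and under this inclusion the iterated face map $d_0^{(n)}: \ora{C}_n \to C_0$ is the restriction of the iterated long-edge face map $\ora{\mathcal{M}}_{n+1} \to \mathcal{M}_1$; so the fibre of $d_0^{(n)}$ over $a \in C_0$ sits inside the fibre of the long-edge map over $a$ in $\mathcal{M}$, which is empty for $n$ large since $\mathcal{M}$ is of locally finite length. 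Therefore $C \to Y$ is a right Möbius comodule configuration, and the mirror argument (upper décalage, top face maps $d_\top$, sections $t_{\top+1}$) shows $D \to X$ is a left Möbius comodule configuration.

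Putting these together, $B$ is a Möbius bicomodule configuration, so Theorem~\ref{rotaformulabicomod} applies and yields $|\mu^{X}| \star_l |\delta^R| = |\delta^{L}| \star_r |\mu^Y|$, with $\delta^R, \delta^L$ given by the spans $B_{0,0} \leftarrow X_0 \to 1$ and $B_{0,0} \leftarrow Y_0 \to 1$ via the identifications $C_{-1} = B_{0,-1} = X_0$ and $D_{-1} = B_{-1,0} = Y_0$. (The concluding remark, that this specialises to the elementary Rota formula of the introduction for an ordinary $1$-categorical adjunction, then follows by unwinding the relative nerve as in Example~\ref{exFG}.)
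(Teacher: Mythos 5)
Your proposal is correct and follows essentially the same route as the paper: reduce to Theorem~\ref{rotaformulabicomod} via Lemmas~\ref{runexcomplete} and~\ref{runexleftcomplete}, and transfer the finiteness conditions from $\mathcal{M}$ using the fact that the relevant face maps of $B$ are induced (through the fibre sequences of Proposition~\ref{runex}, equivalently your unions-of-components description, since $\Map(\Delta^k,\Delta^1)$ is discrete) from inner face maps of $\mathcal{M}$, which are finite with eventually empty iterated fibres because $\mathcal{M}$ is Möbius. You in fact spell out a few points the paper's proof leaves implicit (that $X$ and $Y$ are themselves Möbius, that $s_{-1}$ is finite because it is a monomorphism, and the inclusion $\ora{C}_n\subseteq\ora{\mathcal{M}}_{n+1}$), all of which are correct.
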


\begin{proof}
\revised{
First observe that $B_{0,0}$, and in fact all $B_{i,j}$, are locally finite since they are given by pullback (see page~\pageref{Bij}) of locally finite spaces.
Second, note that $e_\top \colon B_{i+1,j} \to B_{i,j}$ is induced in the same way from the face map $d_i \colon \mathcal{M}_{i+2+j} \to \mathcal{M}_{i+1+j}$, which is an inner face map, and is therefore finite since $\mathcal{M}$ is Möbius.
Similarly $d_0 \colon B_{i,j+1} \to B_{i,j}$ is obtained from $d_{i+1} \colon \mathcal{M}_{i+2+j} \to \mathcal{M}_{i+1+j}$ which is also an inner face map.
Finally the fibres of $e_\top^{(n)}$ are empty for $n$ sufficiently large because the fibres of $d_{i-n+1} \circ \dots \circ d_i \colon \mathcal{M}_{i+2+j} \to \mathcal{M}_{i+2-n+j}$ are empty for $n$ sufficiently large since $\mathcal{M}$ is Möbius. Similarly, the fibres of $d_0^{(n)}$ are empty for $n$ sufficiently large.
}
\end{proof}

\revised{
\subsection{The Möbius function of the incidence algebra of the decomposition space of finite posets}

We come back to the bicomodule configuration $\ds{B}$ of layered sets and posets given in Section~\ref{sec:layeredsetsposets}.

\begin{prop}[{\cite[Proposition~3.3]{Ca:mdrs}}]
    The bicomodule configuration $\ds{B}$ is Möbius.
\end{prop}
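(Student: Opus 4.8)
The plan is to unwind the definition of a \emph{Möbius bicomodule configuration}: on top of the fact that $\ds{B}$ is a bicomodule configuration (recalled in Section~\ref{sec:layeredsetsposets}) and that $X := \ds{B}_{\bullet,-1} = \ds{I}$ and $Y := \ds{B}_{-1,\bullet} = \ds{C}$ are Möbius decomposition spaces (they are complete, locally finite, locally discrete and of locally finite length by the propositions of Section~\ref{sec:preliminaries}), one must exhibit the two pointings and check that each of the resulting pointed comodule configurations is Möbius in the sense of Section~\ref{sub:Mobbicomod}. As pointings I would take: in the horizontal direction the extra bottom degeneracies $s_{-1}$ inserting a new empty first poset-layer, and in the vertical direction the extra top degeneracies $t_{\top+1}$ inserting a new empty last set-layer; since $e_\top$ merges the last set-layer into the first poset-layer, $t_{\top+1}$ is indeed a section of $e_\top$, and dually $s_{-1}$ is a section of the relevant $d$.

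For the \emph{right} comodule configuration I would exploit the observation from Section~\ref{sec:layeredsetsposets} that the row $\ds{B}_{0,\bullet}$ is the lower décalage $\Dec_\bot(\ds{C})$ with $v$ the décalage map; the extra bottom degeneracies $s_{-1}$ are then the original $s_0$ of $\ds{C}$, so this is the standard right pointing carried by any lower décalage. Completeness is immediate, since $\ds{C}$ complete implies all degeneracies of $\ds{C}$ are monomorphisms (as used in the proof of Lemma~\ref{runexcomplete}); and the Möbius conditions transfer directly from $\ds{C}$: $\ds{B}_{0,0} = \ds{C}_1$ is locally finite, the section $s_{-1}$ and the comodule face $d_0\colon \ds{B}_{0,1}\to\ds{B}_{0,0}$ are (up to the décalage reindexing) the finite maps $s_0$ and $d_1$ of $\ds{C}$, and the iterated contraction of $\ora{(\ds{B}_{0,\bullet})}_n$ onto $\ds{B}_{0,0}$ carries the locally-finite-length data of $\ds{C}$ (a layer of a finite poset can be deleted at most $|P|$ times). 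Hence the right comodule configuration is Möbius.

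The \emph{left} comodule configuration $\ds{B}_{\bullet,0} \to \ds{I}$ (via $u$) is the genuinely new case, and this is where I expect the main obstacle: in contrast with the right one, $\ds{B}_{\bullet,0}$ is not a décalage (its top vertical face merges the last set-layer into the poset), so each clause must be checked by hand. Completeness: the homotopy fibre of $t_{\top+1}$ over a configuration is a point when the last set-layer is empty and is empty otherwise, so $t_{\top+1}$ is a monomorphism. Local finiteness: $\ds{B}_{0,0}$ is the locally discrete groupoid of finite posets, hence locally finite; $t_{\top+1}$ is finite, being a monomorphism; and the homotopy fibre of $e_\top$ over a configuration with poset part $P'$ is, up to automorphisms, the set of subsets of $\min(P')$, hence finite. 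Local finiteness of length: $e_\top^{(n)}$ merges all $n$ set-layers into the poset, so an element of $\ora{(\ds{B}_{\bullet,0})}_n$ lying over a fixed $P_0\in\ds{B}_{0,0}$ has underlying poset of cardinality $|P_0|$ carrying $n$ nonempty set-layers, forcing $n\le |P_0|$; thus the fibres of $e_\top^{(n)}$ are empty for $n$ large. So the left comodule configuration is Möbius as well.

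Putting the last two paragraphs together, $\ds{B}$ is a bicomodule configuration with two pointings both of whose associated pointed comodule configurations are Möbius, hence a Möbius bicomodule configuration. The only delicate points in a fully detailed write-up are the exact poset-theoretic identification of the fibres of $e_\top$ and of the groupoids $\ora{(\ds{B}_{\bullet,0})}_n$ — that is, precisely which discrete down-closed pieces the layering constraints defining $\ds{B}$ permit to be split off — but in each case only an elementary finiteness bound is needed rather than an exact count, so no essential difficulty arises beyond bookkeeping; the complete details are in \cite{Ca:mdrs}.
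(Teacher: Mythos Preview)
The paper does not prove this proposition itself; it is quoted from \cite{Ca:mdrs}, so there is no in-paper argument to compare your attempt against. Your outline is sound and follows the natural strategy: the right comodule configuration is handled by recognising $\ds{B}_{0,\bullet}$ as $\Dec_\bot(\ds{C})$, so that the pointing, completeness, and Möbius conditions are inherited directly from those of $\ds{C}$ (via the décalage reindexing you describe); the left comodule configuration $\ds{B}_{\bullet,0}\to\ds{I}$ is then verified clause by clause using only elementary finiteness of finite posets. You have correctly isolated the one genuinely delicate point---the precise combinatorial description of the fibres of $e_\top$ and of $\ora{(\ds{B}_{\bullet,0})}_n$, which depends on exactly how the merging of a set-layer into the poset is specified in \cite{Ca:mdrs} (the brief description here leaves some ambiguity about what structure the pair $(S,P)$ carries)---and you rightly observe that only a finiteness bound, not an exact identification, is required for the Möbius conditions. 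That caveat aside, nothing is missing from your sketch.
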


\begin{rmk}
This Möbius bicomodule configuration does not come from an adjunction of decomposition spaces.
\end{rmk}

We can now apply the generalised Rota formula of Theorem~\ref{rotaformulabicomod} to compute the Möbius function of the bialgebra of finite posets. The result is known \cite{ABS}, but its derivation from the generalised Rota formula is new and interesting.

\begin{thm}[{\cite[Theorem~3.4]{Ca:mdrs}}]\label{formula}
    The Möbius function of the incidence algebra of the decomposition space $\ds{C}$ of finite posets is 
    \[
    \mu(P) = 
    \begin{cases}
       (-1)^n &\text{ if $P \in \ds{C}_1$ is a discrete poset with $n$ elements,}\\
         0    &\text{ else.}
    \end{cases}
    \]
\end{thm}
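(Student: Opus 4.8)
The plan is to obtain this formula as a direct application of the generalised Rota formula (Theorem~\ref{rotaformulabicomod}) to the Möbius bicomodule configuration $\ds{B}$ of layered sets and posets from Section~\ref{sec:layeredsetsposets}. Here $X := \ds{B}_{\bullet,-1} = \ds{I}$ and $Y := \ds{B}_{-1,\bullet} = \ds{C}$, and $\ds{B}_{0,0}$ is the groupoid of finite posets, so $\Q_{\pi_0\ds{B}_{0,0}}$ has the (iso classes of) finite posets as a basis. Granting that $\ds{B}$ is a Möbius bicomodule configuration (the cited Proposition, proved in \cite{Ca:mdrs}), Theorem~\ref{rotaformulabicomod} yields the identity of linear functionals
\[
    |\mu^{\ds{I}}| \star_l |\delta^R| = |\delta^{L}| \star_r |\mu^{\ds{C}}| .
\]
I would then prove the theorem by evaluating both sides on a basis element $[P]$, for $P$ an arbitrary finite poset, after making the two unit functionals and the two coactions explicit from the description of $\ds{B}$.

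For the right-hand side, the right coaction $\gamma_r$ is induced by the span $\ds{B}_{0,0} \xleftarrow{d_0} \ds{B}_{0,1} \xrightarrow{(d_1,v)} \ds{B}_{0,0}\times\ds{C}_1$; since $\ds{B}_{0,1}$ is the groupoid of $2$-layered finite posets with $d_0$ joining the two layers, $d_1$ returning the bottom layer, and $v$ returning the top layer (now a $1$-layered poset), $\gamma_r[P]$ ranges over the $2$-layerings of $P$ and records (bottom layer, top layer). The functional $\delta^{L}$ is given by the span $\ds{B}_{0,0} \leftarrow \ds{C}_0 \to 1$, and since $\ds{C}_0$ is the singleton groupoid on the empty poset, one identifies $\delta^{L}$ with the indicator of the empty poset in $\ds{B}_{0,0}$. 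Hence $(|\delta^{L}| \star_r |\mu^{\ds{C}}|)([P])$ selects the unique $2$-layering of $P$ with empty bottom layer — the one placing all of $P$ in the top layer — and evaluates $|\mu^{\ds{C}}|$ on $P$ regarded in $\ds{C}_1$. So the right-hand side equals $\mu(P)$, exactly the quantity we want to compute.

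For the left-hand side, the left coaction $\gamma_l$ is induced by $\ds{B}_{0,0} \xleftarrow{e_1} \ds{B}_{1,0} \xrightarrow{(u,e_0)} \ds{I}_1\times\ds{B}_{0,0}$, where $\ds{B}_{1,0}$ consists of pairs $(S,P')$ with $S$ a finite set and $P'$ a finite poset, $u$ returns $S$, $e_0$ returns $P'$, and $e_1$ returns the poset obtained by fusing the set layer $S$ with the first layer of $P'$. As before, $\delta^{R}$ is the indicator of the empty poset, so $(|\mu^{\ds{I}}| \star_l |\delta^R|)([P])$ only registers pairs $(S,P')$ with $P' = \emptyset$; for such a pair $e_1(S,\emptyset)$ is simply the discrete poset on $S$. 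Using that $|\mu^{\ds{I}}|([S]) = (-1)^{|S|}$ (the Möbius function of the binomial coalgebra, recalled in Section~\ref{sec:layeredsetsposets}), the left-hand side evaluated at $[P]$ is $0$ unless $P$ is discrete, in which case $S$ is forced to be the $n$-element underlying set of $P$ and the value is $(-1)^n$. Equating the two sides gives the claimed formula.

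The main obstacle is not this unwinding, which is short, but the verification — which I would cite rather than reprove — that $\ds{B}$ genuinely meets all the hypotheses of a Möbius bicomodule configuration: stability, culf augmentations, completeness of both the horizontal (bottom) and vertical (top) pointings, and the finiteness conditions making both the comodule over $\ds{I}$ and the comodule over $\ds{C}$ Möbius. This is precisely where the combinatorics of joining and deleting layers of sets and posets must be handled carefully, and it is carried out in \cite{Ca:mdrs}. The only other delicate point in the present argument is the concrete identification of $\delta^R$, $\delta^L$, $\gamma_l$, and $\gamma_r$ with the elementary layer operations above, which follows immediately from the explicit description of $\ds{B}$.
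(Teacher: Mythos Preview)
Your proposal is correct and follows exactly the approach the paper indicates: the paper does not give a detailed proof of this theorem but cites \cite{Ca:mdrs} and explicitly states that it is obtained by applying the generalised Rota formula (Theorem~\ref{rotaformulabicomod}) to the Möbius bicomodule configuration $\ds{B}$ of layered sets and posets. Your unwinding of $\gamma_l$, $\gamma_r$, $\delta^L$, $\delta^R$ from the explicit layer descriptions of $\ds{B}$ is precisely the computation that the cited paper carries out.
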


This result can be extended to the incidence algebra of any directed restriction species, including rooted forests, and free operads~\cite{Ca:mdrs}.
 }


\newpage

\address
\end{document}